\theoremstyle{plain}
\newtheorem{theorem}{Theorem}
\newtheorem{proposition}[theorem]{Proposition}
\newtheorem{lemma}[theorem]{Lemma}
\newtheorem{conjecture}[theorem]{Conjecture}
\theoremstyle{definition}
\theoremstyle{remark}
\begin{document}

\title{From heavy-tailed Boolean models to scale-free Gilbert graphs}
\def\MLine#1{\par\hspace*{-\leftmargin}\parbox{\textwidth}{\[#1\]}}

\def\A{\mathbb{A}}
\def\Ab{\mathcal{A}b}
\def\absq{{a^{\prime}}^2+{b^\prime}^2}
\def\AP{\text{G}}
\def\app{{a^{\prime\prime}}^2+1}
\def\argmin{\text{argmin}}
\def\arb{arbitrary }
\def\ass{assumption}
\def\arrow{\rightarrow}
\def\BT{B^{\mathbb{T}_n}}
\def\codim{\text{codim}}
\def\const{c}
\def\CCG{\text{G}}
\def\colim{\text{colim}}
\def\cond{condition }
\def\C{\mbox{\bf C}}
\def\ct{\mathsf{ct}}
\def\d{{\rm d}}
\def\dell{\partial}
\def\diam{\text{diam}}
\def\E{\mathbb{E}}
\def\envi{\mathsf{env}}
\def\enviIn{\partial^{\mathsf{in}}}
\def\enviOut{\partial^{\mathsf{out}}}
\def\enviInn{\partial^{\mathsf{in},*}}
\def\enviStab{\mathsf{env}_{\mathsf{stab}}}
\def\Et{\text{Et}}
\def\es{\emptyset}
\def\exp{\text{exp}}
\def\fa{for all }
\def\Fk{\mathcal{F}_{k_0}}
\def\Fm{Furthermore}
\def\G{\mathbb{G}}
\def\gr{\text{gr}}
\def\hge{h_{\mathsf{g}}}
\def\hco{h_{\mathsf{c}}}
\def\H{\text{H}}
\def\Hom{\text{Hom}}
\def\Hs{\widetilde{X}_{H,0}}
\def\inj{\hookrightarrow}
\def\id{\text{id}}
\def\iiets{it is easy to see }
\def\iietc{it is easy to check }
\def\Iietc{It is easy to check }
\def\Iiets{It is easy to see }
\def\imp{\Rightarrow}
\def\({\big(}
\def\){\big)}
\def\lver{\big|}
\def\rver{\big|}
\def\lcu{\big\{}
\def\rcu{\big\}}
\def\im{\mbox{im}}
\def\inn{\mathsf{in}}
\def\Inv{\text{Inv}}
\def\Ind{\text{Ind}}
\def\Ip{In particular}
\def\ip{in particular }
\def\LB{\text{LB}}
\def\Lo{\mathcal{L}^o}
\def\mc{\mathcal}
\def\mb{\mathbb}
\def\mf{\mathbf}
\def\Hp{\wt{X}_{H,0}^{'}}
\def\M{\mathbb{M}}
\def\Mo{Moreover}
\def\G{\mathbb{G}}
\def\GR{\mathsf{GR}}
\def\N{\mathbb{N}}
\def\Npo{\mathbf{N}_{\mathcal{P}^o}}
\def\k{\overline{k}}
\def\K{\underline{K}}
\def\LE{\mathsf{LE}}
\def\ldot{.}
\def\lmid{\;\middle\vert\;}
\def\O{\mathcal{O}}
\def\Ob{Observe }
\def\ob{observe }
\def\out{\mathsf{out}}
\def\Otoh{On the other hand}
\def\opartial{\partial^{\text{out}}}
\def\ipartial{\partial^{\text{in}}}
\def\eopartial{\partial^{\text{out}}_{\text{ext}}}
\def\eipartial{\partial^{\text{in}}_{\text{ext}}}
\def\p{\prime}
\def\pred{\mathsf{pred}}
\def\Trace{\mathsf{Trace}}
\def\pp{{\prime\prime}}
\def\Po{\mathcal{P}^0}
\def\P{\mathbb{P}}
\def\Proj{\mbox{\bf P}}
\def\Q{\mathbb{Q}}
\def\QQ{\overline{\Q}}
\def\pr{\text{pr}}
\def\R{\mathbb{R}}
\def\rstab{R_{\mathsf{stab}}}
\def\Spec{\text{Spec}}
\def\st{such that }
\def\sl{sufficiently large }
\def\ss{sufficiently small }
\def\sot{so that }
\def\su{suppose }
\def\succ{\mathsf{succ}}
\def\Su{Suppose }
\def\suf{sufficiently }
\def\udot{\mathaccent\cdot\cup}
\def\Set{\mathcal{S}et}
\def\T{\mathbb{T}}
\def\Twh{Then we have }
\def\Tes{There exists }
\def\te{there exist }
\def\tes{there exists }
\def\tptc{this proves the claim}
\def\Map{\text{Map}}
\def\VLo{\mc{VL}^o}
\def\wt{\widetilde}
\def\Wcon{We conclude }
\def\wcon{we conclude }
\def\wc{we compute }
\def\Wc{We compute }
\def\wo{we obtain }
\def\wh{we have }
\def\Wh{We have }
\def\Z{\mathbb{Z}}
\def\ZSlab{\mathbb{Z}^2_L\times\{0\}^{d-2}}

\author{Christian Hirsch}
\thanks{Weierstrass Institute Berlin, Mohrenstr. 39, 10117 Berlin, Germany; E-mail: {\tt hirsch@wias-berlin.de}.}

\begin{abstract}
Define the scale-free Gilbert graph based on a Boolean model with heavy-tailed radius distribution on the $d$-dimensional torus by connecting two centers of balls by an edge if at least one of the balls contains the center of the other. We investigate two asymptotic properties of this graph as the size of the torus tends to infinity. First, we determine the tail index associated with the asymptotic distribution of the sum of all power-weighted incoming and outgoing edge lengths at a randomly chosen vertex. Second, we study the behavior of chemical distances on scale-free Gilbert graphs and show the existence of different regimes depending on the tail index of the radius distribution. Despite some similarities to long-range percolation and ultra-small scale-free geometric networks, scale-free Gilbert graphs are actually more closely related to fractal percolation and this connection gives rise to different scaling limits. We also propose a modification of the graph, where the total number of edges can be reduced substantially at the cost of introducing a logarithmic factor in the chemical distances.
\end{abstract}
\keywords{scale-free network,
Boolean model,
random geometric graph,
first-passage percolation, 
chemical distances.
}
\subjclass[2010]{Primary 60D05; Secondary 60K35}

\maketitle

\section{Introduction}
\label{intSec}
The spatial distribution of the population in a country is typically far from homogeneous, but rather exhibits fractal patterns. Specifically, this has been investigated for Great Britain and the United States~\cite{aby1,aby2} and for Finland~\cite{koski}. It is pointed out in~\cite{aby1,koski} that fractality has important implications for the design of wired telecommunication networks in the sense that the number and extent of various levels of hierarchy should be adapted to the fractal geometry. A trade-off is involved in determining the optimal number of levels. Using few levels has the advantage that most access points can be connected by a small number of hops. However, this comes at the cost of having to install a cable network of large total length. Indeed, a high-level node has to be connected to a large number of low-level nodes in order to guarantee connectivity. Vice versa, using a large number of hierarchies, one may be able to reduce the length of cables substantially, but this comes at the cost of increasing the number of hops it takes a low-level node to reach the topmost layer of hierarchy. A related cost analysis for hierarchical spatial networks is provided by Baccelli and Zuyev~\cite{baccelliZuyev}. However, the network structure investigated in that paper is obtained by iteratively considering Voronoi tessellations and does not exhibit fractal geometries (see however~\cite{aggTess} for some results in this direction).

In order to develop a fundamental understanding of the asymptotic behavior of cable lengths and chemical distances (i.e., minimal number of hops needed to connect two points) in large networks, it is important to abstract from the types of specific countries and deterministic fractals investigated in~\cite{aby1,koski} and move to random networks. The spatial nature of the problem calls for models based on random geometric graphs, and the fractal geometry suggests that one should look for scale-free random graphs exhibiting a power-law degree distribution. The combination of these two constraints restricts the list of appropriate choices substantially. We briefly review some of the most well-known models in literature and discuss their drawbacks with respect to modeling the kind of networks we have in mind. 

One option could be to use long-range percolation, see~\cite{lrp1,lrp2,copper} and the references therein. Here, one starts from the lattice $\Z^d$ and connects any pair of sites independently with a probability depending only on their distance. This leads to a network with a giant connected component and power-law degree distributions. However, this model does not offer an inherently defined hierarchy of nodes. \Mo, if we think of a high-level node as having the purpose of providing access to all low-level nodes in some region, then this would suggest that the occurrence of edges should be spatially positively correlated. However, if $x,y\in\Z^d$ are arbitrary sites in $\Z^d$ and $y^\p$ is close to $y$, then putting an edge between $x$ and $y$ does not influence at all the probability of seeing an edge between $x$ and $y^\p$. 

A second option could be to consider the ultra-small scale-free geometric networks on $\Z^d$ introduced in~\cite{ultraSmall}. Two sites $x,y\in\Z^d$ are connected by an edge in this graph if $|x-y|\le \min\{R_x,R_y\}$, where $\{R_z\}_{z\in\Z^d}$ denotes a family of iid heavy-tailed random variables. For any site $z\in\Z^d$ the value $R_z$ can be thought of as the radius of influence of $z$, \sot ultra-small scale-free geometric networks offer a natural possibility for defining the network hierarchy. Similar remarks apply to scale-free percolation~\cite{sfPerc}. Recently, also a Poisson-based continuum analogue of this model has been investigated, see~\cite{wuth1,wuth2}.
From a modeling point of view, this means that there is an excellent degree of connectivity between high-level nodes. However, conversely, for low-level nodes it may be very difficult to get connected to a nearby high-level node. This might pose a substantial obstruction to the build-up of a hierarchical network. Due to the drawbacks of the existing networks models, we propose two alternatives.


According to our discussion, it would be desirable to consider a variant of the ultra-small scale-free geometric network, where two sites $x,y\in\Z^d$ are connected by an edge if $|x-y|\le \max\{R_x,R_y\}$ (instead of $|x-y|\le \min\{R_x,R_y\}$). In other words, in order to connect $x$ and $y$ by an edge it is no longer necessary that both $x$ lies within the radius of influence of $y$ \emph{and} $y$ lies within the radius of influence of $x$. It suffices that $x$ lies within the radius of influence of $y$ \emph{or} $y$ lies within the radius of influence of $x$. 
We consider a spatial variant of this network model, called \emph{scale-free Gilbert graph}, where the vertices are given by a homogeneous Poisson point process on the $d$-dimensional torus with side length $n$, for some $d,n\ge2$. See Figure~\ref{sfbmFig} for an illustration of this graph in dimension $d=2$.

\begin{figure}[!htpb]
\centering
\begin{subfigure}{0.45\textwidth}
\centering
 {\includegraphics[width=6.5cm]{./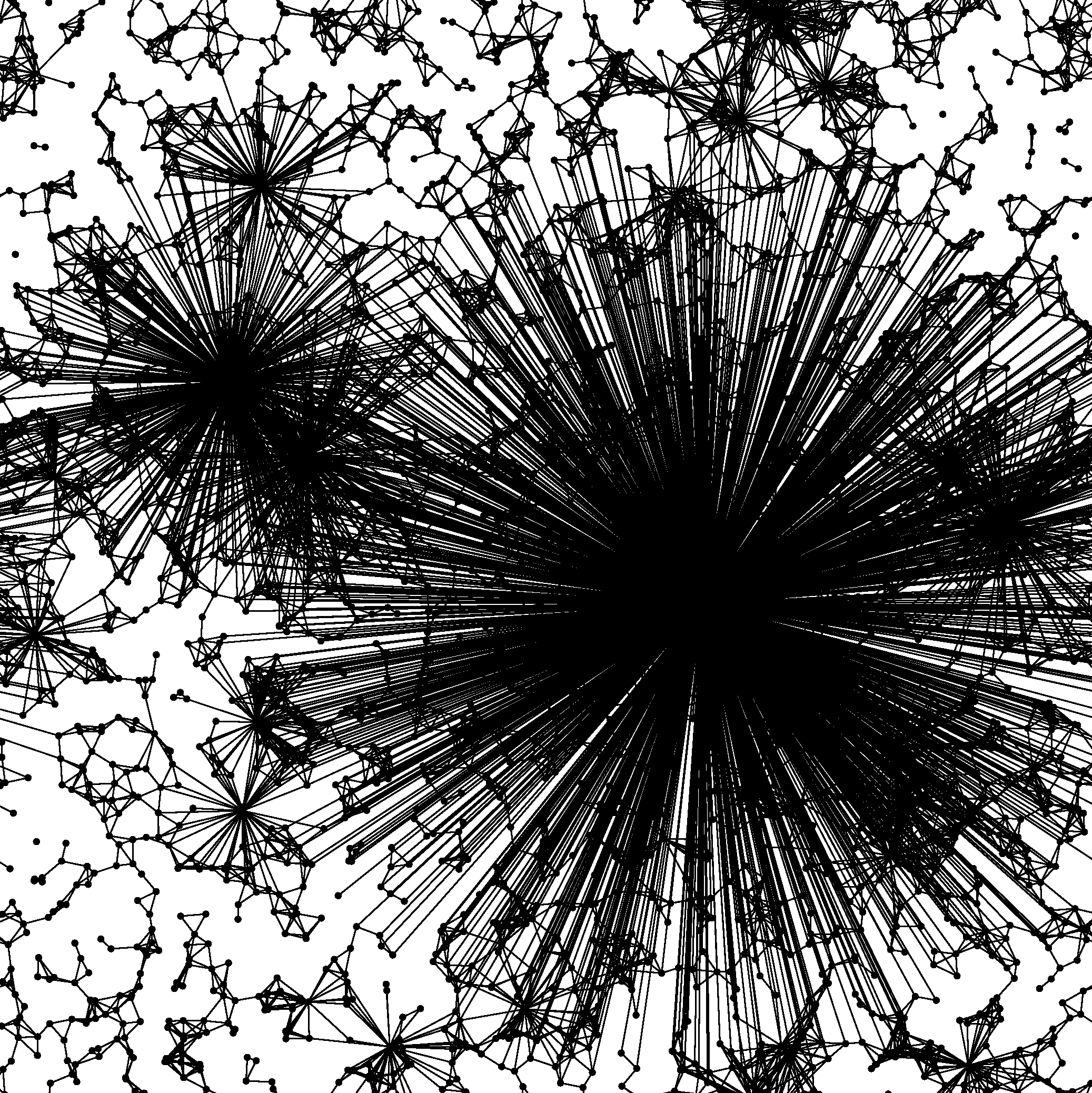}}
\caption{Scale-free Gilbert graph}\label{sfbmFig}
\end{subfigure}\qquad
        \begin{subfigure}{0.45\textwidth}
\centering
 {\includegraphics[width=6.5cm]{./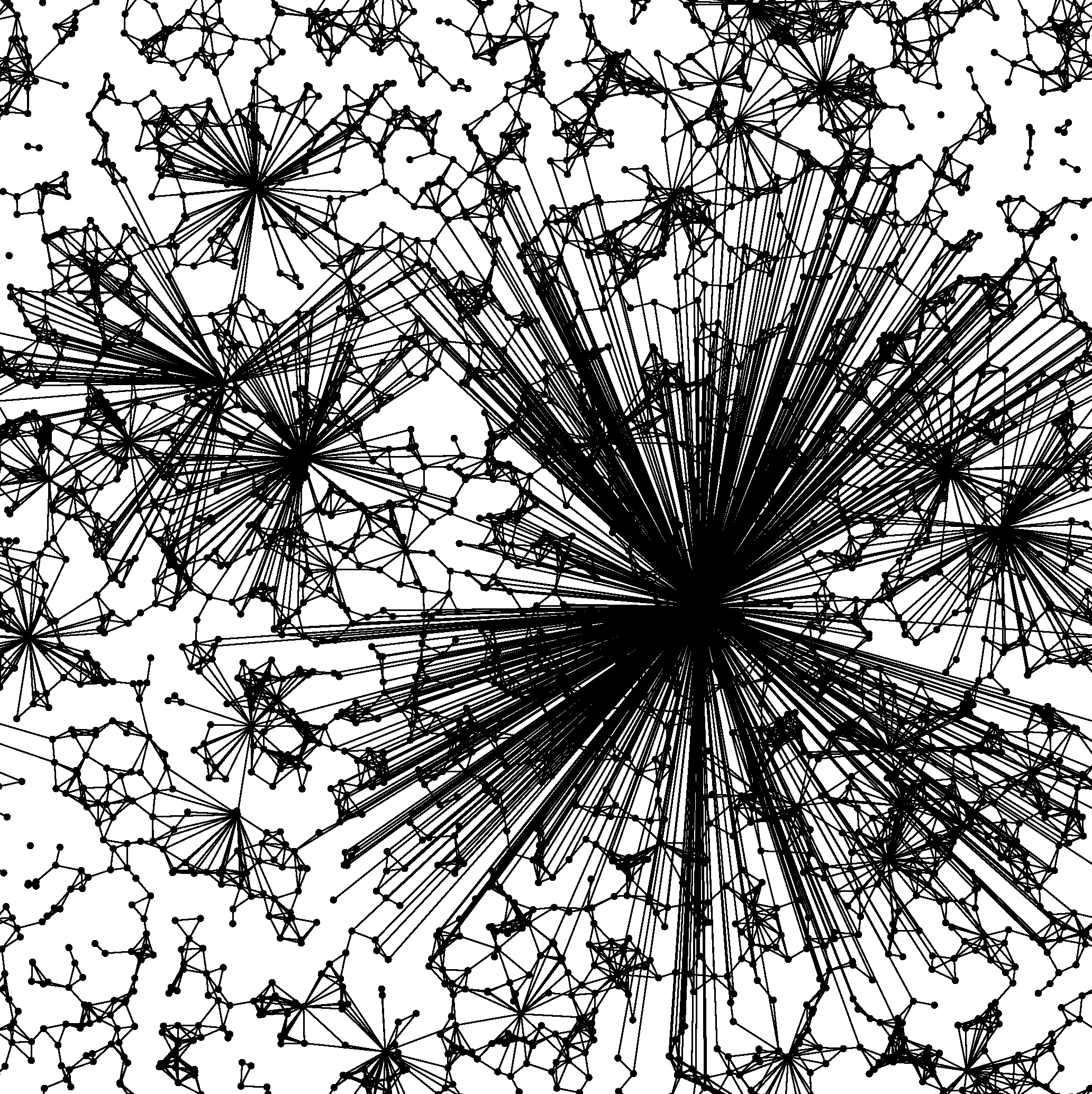}}
\caption{Thinned scale-free Gilbert graph}\label{onnFig}
\end{subfigure}
\caption{Planar scale-free and thinned scale-free Gilbert graphs}
\end{figure}

As $n\to\infty$, we investigate the asymptotic distribution of the power-weighted sum of all incoming and all outgoing edge lengths considered from a vertex that is picked uniformly at random, see Theorems~\ref{outEdgeThm} and~\ref{inEdgeThm}. In particular, our results imply that the asymptotic out-degree and in-degree distributions admit polynomial tails. We also investigate the growth of the expected power-weighted sum of all outgoing edge lengths as $n\to\infty$, see Theorem~\ref{asyGrowProp}. In Section~\ref{chemDistSec}, we show that different scaling regimes of chemical distances emerge depending on the tail index of the radius distribution.

Since the scale-free Gilbert graph exhibits a large degree of redundancy of connections, in Section~\ref{multiSec} we present a variant, called \emph{thinned scale-free Gilbert graph}, that substantially decreases these redundancies by introducing a multi-layer topology. Loosely speaking, an edge in the original graph from a low-level node $x$ to a high-level node $y$ is removed if $y$ can also be reached from $x$ via an intermediate node $z$. See Figure~\ref{onnFig} for an illustration of the thinned scale-free Gilbert graph.
A precise definition of this graph will be given in Section~\ref{defSec}, but we note already at this point that removing redundancies does not alter the family of connected components. \Fm, under suitable assumptions on the tail behavior of the radius distribution, we show in Theorem~\ref{redElThm} that the sum of power-weighted edge lengths at a randomly picked vertex decreases significantly. Of course, reducing edge lengths comes at the price of longer connection paths, but we will show in Theorem~\ref{intMedProp} that chemical distances grow at most by a logarithmic factor in the size of the torus.

The present paper is organized as follows. In Section~\ref{defSec}, we provide precise definitions of the scale-free and thinned scale-free Gilbert graphs and state our main results. Next, in Section~\ref{sfSec}, we investigate power-weighted sums of edge lengths at a typical point and prove Theorems~\ref{outEdgeThm},~\ref{inEdgeThm} and~\ref{asyGrowProp}. Section~\ref{chemDistSec} is devoted to the analysis of chemical distances for different regimes of the tail index. Finally, in Section~\ref{multiSec}, we investigate the changes in terms of the asymptotic behavior of power-weighted sums of edge lengths and of chemical distances as we move from the scale-free Gilbert graph to its thinning.

\section{Model definition and statement of main results}
\label{defSec}
In Sections~\ref{htggSec} and~\ref{thtggSec}, we provide precise definitions of the network models under consideration and present our main results. In the following, $d\ge2$ is always assumed to be an arbitrary fixed integer. \Mo, $B_r(\xi)=\{\eta\in\R^d:\,|\eta-\xi|\le r\}$ denotes the ball of radius $r>0$ centered at $\xi\in\R^d$.

\subsection{Scale-free Gilbert graph}
\label{htggSec}
Let $X^{(n)}$ be an independently $[0,\infty)$-marked homogeneous Poisson point process with intensity $1$ in the torus $\T_n$, where $\T_n$ is obtained from the cube $[-n/2,n/2]^d$ by the standard identification of its boundaries. If $n\ge1$, $r>0$ and $\xi\in\T_n$, then we write $\BT_r(\xi)=\{\eta\in\T_n:\d_{\T_n}(\xi,\eta)\le r\}$ for the closed ball in $\T_n$ with radius $r$ centered at $\xi$, where $\d_{\T_n}(\xi,\eta)$ denotes the toroidal distance between $\xi$ and $\eta$. 
The mark of a point from $X^{(n)}$ is interpreted as the radius of a ball centered at this point. Throughout the paper, we assume that the distribution of the typical mark $R$ is absolutely continuous and heavy-tailed. That is, \te $\beta,s\in(0,\infty)$ \st $\lim_{h\to\infty} h^{s}\P(R>h)=\beta$. For every $\varepsilon\in(0,1)$, we also fix $t_0=t_0(\varepsilon)>0$ \st $t^{s}\P(R>t)\in (\beta(1-\varepsilon),\beta(1+\varepsilon))$ \fa $t\ge t_0(\varepsilon)$.


For $n\ge1$ we investigate the directed random geometric graph $G_{}(X^{(n)})$ on the vertex set $X$, where an edge in $G(X^{(n)})$ is drawn from $x=(\xi,r)\in X^{(n)}$ to $y=(\eta,t)\in X^{(n)}$ if $\eta\in\BT_r(\xi)$. 

First, we investigate the asymptotic distributions of the power-weighted sum of all outgoing and all incoming edge lengths. To be more precise, fix $\alpha\ge0$, $n\ge1$ and let $R^*$ be a copy of $R$ that is independent of $X^{(n)}$. Then, 
$$D^{(\alpha)}_{\mathsf{out},n}=\sum_{(\xi,r)\in X^{(n)}}|\xi|^\alpha1_{\BT_{R^*}(o)}(\xi),$$
denotes the sum of $\alpha$th powers of the lengths of all outgoing edges at the node $(o,R^*)$, where we put $|\xi|=\d_{\T_n}(\xi,o)$. Considering the limit $n\to\infty$, i.e., letting the size of the torus tend to infinity, it is intuitive (and will be shown rigorously in Theorem~\ref{outEdgeThm} below) that the random variables $\big\{D^{(\alpha)}_{\mathsf{out},n}\big\}_{n\ge1}$ converge in distribution to the random variable 
$$D^{(\alpha)}_{\mathsf{out}}=\sum_{(\xi,r)\in X}|\xi|^\alpha1_{B_{R^*}(o)}(\xi),$$
where $X$ denotes an independently $[0,\infty)$-marked homogeneous Poisson point process in $\R^d$ with intensity $1$. In addition to showing the convergence of $D^{(\alpha)}_{\mathsf{out},n}$ to $D^{(\alpha)}_{\mathsf{out}}$, we also investigate the behavior of the tail probabilities $p^{(\alpha)}_{\mathsf{out},t}=\P\big(D^{(\alpha)}_{\mathsf{out}}>t\big)$ as $t\to\infty$. In the following, $\kappa_d$ denotes the volume of the unit ball in $\R^d$.
\begin{theorem}
\label{outEdgeThm}
For every $\alpha>0$ the random variables $\big\{D^{(\alpha)}_{\mathsf{out},n}\big\}_{n\ge1}$ converge to the random variable $D^{(\alpha)}_{\mathsf{out}}$ in distribution. Moreover, 
$$\lim_{t\to\infty}t^{s/(\alpha+d)}p^{(\alpha)}_{\out,t}=(d\kappa_d/(\alpha+d))^{s/(\alpha+d)}\beta.$$
\end{theorem}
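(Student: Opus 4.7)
My plan splits the proof into the convergence in distribution and the tail asymptotics. For the first assertion, observe that $D^{(\alpha)}_{\out,n}$ depends on $X^{(n)}$ only through its restriction to the toroidal ball $\BT_{R^*}(o)$, and whenever $R^*<n/2$ this ball is isometric to $B_{R^*}(o)$, with the restricted Poisson measure matching the corresponding one on $\R^d$. Hence $D^{(\alpha)}_{\out,n}$ and $D^{(\alpha)}_{\out}$ can be coupled to agree on $\{R^*<n/2\}$; since $\P(R^*\ge n/2)\to 0$ as $n\to\infty$, convergence in distribution follows.

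For the tail asymptotics, the Campbell--Mecke formulas applied conditionally on $R^*=r$ give
\[
\E\bigl[D^{(\alpha)}_{\out}\bigm|R^*=r\bigr]=\int_{B_r(o)}|\xi|^\alpha\,\d\xi=\frac{d\kappa_d}{\alpha+d}r^{\alpha+d}=:f(r),
\]
and $\mathrm{Var}(D^{(\alpha)}_{\out}\mid R^*=r)=\int_{B_r(o)}|\xi|^{2\alpha}\,\d\xi=d\kappa_d r^{2\alpha+d}/(2\alpha+d)=:g(r)$, with $g(r)/f(r)^2=O(r^{-d})\to 0$. Thus $D^{(\alpha)}_{\out}/f(R^*)\to 1$ in conditional probability as $R^*\to\infty$, so up to lower order corrections $\{D^{(\alpha)}_{\out}>t\}=\{R^*>f^{-1}(t)\}$; combined with $\P(R^*>h)\sim\beta h^{-s}$ and $f^{-1}(t)=(t(\alpha+d)/(d\kappa_d))^{1/(\alpha+d)}$, this already pinpoints the claimed constant $(d\kappa_d/(\alpha+d))^{s/(\alpha+d)}\beta$.

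To turn this heuristic into a proof, I set $r_\pm(t):=f^{-1}(t(1\pm\epsilon))$ for small $\epsilon>0$ and sandwich $p^{(\alpha)}_{\out,t}$ between matching bounds. For the upper bound I decompose
\[
p^{(\alpha)}_{\out,t}\le\P(R^*>r_-(t))+\P\bigl(D^{(\alpha)}_{\out}>t,\,R^*\le r_-(t)\bigr)
\]
and estimate the second summand using a concentration inequality applied to the conditional deviation $D^{(\alpha)}_{\out}-f(R^*)\ge\epsilon t$; the resulting contribution is of strictly smaller order than $t^{-s/(\alpha+d)}$. For the lower bound I restrict to $\{R^*>r_+(t)\}$, on which $f(R^*)>t(1+\epsilon)$; then $\{D^{(\alpha)}_{\out}>t\}$ contains $\{f(R^*)-D^{(\alpha)}_{\out}<\epsilon t\}$, whose conditional probability is at least $1-g(R^*)/(\epsilon t)^2=1-O((R^*)^{-d})$ by Chebyshev. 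Using $\P(R^*>r_\pm(t))\sim(1\pm\epsilon)^{-s/(\alpha+d)}(d\kappa_d/(\alpha+d))^{s/(\alpha+d)}\beta\,t^{-s/(\alpha+d)}$ and letting $\epsilon\to 0$ squeezes both bounds to the target.

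The main technical obstacle lies in the error term $\P(D^{(\alpha)}_{\out}>t,\,R^*\le r_-(t))$. A plain Chebyshev inequality based on $g$ handles the regime $s<2(\alpha+d)$, yielding an error of order $t^{-(s+d)/(\alpha+d)}$ once the resulting bound is integrated against the density of $R^*$; for $s\ge 2(\alpha+d)$ the variance proxy is insufficient, and one must invoke Bernstein-type exponential concentration for Poisson integrals of bounded functions, which delivers a super-polynomial bound of the form $\exp(-c\,t^{d/(\alpha+d)})$. Both variants rely on the quantitative tail control $t^s\P(R>t)\in(\beta(1-\varepsilon),\beta(1+\varepsilon))$ for $t\ge t_0(\varepsilon)$, and once the scales $r_\pm(t)$ have been chosen correctly the remaining calculations reduce to routine estimates of truncated moments of $R$.
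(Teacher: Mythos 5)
Your plan follows the same strategy as the paper: condition on $R^*$, exploit concentration of $D_r$ around its conditional mean $f(r)=d\kappa_d r^{\alpha+d}/(\alpha+d)$, and squeeze with thresholds $r_\pm(t)$. The upper-bound treatment is sound, and your observation that plain Chebyshev suffices only for $s<2(\alpha+d)$ is correct; the paper simply runs Poisson concentration and classical large deviations uniformly in $r\le v(\varepsilon)t^{1/(\alpha+d)}$, obtaining a bound decaying like $\exp(-ct^{d/(\alpha+d)})$ for all $s$, which is stronger than needed but avoids the case split.

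There is, however, a concrete slip in your lower bound. You assert that on $\{R^*>r_+(t)\}$ the conditional probability of $\{f(R^*)-D^{(\alpha)}_{\out}<\epsilon t\}$ is at least $1-g(R^*)/(\epsilon t)^2=1-O((R^*)^{-d})$. The Chebyshev inequality does give the lower bound $1-g(R^*)/(\epsilon t)^2$, but the identification $g(R^*)/(\epsilon t)^2=O((R^*)^{-d})$ fails uniformly over $\{R^*>r_+(t)\}$: for fixed $t$ and $r\to\infty$ the quantity $g(r)/(\epsilon t)^2$ behaves like a constant times $r^{2\alpha+d}/t^2$, which diverges, and integrating $g(r)$ against $\P_R(\d r)$ over $(r_+(t),\infty)$ produces a divergent integral whenever $s\le 2\alpha+d$, rendering the bound vacuous precisely in part of the regime of interest. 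The remedy is to invoke the monotonicity of $r\mapsto D_r$, hence of $r\mapsto\P(D_r>t)$: for every $r\ge r_+(t)$ one has
\[
\P(D_r>t)\ge\P\big(D_{r_+(t)}>t\big)\ge 1-\frac{g(r_+(t))}{(\epsilon t)^2}=1-O\big(t^{-d/(\alpha+d)}\big),
\]
which is a bound independent of $r$, and then the lower bound reduces cleanly to $(1-o(1))\P(R^*>r_+(t))$. This is exactly how the paper handles the corresponding term $I_{2,1}$, where the Chebyshev estimate is applied only at the boundary radius $v(-\varepsilon)t^{1/(\alpha+d)}$. With this fix, your argument is complete and essentially identical to the paper's.
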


\Ip, the degree distribution of out-degrees admits asymptotically polynomial tails of order $s/d$. In contrast, when considering in-degrees an entirely different asymptotic behavior emerges. Indeed, in Theorem~\ref{inEdgeThm} below we show that for $s\le d$ the degree distribution is asymptotically degenerate, whereas for $s>d$ it is Poissonian (with finite mean). In order to make this precise, let 
$$D^{(\alpha)}_{\inn,n}=\sum_{(\xi,r)\in X^{(n)}}|\xi|^\alpha1_{\BT_r(o)}(\xi),$$
denote the sum of $\alpha$th powers of the lengths of all incoming edges at the node $(o,R^*)$, where $\alpha\ge0$, $n\ge1$. As in the case of $D^{(\alpha)}_{\out,n}$, we will see that the random variables $D^{(\alpha)}_{\inn,n}$ converge in distribution to a random variable $D^{(\alpha)}_{\inn}$ given by
$$D^{(\alpha)}_{\inn}=\sum_{(\xi,r)\in X}|\xi|^\alpha1_{B_r(o)}(\xi),$$
where as before $X$ denotes an independently $[0,\infty)$-marked homogeneous Poisson point process in $\R^d$ with intensity $1$. The following result is devoted to the tail probabilities $p^{(\alpha)}_{\mathsf{in},t}=\P\big(D^{(\alpha)}_{\inn}>t\big)$
\begin{theorem}
\label{inEdgeThm}
For every $\alpha\ge0$ the random variables $\big\{D^{(\alpha)}_{\inn,n}\big\}_{n\ge1}$ converge to the random variable $D^{(\alpha)}_{\inn}$ in distribution. 
\Mo, $\P\big(D^{(\alpha)}_{\inn}=\infty\big)=1$ for $s\le d$, whereas if $s>d$, then $D^{(0)}_{\inn}$ is a Poissonian random variable with mean $\kappa_d\E R^d$. Finally, if $\alpha>0$ and $s>d$, then  
$$\lim_{t\to\infty} t^{(s-d)/\alpha}p^{(\alpha)}_{\inn,t}=d\kappa_d\beta (s-d)^{-1}.$$
\end{theorem}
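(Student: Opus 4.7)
The plan is to reduce each claim to a property of the marked Poisson process $X$ via the marking theorem, and to treat the three regimes $s\le d$, $(s>d,\alpha=0)$ and $(s>d,\alpha>0)$ separately.  Weak convergence $D^{(\alpha)}_{\inn,n}\Rightarrow D^{(\alpha)}_{\inn}$ is standard: realize $X^{(n)}$ as $X\cap([-n/2,n/2]^d\times[0,\infty))$ glued into the torus, and observe that $\d_{\T_n}(\xi,o)=|\xi|$ whenever $|\xi|\le n/4$, so that the near contribution to $D^{(\alpha)}_{\inn,n}$ coincides with the corresponding truncation of $D^{(\alpha)}_{\inn}$.  The ``far'' contribution requires $R_\xi\ge n/4$, and $n^d\P(R\ge n/4)=O(n^{d-s})$ tends to $0$ when $s>d$; in the degenerate regime $s\le d$, the near contribution alone tends to $\infty$, so convergence is automatic.

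The key quantity is
\[\lambda\;=\;\int_{\R^d}\P(R\ge|\xi|)\,\d\xi\;=\;d\kappa_d\int_0^\infty t^{d-1}\P(R\ge t)\,\d t\;=\;\kappa_d\,\E R^d,\]
the last equality by Fubini.  When $s\le d$ the heavy-tail assumption forces $\lambda=\infty$, and since $\{\xi\in X:R_\xi\ge|\xi|\}$ is by the marking theorem a Poisson process of infinite intensity, it is a.s.\ infinite, giving $D^{(0)}_{\inn}=\infty$ a.s.  For $\alpha>0$ the same conclusion follows on restricting to $|\xi|\ge 1$, where every surviving summand is at least $1$.  When $s>d$ and $\alpha=0$, the integral is finite and the marking theorem directly identifies $D^{(0)}_{\inn}$ as a Poisson random variable with mean $\kappa_d\E R^d$.

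It remains to treat $s>d$ and $\alpha>0$.  Pushing the intensity $\d\xi\otimes\P_R$ restricted to $\{r\ge|\xi|\}$ forward under the map $(\xi,r)\mapsto|\xi|^\alpha$ yields the one-dimensional intensity
\[\nu(\d y)\;=\;(d\kappa_d/\alpha)\,y^{d/\alpha-1}\,\P\big(R\ge y^{1/\alpha}\big)\,\d y\qquad(y>0),\]
so that $D^{(\alpha)}_{\inn}=\int_0^\infty y\,\Xi(\d y)$ for a Poisson point process $\Xi$ of intensity $\nu$.  A direct computation using $\P(R>u)\sim \beta u^{-s}$ gives
\[\nu([t,\infty))\;\sim\;d\kappa_d\beta(s-d)^{-1}\,t^{-(s-d)/\alpha}\qquad(t\to\infty),\]
which is exactly the target constant.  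The theorem thus reduces to the ``one big jump'' identity $p^{(\alpha)}_{\inn,t}\sim\nu([t,\infty))$; the lower bound $p^{(\alpha)}_{\inn,t}\ge 1-\exp(-\nu([t,\infty)))\sim\nu([t,\infty))$ is immediate.

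The main obstacle is the matching upper bound with the correct prefactor.  The plan is a truncation of $\Xi$ at level $t(1-\delta)$: the event that two or more atoms of $\Xi$ lie above this level has probability $O(\nu([t,\infty))^2)=o(\nu([t,\infty)))$, while the Poisson integral $S$ of the atoms below level $t(1-\delta)$ has, by direct computation from the regularly varying tail of $\nu$, mean $o(t)$ and second moment $O(t^{2-(s-d)/\alpha})$, so that a Chebyshev bound controls $\P(S>\delta t)$ by a constant depending on $\delta$ times $t^{-(s-d)/\alpha}$.  Dividing through by $\nu([t,\infty))$, letting $t\to\infty$ and finally $\delta\downarrow 0$ recovers the prefactor $d\kappa_d\beta(s-d)^{-1}$ and closes the gap.
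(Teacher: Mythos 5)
Most of your argument tracks the paper closely: the weak convergence via the canonical coupling, the identification of $D^{(0)}_{\inn}$ as Poisson with mean $\kappa_d\E R^d$, the divergence when $s\le d$, the pushforward to the one-dimensional intensity $\nu$, the computation $\nu([t,\infty))\sim d\kappa_d\beta(s-d)^{-1}t^{-(s-d)/\alpha}$, and the lower bound $p^{(\alpha)}_{\inn,t}\ge 1-\exp(-\nu([t,\infty)))$ are all correct and essentially the paper's argument in lightly repackaged form.

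The gap is in the upper bound, and it is a real one. Your plan truncates $\Xi$ at level $t(1-\delta)$ and tries to show that the sum $S$ of subthreshold atoms satisfies $\P(S>\delta t)=o(\nu([t,\infty)))$ by a Chebyshev bound. But $S$ itself has a regularly varying tail of the same index $\rho=(s-d)/\alpha$: the ``one big jump'' heuristic you invoked for $D^{(\alpha)}_{\inn}$ applies equally to $S$, since $S$ still has atoms of size up to $t(1-\delta)$, and one atom of size about $\delta t$ already pushes $S$ past $\delta t$. Hence $\P(S>\delta t)=\Theta\big(\nu([\delta t,\infty))\big)=\Theta(\delta^{-\rho}t^{-\rho})$, which is of \emph{exactly} the same order as $\nu([t,\infty))$, not smaller. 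Your Chebyshev estimate $\P(S>\delta t)\le C\,\mathrm{Var}(S)/(\delta t)^2$ reproduces this: when $\rho<2$ one has $\mathrm{Var}(S)=\Theta(t^{2-\rho})$, giving $\P(S>\delta t)=O(\delta^{-2}t^{-\rho})$, so that $\P(S>\delta t)/\nu([t,\infty))=O(\delta^{-2})$, which does not vanish as $t\to\infty$ and blows up as $\delta\downarrow 0$. (When $\rho\ge 2$ the situation is worse still: $\mathrm{Var}(S)=O(1)$, not $O(t^{2-\rho})$, and the Chebyshev ratio is $O(\delta^{-2}t^{\rho-2})\to\infty$.) So the final step ``divide by $\nu([t,\infty))$, let $t\to\infty$, then $\delta\downarrow 0$'' does not recover the prefactor; the error term dominates.

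The fix, which is what the paper does, is to introduce an intermediate scale and to control the small-atom contribution by a \emph{count} bound rather than by a variance bound. Concretely, the paper writes
\begin{align*}
\P(D^{(\alpha)}_{\inn}>t)&\le \P\big(\max_{(\xi,r)\in X\cap S}|\xi|^\alpha\ge t(1-\varepsilon)\big)
+\P\big(\#(X\cap S)\ge\varepsilon t^{1/4}\big)\\
&\qquad+\P\big(\#\{(\xi,r)\in X\cap S:|\xi|^\alpha\ge t^{3/4}\}\ge 2\big).
\end{align*}
On the complement of the three events, the single atom above $t^{3/4}$ contributes at most $t(1-\varepsilon)$, and the remaining (at most $\varepsilon t^{1/4}$ many) atoms each contribute less than $t^{3/4}$, so the total is below $t$. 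The second term decays exponentially in $t^{1/4}$ because $\#(X\cap S)$ is Poisson with \emph{finite} mean, and the third is $O(t^{-3\rho/2})=o(t^{-\rho})$. This two-threshold structure ($t^{3/4}$ for atom size, $\varepsilon t^{1/4}$ for atom count, chosen so that their product is $\varepsilon t$ and so that two large atoms is a super-polynomially-suppressed event relative to $t^{-\rho}$) is precisely what is missing from your truncation scheme.
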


Besides considering limit distributions, we also determine leading-order asymptotics for the expectations $\big\{\E D^{(\alpha)}_{\inn,n}\big\}_{n\ge1}$ as $n\to\infty$. Note that here it is not necessary to distinguish between ingoing and outgoing edges, since $\E D^{(\alpha)}_{\out,n}=\E D^{(\alpha)}_{\inn,n}$ for all $n\ge1$. 
\begin{theorem}
\label{asyGrowProp}
Let $\alpha>0$ be arbitrary. Then one can distinguish between three limiting regimes for the random variable $\E D^{(\alpha)}_{\inn,n}$ depending on the sign of $\alpha+d-s$.
\begin{enumerate}
\item If $s>\alpha+d$, then $\lim_{n\to\infty}\E D^{(\alpha)}_{\inn,n}=d\kappa_d(\alpha+d)^{-1}\E R^{\alpha+d}$.
\item If $s=\alpha+d$, then $\lim_{n\to\infty}(\log n)^{-1}\E D^{(\alpha)}_{\inn,n}=d\kappa_d\beta$.
\item If $s<\alpha+d$, then $\lim_{n\to\infty}n^{s-\alpha-d}\E D^{(\alpha)}_{\inn,n}=\beta  \int_{[-1/2,1/2]^d}|\eta|^{\alpha-s}\d \eta$.
\end{enumerate}
\end{theorem}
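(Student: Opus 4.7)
The starting point is the Mecke--Campbell formula for the independently marked Poisson point process, which reduces the expectation to the deterministic integral
\[
\E D^{(\alpha)}_{\inn,n} \;=\; \int_{\T_n}|\xi|^\alpha\,\P\bigl(R \ge |\xi|\bigr)\,\d\xi,
\]
where $|\xi|$ abbreviates $\d_{\T_n}(\xi,o)$. I would then split the domain into the \emph{Euclidean part} $A_n = \{\xi \in \T_n : |\xi|\le n/2\}$, on which $\BT_\rho(o)$ is a genuine Euclidean ball, and the \emph{corner part} $B_n = \T_n \setminus A_n$. On $A_n$, spherical coordinates give
\[
\int_{A_n}|\xi|^\alpha\,\P(R\ge|\xi|)\,\d\xi \;=\; d\kappa_d\int_0^{n/2}\rho^{\alpha+d-1}\,\P(R\ge\rho)\,\d\rho,
\]
whereas on $B_n$ the uniform bound $|\xi|\in[n/2,n\sqrt{d}/2]$ combined with $\mathrm{vol}(B_n)\le n^d$ and the tail hypothesis produces the crude estimate $\int_{B_n} = O(n^{\alpha+d-s})$.

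In case~(i), the assumption $s>\alpha+d$ makes $\int_0^\infty\rho^{\alpha+d-1}\P(R\ge\rho)\,\d\rho = \E R^{\alpha+d}/(\alpha+d)$ finite by the tail-integration identity, so monotone convergence produces the stated limit while the corner contribution is $o(1)$. In case~(ii), the tail assumption forces $\rho^{\alpha+d-1}\P(R\ge\rho)\sim\beta/\rho$ as $\rho\to\infty$, so the Euclidean integral is asymptotic to $d\kappa_d\beta\log n$, and the corner contribution is $O(1)$ and thus negligible after division by $\log n$.

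Case~(iii) requires a different approach, since for $s<\alpha+d$ both the truncated Euclidean integral and the corner part are of order $n^{\alpha+d-s}$, so the radial approach alone would lose a nontrivial constant. Instead I would rescale $\xi = n\eta$ and exploit $|\xi|_{\T_n} = n|\eta|$ for $\eta \in [-1/2,1/2]^d$ to obtain
\[
n^{s-\alpha-d}\,\E D^{(\alpha)}_{\inn,n} \;=\; \int_{[-1/2,1/2]^d}|\eta|^\alpha\,n^s\,\P\bigl(R\ge n|\eta|\bigr)\,\d\eta.
\]
The integrand converges pointwise, for $\eta\ne0$, to $\beta|\eta|^{\alpha-s}$. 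To pass to the limit I would split at a small $\delta>0$: on $\{|\eta|>\delta\}$ dominated convergence applies, since $n^s\P(R\ge n|\eta|)\le\beta(1+\varepsilon)|\eta|^{-s}$ once $n|\eta|\ge t_0(\varepsilon)$, giving the dominator $\beta(1+\varepsilon)|\eta|^{\alpha-s}$, which is integrable precisely when $s<\alpha+d$; on $\{|\eta|\le\delta\}$ the trivial bound $\P(R\ge\cdot)\le1$ renders the contribution $O(\delta^{\alpha+d-s}+n^{s-\alpha-d})$, which vanishes upon letting $n\to\infty$ and then $\delta\to0$.

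The main obstacle lies in case~(iii): one must simultaneously verify that the small-$\eta$ region, where the tail asymptotic of $R$ is not yet in force, contributes only $o(1)$, and that the dominator $|\eta|^{\alpha-s}$ on the unit cube is integrable in precisely the regime $s<\alpha+d$ of the hypothesis. This borderline integrability is what ties the three cases of the theorem to the three possible behaviors of the limit and explains why the corner region, which is invisible in cases~(i)--(ii), contributes the full cube $[-1/2,1/2]^d$ rather than just the inscribed ball in the limit formula for case~(iii).
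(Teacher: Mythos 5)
Your proposal is correct and follows essentially the same route as the paper: Campbell's formula reduces $\E D^{(\alpha)}_{\inn,n}$ to the deterministic integral $\int_{\T_n}|\xi|^\alpha\P(R>|\xi|)\,\d\xi$, which is then analyzed by splitting the domain into an inner region where the tail asymptotic $t^s\P(R>t)\to\beta$ is in force, a region near the origin, and the ``corner'' region, and in part (iii) rescaling to the unit torus. The only cosmetic difference is in part (i), where the paper invokes stochastic domination by $D^{(\alpha)}_{\inn}$ together with convergence in distribution, while you apply monotone convergence directly to the radial integral; both hinge on the same Campbell computation.
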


Next, we investigate the asymptotic behavior of chemical distances in scale-free Gilbert graphs, where we allow a directed edge in $G(X^{(n)})$ to be traversed in both directions. For the analysis of $G(X^{(n)})$, it will be convenient to distinguish between the regimes $s<d$, $s=d$ and $s>d$. We say that a family of events occurs \emph{with high probability (whp)} if the probabilities of the events tend to $1$ as $n\to\infty$. In the case where $s<d$, whp there exists some point of $X^{(n)}$ that is connected to all other points by an edge in $G(X^{(n)})$. \Ip, the diameter of $G(X^{(n)})$ is at most $2$ whp. The cases $s=d$ and $s>d$ are more subtle. If $s>d$, then as $n\to\infty$ the probability that $G(X^{(n)})$ contains isolated vertices is bounded away from $0$, see Proposition~\ref{nonConnLem}. Still, if $\beta$ is \suf large, then it follows from continuum percolation that whp there exists a giant component containing a positive proportion of the vertices. But even inside this giant connected component the effects of the heavy-tailed nature of $R$ are barely noticeable. This is to be understood in the sense that chemical distances (i.e., minimal number of hops) between nodes at distance $n$ grow almost linearly in $n$. To be more precise, putting $\mathsf{e}_1=(1,0,\ldots,0)\in\T_n$ and denoting by $q(x)$ the closest point of $X$ seen from a given point $x\in\T_n$, we have the following result.
\begin{theorem}
\label{subCrit}
Assume that $s>d$ and let $\alpha>0$ be arbitrary. Then, the chemical distance between $q(-n\mathsf{e}_1/4)$ and $q(n\mathsf{e}_1/4)$ is at least $n/(\log n)^{\alpha}$ whp.
\end{theorem}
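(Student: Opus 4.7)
\emph{Proof strategy.} I would proceed by a first-moment estimate. Set $M := n/(\log n)^\alpha$; it suffices to show that the expected number of paths of length at most $M$ in $G(X^{(n)})$ connecting fixed small neighborhoods $A$ of $-n\mathsf{e}_1/4$ and $B$ of $n\mathsf{e}_1/4$ tends to $0$ as $n\to\infty$. A standard Poisson concentration argument places $q(\pm n\mathsf{e}_1/4)$ in such unit-size balls with high probability, reducing the theorem to this count.

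By the Mecke equation applied to the independently marked Poisson process $X^{(n)}$, the expected number of paths of length exactly $m$ equals
\[
\int_{A \times \T_n^{m-1} \times B} \P\bigl(\text{all } m \text{ edges exist} \mid \vec\xi\bigr)\, \d\vec\xi,
\]
where the inner probability is taken over the iid radii. The key observation for bounding this conditional probability is that the odd-indexed edge events $\bigl\{\max(R_i, R_{i+1}) \geq \ell_i\bigr\}$ with $\ell_i := \d_{\T_n}(\xi_i, \xi_{i+1})$ involve pairwise disjoint sets of marks and are therefore mutually independent. This yields
\[
\P\bigl(\text{all edges} \mid \vec\xi\bigr) \leq \prod_{i \text{ odd}} \P\bigl(\max(R_i, R_{i+1}) \geq \ell_i\bigr) \leq \prod_{i \text{ odd}} C\bigl(1 \wedge \ell_i^{-s}\bigr)
\]
for a suitable constant $C$ depending on $\beta$; a symmetric bound holds for the even-indexed edges.

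Integration then reduces the expected path count to a convolution of the kernel $k(w) := C(1 \wedge |w|^{-s})$, which is integrable on $\R^d$ precisely because $s > d$. I would combine the odd and even product bounds---via Cauchy--Schwarz when $s > 2d$, or via an additional dyadic decomposition of edges by length scale when $d < s \leq 2d$, the latter regime exploiting the a priori high-probability bound $\max_{v \in X^{(n)}} R_v \leq n^{d/s + \varepsilon}$ together with the density $\sim n^d L^{-s}$ of vertices of radius at least $L$---to control the resulting convolution at displacement of order $n/2$.

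The main obstacle is pushing this estimate tightly enough to reach the threshold $M = n/(\log n)^\alpha$: a naive application only rules out chemical distances of order $\log n$ (because the total mass $\int k$ is $> 1$, yielding a factor $Z^m$ that must be beaten). The required strengthening relies on the spatial sparsity of heavy-radius hubs: at radius scale $L$, the typical inter-hub distance is of order $L^{s/d}$, which is much larger than $L$ whenever $s > d$, so hubs cannot be chained efficiently and the path must traverse many small-radius vertices to bridge between them. Together with the endpoint constraint $|\xi_m - \xi_0| \approx n/2$, which forces the convolution to be evaluated in its heavy-tailed one-big-jump regime, this delivers the near-linear lower bound on $m$ and, after summation over $m \leq M$, the desired $o(1)$ total.
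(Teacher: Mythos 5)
Your first-moment strategy runs into a fundamental obstruction that you yourself partially identify but do not overcome: once the edge-existence probabilities are integrated out (by Mecke's formula, and after the odd/even or Cauchy--Schwarz split), each interior vertex contributes a factor $Z=\int_{\R^d}k$, and this $Z$ is strictly greater than $1$ whenever $\beta$ is large enough for the graph to have a giant component --- precisely the regime where a lower bound on chemical distance is nontrivial. Consequently the expected number of self-avoiding paths of length $m$ between the two endpoints is at least of order $Z^{m}$ up to polynomial corrections, and the endpoint constraint at distance $\sim n/2$ helps only by a single heavy-tailed factor: in the one-big-jump regime $k^{*m}(n/2)\asymp m\,Z^{m-1}k(n/2)$, which still explodes exponentially in $m$. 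Summing over $m\le n/(\log n)^\alpha$ therefore cannot produce the required $o(1)$; the union bound over paths is doomed whenever the expected degree exceeds one, regardless of how cleverly the odd/even edges are decoupled or dyadically decomposed. The informal appeal to ``spatial sparsity of hubs'' in the last paragraph captures exactly the right intuition, but it is never converted into an estimate that beats the $Z^m$ factor, and I do not see a way to do so within the first-moment framework.

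The paper's argument takes a structurally different route that sidesteps path-counting entirely. It sets up a multiscale renormalization reminiscent of fractal percolation: the torus is recursively subdivided into cubes at scales $n^{p^k}$ with $p=(s+d)/(2s)$, a notion of $(\varepsilon,k)$-good cube is defined by requiring both that no point with a too-large radius lies inside and that bad subcubes are sparse one level down, and Lemma~\ref{kGoodProbLem} shows that good cubes stochastically dominate a high-density Bernoulli field. A scale-adapted loop-erasure and the ``$(k',k)$-length'' functional (Lemmas~\ref{thinBoundLem}--\ref{indChemLem}) then turn goodness into a deterministic lower bound: any path that mostly sees good cubes must cross $\gtrsim b_j$ subcubes at each level $j$, so $\#\gamma \gtrsim (1-c\varepsilon)^k \prod_j b_j \approx n^{1-p^k}$. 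Choosing $k$ so that $n^{p^k}\sim \log\log n$ gives the $n/(\log n)^\alpha$ bound. The key point is that this argument bounds the \emph{length of every surviving path} geometrically, rather than the \emph{number} of paths, and hence is immune to the exponential path-proliferation that kills the first-moment method. If you want to pursue a moment-type proof, you would at minimum need a second-moment or renormalization ingredient to suppress overcounting, which is essentially reinventing the paper's approach.
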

We conjecture that the sublogarithmic correction factor is only an artifact of our proof.
\begin{conjecture}
Assume that $s>d$. Then, there exists a constant $c=c(\beta,d)>0$ \st the chemical distance between $q(-n\mathsf{e}_1/4)$ and $q(n\mathsf{e}_1/4)$ is at least $cn$ whp.
\end{conjecture}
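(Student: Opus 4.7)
The plan is to sharpen the first-moment path-counting argument underlying Theorem~\ref{subCrit} by exploiting the hub structure that is specific to the scale-free Gilbert model. A simple path $\gamma=(x_0,\ldots,x_k)$ of length $k$ joining $q(-n\mathsf{e}_1/4)$ and $q(n\mathsf{e}_1/4)$ satisfies $\sum_{i=1}^k|\xi_i-\xi_{i-1}|\geq n/2$ by the triangle inequality on the torus. For every edge $\{x_{i-1},x_i\}$ at least one of $R_{x_{i-1}},R_{x_i}$ must exceed $|\xi_i-\xi_{i-1}|$; charging each edge to such an endpoint of larger mark and noting that each path vertex carries at most two charges yields the energy inequality
\[
\sum_{i=0}^k R_{x_i}\ \geq\ n/4.
\]

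Under $s>d$ the mark distribution has finite $d$-th moment, so typical marks are of order $1$. The heavy-tail large-deviation principle for iid sums then asserts that, for $k=o(n)$, the event $\{\sum_{i=0}^k R_{x_i}\geq n/4\}$ is dominated by a single-large-jump scenario and carries probability of order $k\beta(n/4)^{-s}$. I would combine this bound with the multivariate Mecke formula for the marked Poisson process to control $\E\mathcal{N}_k$, the expected number of self-avoiding length-$k$ paths between the two designated endpoints, by an estimate of the form
\[
\E\mathcal{N}_k\ \leq\ C(\beta,d)^k\cdot k\cdot n^{-s},
\]
where $C(\beta,d)$ captures the spatial entropy of placing a typical short edge of the path and is finite because $\E R^d<\infty$ keeps most path edges of bounded length. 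Choosing $c>0$ with $C(\beta,d)^c<1$ and summing the Markov bound over $k\leq cn$ would then exclude paths shorter than $cn$ with probability tending to $1$.

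The principal obstacle, which is exactly what forces the sublogarithmic correction in Theorem~\ref{subCrit}, is the rigorous decoupling of the spatial-entropy factor $C^k$ from the mark large-deviation factor $n^{-s}$. The permissible displacement between two consecutive path vertices is itself controlled by the local marks, so long path-edges unlock more spatial entropy, and a naive splitting of the joint expectation leaks a factor that grows as a positive power of $\log n$. Closing this gap will most likely require a multiscale decomposition of path edges according to their dyadic length $[2^j,2^{j+1})$ together with an iterated first-moment estimate at each scale, exploiting the essential difference from long-range percolation that every long edge in $G(X^{(n)})$ forces a genuine hub vertex of correspondingly large mark, whose density $\sim 2^{-js}$ is strictly summable precisely when $s>d$.
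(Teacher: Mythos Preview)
The statement you are addressing is a \emph{conjecture} in the paper; the paper offers no proof, and indeed Theorem~\ref{subCrit} is presented precisely as the best available substitute, with the sublogarithmic loss flagged as possibly an artifact. So there is no ``paper's proof'' to compare against, and the relevant question is whether your proposal closes the gap. It does not, and you say so yourself in the final two paragraphs: what you have written is a strategy together with an honest identification of the obstruction, not a proof.

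The energy inequality $\sum_i R_{x_i}\ge n/4$ is correct and elegant, and the single-big-jump heuristic for heavy-tailed sums is the right intuition. The genuine gap is the one you name: in the Mecke expansion for $\E\mathcal{N}_k$, the spatial volume available to place $\xi_i$ is $\kappa_d\max(r_{i-1},r_i)^d$, so the ``entropy'' factor and the ``energy'' constraint live on the \emph{same} random variables and do not decouple. Concretely, the dominant contribution to $\{\sum R_{x_i}\ge n/4\}$ comes from a single index $j$ with $R_{x_j}\asymp n$, which costs probability $\asymp n^{-s}$; but that same vertex then offers volume $\asymp n^d$ to each of its two neighbours in the path, so the would-be bound $\E\mathcal{N}_k\le C^k k n^{-s}$ picks up an extra $n^{2d}$ from those two placements, and $n^{2d-s}$ need not be small. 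More generally, any allocation of large radii along the path that realises the energy constraint simultaneously inflates the combinatorial count, and a first-moment path-counting argument sees the product of these effects, not their ratio.

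Your proposed remedy, a dyadic decomposition of edge lengths with an iterated first-moment estimate at each scale, is essentially the mechanism already implemented in the proof of Theorem~\ref{subCrit} via the hierarchy of $(\varepsilon,k)$-good cubes; that machinery controls the interaction between scales but still loses a factor $(1-c\varepsilon)$ per scale, which over $\Theta(\log\log n/\log p)$ scales produces exactly the $(\log n)^{-\alpha}$ correction. Removing that loss would require either a second-moment/concentration input (so that one does not pay the worst case at every scale) or a structural argument specific to the Gilbert graph that rules out the near-degenerate paths responsible for the leak. Neither is supplied here, so the proposal remains a programme rather than a proof.
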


Finally, we assume that $s=d$. On the one hand, in contrast to the case $s>d$ the heavy-tailedness of the radius distribution has a substantial effect on chemical distances. On the other hand, the connectivity structure still does not degenerate as in the case $s<d$. Therefore, from the point of view of modeling scale-free network structures, the regime $s=d$ might be considered to be the most interesting one. In Theorem~\ref{sd1Thm} below, we show that if the parameter $\beta$ is sufficiently large, then not only the graph $G(X^{(n)})$ is connected whp, but, moreover, its diameter remains bounded in the sense that whp it is dominated by the total progeny of a subcritical Galton-Watson process. This asymptotic behavior differs decisively from the one of either long-range percolation~\cite{lrp1,lrp2} or ultra-small scale-free geometric networks~\cite{ultraSmall}. As we will see in the proof of Theorem~\ref{sd1Thm}, this scaling regime is a consequence of the close relationship between scale-free Gilbert graphs and fractal percolation processes.
\begin{theorem}
\label{sd1Thm}
Assume that $\beta>d^{d/2}2^{2d+1}(d+1)\log 2$. Then, the graph $G(X^{(n)})$ is connected whp, and, moreover, the diameter $\mathsf{diam}\,G(X^{(n)})$ of $G(X^{(n)})$ is stochastically dominated by an affine function in the total progeny of a subcritical Galton-Watson process whp. To be more precise, \tes a coupling between $X^{(n)}$ and the total progeny $T$ of a subcritical Galton-Watson process \st $\P(\mathsf{diam}\,G(X^{(n)})>2+2^dT)\in O(n^{-1})$.
\end{theorem}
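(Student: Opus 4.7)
The plan is to exploit the fractal-percolation picture alluded to in the introduction by setting up a dyadic hierarchy on the torus. For $k = 0, 1, \ldots, k_{\max} := \lfloor\log_2 n\rfloor$, partition $\T_n$ into $2^{kd}$ dyadic cubes of side length $\ell_k = n/2^k$, and call a level-$k$ cube $Q$ \emph{covered} if it contains a point $v_Q = (\xi, r) \in X^{(n)}$ with $r \geq c \ell_k$ for $c := 4\sqrt{d}$; otherwise $Q$ is \emph{bad}. The constant $c$ is chosen so that $c\ell_k$ equals the diameter of the grandparent of $Q$, hence $\BT_r(\xi)$ covers the entire grandparent. This yields three crucial consequences: (i) every point of $X^{(n)}$ lying in the grandparent of a covered $Q$ receives an incoming edge from $v_Q$; (ii) any two covered cubes sharing an ancestor at most two levels higher have mutually adjacent covering vertices in $G(X^{(n)})$; and (iii) at levels $k \leq 3$ the covering radius $c\ell_k$ already exceeds the torus radius $\sqrt{d}\,n/2$, so a single covered cube at such a level produces a super-vertex connected to every point of $X^{(n)}$.

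Next we dominate the bad cubes by a subcritical Galton-Watson tree. Assigning each level $k$ its own radius window $[c\ell_k, 2c\ell_k)$ makes the events ``cube fails its own window'' independent across both cubes (by spatial disjointness) and levels (by disjoint radius windows). Using the heavy-tail assumption, the Poisson intensity of such points inside a level-$k$ cube equals
\[
\ell_k^d\bigl(\P(R \geq c\ell_k) - \P(R \geq 2c\ell_k)\bigr) \geq \beta(1-\varepsilon)\, c^{-d}(1 - 2^{-d}) \geq \beta(1-\varepsilon)\,2^{-2d-1}\,d^{-d/2},
\]
where $\varepsilon > 0$ is fixed small enough that the lower tail bound holds for every $k \leq k_{\max}$ and where we have used $1 - 2^{-d} \geq 1/2$. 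Thus each cube is bad with probability at most $q := \exp(-\beta(1-\varepsilon)\,2^{-2d-1}\,d^{-d/2})$, and the process of bad cubes (rooted at the $2^d$ level-$1$ cubes) is stochastically dominated by a Galton-Watson tree $\mathcal{T}$ with $\mathrm{Bin}(2^d, q)$ offspring distribution. The hypothesis $\beta > d^{d/2} 2^{2d+1}(d+1)\log 2$ is precisely calibrated to enforce $\mu := 2^d q \leq 1/2$ (after taking $\varepsilon$ sufficiently small), so $\mathcal{T}$ is subcritical with a.s.\ finite total progeny $T$.

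On the event that $\mathcal{T}$ dies out within the first $k_{\max}$ generations, consequence (i) places every vertex of $X^{(n)}$ at graph-distance at most one from some covering vertex, while (ii)--(iii) make the spanner on the covering vertices of covered cubes connected. A short induction along the dyadic tree of bad cubes, using (ii) to cross between covered cousins via their common grandparent whenever the direct parent is bad, then shows that this spanner has diameter at most $2^d T$ (each bad cube forcing at most $2^d$ detour edges). Adding the two endpoint-to-spanner hops yields $\mathsf{diam}\,G(X^{(n)}) \leq 2 + 2^d T$ on this event. The complementary event ``$\mathcal{T}$ reaches depth $> k_{\max}$'' has probability at most $2^d \mu^{k_{\max} - 1} \leq 2^{d+1}/n = O(n^{-1})$ by the first-moment bound on Galton-Watson generations, which simultaneously yields connectivity whp and the claimed coupling of $X^{(n)}$ with $T$.

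The main difficulty will be the combinatorial bookkeeping of the diameter step: turning the subcritical domination into the \emph{quantitative} bound $2^d T$ on the spanner diameter requires carefully tracking how bad subtrees can disrupt the chain of mutually adjacent covering vertices and verifying that the skip afforded by (ii) suffices to reroute with only $O(2^d)$ extra edges per bad cube. A secondary issue is that the hypothesis is tight in the sense that $\mu \leq 1/2$ is exactly what forces $\P(\text{survival past } k_{\max}) = O(n^{-1})$; any weaker subcritical threshold would give only $n^{-c}$ with some $c < 1$, and all constants in the chain of estimates must be chased through accordingly.
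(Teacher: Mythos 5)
Your proposal is essentially the paper's own proof: a dyadic hierarchy on $\T_n$, disjoint per-level radius windows so that cube-coverage events are independent across cubes and levels, a Poisson-intensity lower bound of order $\beta\,2^{-2d-1}d^{-d/2}$ that pushes the cover-failure probability below $2^{-d-1}$, stochastic domination by a Galton--Watson tree with $\mathrm{Bin}(2^d,2^{-d-1})$ offspring (hence mean $\le 1/2$), the backbone-of-covering-vertices argument giving $\diam G(X^{(n)})\le 2+2^dT$, and the $O(n^{-1})$ tail bound from subcriticality. One bookkeeping correction you should make: the lower tail estimate $\P(R>t)\ge\beta(1-\varepsilon)t^{-d}$ only holds for $t\ge t_0(\varepsilon)$, so the recursion must be stopped at a level $k_0$ with $c\ell_{k_0}\ge t_0$ rather than at $k_{\max}=\lfloor\log_2 n\rfloor$ (the paper takes $k_0=\lfloor\log(2\sqrt d\,n/t_0(1/2))/\log2\rfloor$); since $k_0=\log_2 n-O(1)$, the probability $\mu^{k_0}=O(n^{-1})$ is unaffected, and the clean way to obtain the factor $2^d$ in the diameter bound is the paper's count $\#B=\sum_k\#A^{(0)}_{k+1}\le 2^d\sum_k\#A^{(1)}_k\le 2^dT$ rather than the ``detour edges'' heuristic.
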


\subsection{Thinned scale-free Gilbert graph}
\label{thtggSec}
As explained in Section~\ref{intSec}, the scale-free Gilbert graph $G(X^{(n)})$ contains a large number of redundant edges, which can be removed without affecting its connected components. To be more precise, if $x_1=(\xi_1,r_1),x_2=(\xi_2,r_2),x_3=(\xi_3,r_3)\in X^{(n)}$ are \st $r_1>r_2>r_3$, $\xi_2\in \BT_{r_1}(\xi_1)$ and $\xi_3\in \BT_{r_1}(\xi_1)\cap \BT_{r_2}(\xi_2)$, then in $G(X^{(n)})$ the point $x_3$ is connected both to $x_1$ and $x_2$. However, the edge from $x_3$ to $x_1$ is redundant since one can also reach $x_1$ from $x_3$ by first moving from $x_3$ to $x_2$ and then from $x_2$ to $x_1$. See Figure~\ref{thinCon} for an illustration of this configuration.
In order to reduce the total network length, we therefore introduce a variant $G^\p$ of the original graph $G$, where such redundancies are removed. 

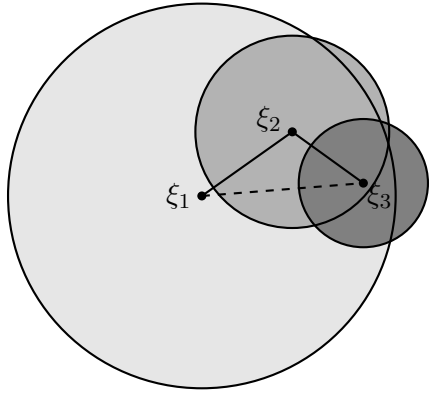
\begin{figure}[!htpb]
\centering
\begin{tikzpicture}[scale=0.85]
\fill[black!10!white] (0,0) circle (3cm);
\fill[black!30!white] (1.4,1.0) circle (1.5cm);
\fill[black!50!white] (2.5,0.2) circle (1cm);

\draw[thick] (0,0) circle (3cm);
\draw[thick] (1.4,1.0) circle (1.5cm);
\draw[thick] (2.5,0.2) circle (1cm);

\fill (0,0) circle (2pt);
\fill (1.4,1.0) circle (2pt);
\fill (2.5,0.2) circle (2pt);

\draw[thick] (0,0) --(1.4,1.0)--(2.5,0.2);
\draw[thick,dashed] (0,0) -- (2.5,0.2);

\coordinate[label=180:$\xi_1$] (u) at (0.0,0.0);
\coordinate[label=180:$\xi_2$] (u) at (1.4,1.2);
\coordinate[label=0:$\xi_3$] (u) at (2.4,0.0);

\end{tikzpicture}
\caption{The dashed edge connecting $x_1$ and $x_3$ is redundant}
\label{thinCon}
\end{figure}

To be more precise, for any finite subset $\varphi$ of $\T_n\times[0,\infty)$ define the \emph{thinned scale-free Gilbert graph} $G^\p(\varphi)$ as the graph on $\varphi$, where an edge is drawn from $(\xi,r)\in\varphi$ to $(\eta,t)\in\varphi$ if $(\eta,t)\in \BT_r(\xi)\times[0,r)$ and there does not exist $(\zeta,w)\in\varphi\cap (\BT_r(\xi)\times(t,r))$ \st $\eta\in\BT_w(\zeta)$.

By definition, $G^\p(X^{(n)})$ is a subgraph of $G(X^{(n)})$ and we will see in Proposition~\ref{conProp} that the thinning does not affect the connected components of $G(X^{(n)})$. From the point of view of telecommunication networks, we can reach the same set of subscribers using a smaller cable length. Next, we investigate the question whether the reduction of cable length is substantial. For concreteness, we assume that $s=d$. We show that the leading order of the expected power-weighted sum of lengths of outgoing edges in $G^\p(\{(o,R^*)\}\cup X^{(n)})$ is strictly smaller when compared to the graph $G(X^{(n)})$, see Theorem~\ref{asyGrowProp}. 
To be more precise, defining
$$D^{\p,(\alpha)}_{\out,n}=\sum_{(\xi,r)\in X^{(n)}}|\xi|^\alpha1_{((o,R^*),(\xi,r))\text{ is an edge in }G^\p(\{(o,R^*)\}\cup X^{(n)})},$$
where $\alpha\ge 0$  and $n\ge1$ we have the following result. 
\begin{theorem}
\label{redElThm}
Assume that $s=d$. Then, the expected out-degree $\E D^{\p,(0)}_{\out,n}$ is asymptotically bounded from above by a constant, i.e., $\E D^{\p,(0)}_{\out,n}\in O(1)$. \Mo, if $\alpha>0$, then $\E D^{\p,(\alpha)}_{\out,n}\in O(n^{\alpha-\delta})$ for some $\delta>0$.
\end{theorem}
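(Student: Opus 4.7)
The plan is to apply the Slivnyak--Mecke formula to the marked Poisson process $X^{(n)}$ and the extra root $(o,R^*)$, which yields
$$\E D^{\p,(\alpha)}_{\out,n} = \int_0^\infty dP_R(u)\int_0^u dP_R(r)\int_{\BT_u(o)}|\xi|^\alpha\, e^{-\Lambda(\xi,r,u)}\,d\xi,$$
where
$$\Lambda(\xi,r,u)=\int_r^u\bigl|\BT_u(o)\cap \BT_w(\xi)\bigr|\,dP_R(w)$$
is the expected number of Poisson points $(\zeta,w)$ in $\BT_u(o)\times(r,u)$ that cover $\xi$; the exponential factor is the void probability for these ``thinning witnesses'' whose absence is exactly the condition that the edge $(o,R^*)\to(\xi,r)$ survives the thinning.

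The key quantitative input is a lower bound on $\Lambda$. For $w<u-|\xi|$ the ball $B_w(\xi)$ sits entirely inside $B_u(o)$, so $|B_u(o)\cap B_w(\xi)|=\kappa_d w^d$. Integrating this against $dP_R(w)$ in the tail regime (using $\P(R>t)\sim\beta t^{-d}$ and an integration by parts) gives
$$\Lambda(\xi,r,u)\ge \theta\log\Bigl(\frac{u-|\xi|}{\max(r,t_0)}\Bigr),\qquad \theta:=\kappa_d\beta d(1-\epsilon),$$
and hence $e^{-\Lambda(\xi,r,u)}\le(\max(r,t_0)/(u-|\xi|))^\theta$. Since $\beta>0$, one can fix $\epsilon>0$ small enough that $0<\theta<d=s$. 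This polynomial suppression is the decisive feature that $G^\p$ has and $G$ lacks, and it is the source of the improvement over Theorem~\ref{asyGrowProp}.

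I would then complete the integration by a case split: (a) $r\ge u/4$, handled by the trivial bound $e^{-\Lambda}\le 1$ together with $\P(R>u/4)\asymp u^{-d}$; (b) $r<u/4$ and $|\xi|\le u-r$, where the bound on $\Lambda$ applies --- passing to polar coordinates and substituting $t=u-|\xi|$ leaves $\int_r^u t^{-\theta}(u-t)^{d-1}dt$, bounded by $Cu^{d-\theta}$ when $\theta<1$ and by $Cu^{d-1}r^{1-\theta}$ when $\theta\ge1$ (using $d\ge2$ so that the residual factor of $r$ is integrable against $dP_R$); (c) the thin boundary shell $|\xi|>u-r$, of volume $O(ru^{d-1})$, handled trivially using $\E R<\infty$; and (d) the regime $u>n/4$ where the torus wraps, for which restricting the intermediate radius to $(r,n/4)$ still delivers $e^{-\Lambda}\lesssim(r/n)^\theta$ and a total contribution $O(n^{\alpha-\theta})$ via $\P(R>n/4)\asymp n^{-d}$ and $|\T_n|=n^d$. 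For $\alpha=0$ every piece is bounded by a constant, using the finiteness of $\E R^\theta$ and $\E R^{d-\theta}$ (valid for $0<\theta<d$), and hence $\E D^{\p,(0)}_{\out,n}\in O(1)$. For $\alpha>0$, inserting $|\xi|^\alpha\le u^\alpha$ and integrating $u^{\alpha+d-\theta}$ against $dP_R$ up to $n/4$ by parts yields $O(n^{(\alpha-\theta)_+})$ from (b); combining with (a), (c), (d) produces $\E D^{\p,(\alpha)}_{\out,n}\in O(n^{\alpha-\delta})$ with $\delta:=\min(\alpha,\theta,1)>0$.

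The hard part is keeping the lower bound on $\Lambda$ effective uniformly in $\xi$: the intersection volume $|B_u(o)\cap B_w(\xi)|$ degenerates as $\xi$ approaches $\partial B_u(o)$, and in the large-$\beta$ regime $\theta\ge 1$ the polar integral of $(u-|\xi|)^{-\theta}$ ceases to be integrable on its own. Both difficulties are resolved by the explicit cutoff $r<u-|\xi|$, which absorbs a factor of $r^\theta$ and leaves only a single factor of $r$, still integrable against $dP_R$ thanks to $d\ge2$; the remaining care is in controlling the multiplicative $(1\pm\epsilon)$ errors from the tail asymptotic $\P(R>t)\sim\beta t^{-d}$ uniformly via the parameter $t_0(\epsilon)$.
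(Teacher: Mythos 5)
Your approach is genuinely different from the paper's. The paper fixes an inward-pointing sector $S_{\gamma,\eta}$ of angular opening $\gamma$ and radial extent $[\sqrt{|\eta|},|\eta|]$, sitting between $\eta$ and the origin; any mark there lying above the local distance is a thinning witness, and because the sector is always well inside $B_{R^*}(o)$ regardless of how close $\eta$ is to the boundary, the void-probability bound $|\eta|^{-\beta\sigma_0/8}$ is uniform with no degeneracy to fight. You instead use the full intersection volume $|B_u(o)\cap B_w(\xi)|$, which does degenerate near $\partial B_u(o)$, and handle that head-on with the cutoff $r<u-|\xi|$. Both routes can deliver the polynomial suppression $(u-|\xi|)^{-\theta}$ in place of the paper's $|\eta|^{-\beta\sigma_0/8}$, and your sketch of the case split (a)--(d) is a plausible organization of the remaining integration.

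There is, however, a concrete error in the step that makes your exponent usable. You set $\theta=\kappa_d\beta\,d(1-\epsilon)$ and assert that ``Since $\beta>0$, one can fix $\epsilon>0$ small enough that $0<\theta<d$.'' This goes the wrong way: as $\epsilon\downarrow 0$, $\theta\to d\kappa_d\beta$, so whenever $\kappa_d\beta\ge 1$ (e.g.\ $d=2$, any $\beta\ge 1/\pi$) no choice of \emph{small} $\epsilon$ gives $\theta<d$. Since $s=d$, the moments $\E R^\theta$ and $\E R^{d-\theta}$ you invoke are finite precisely when $0<\theta<d$, so the conclusion of your case (b) and (d) bookkeeping is not justified as written. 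The fix is easy but must be stated: either choose $\epsilon$ \emph{close to $1$} (so that $\kappa_d\beta(1-\epsilon)<1$, at the cost of a larger $t_0(\epsilon)$), or simply cap the exponent, using $e^{-\Lambda}\le\big(\max(r,t_0)/(u-|\xi|)\big)^{\theta'}$ with $\theta'=\min(\theta,d/2)$, which is valid once the argument of the logarithm is at least $1$. The latter point is also a small loose end: your lower bound on $\Lambda$ requires $u-|\xi|\ge\max(r,t_0)$, not merely $u-|\xi|\ge r$, so the thin shell $u-t_0<|\xi|\le u-r$ (relevant only when $r<t_0$) must be absorbed into case (c) rather than (b). Finally, your claimed $\delta=\min(\alpha,\theta,1)$ is slightly too large at the threshold $\alpha=1$, $\theta\ge1$, where the $u$-integral in (b) yields an extra $\log n$; taking $\delta$ any constant strictly less than $\min(\alpha,\theta',1)$ repairs this. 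With these corrections the strategy works and would give a bound of the same form as the paper's $O\big(n^{\max\{0,\alpha-\beta\sigma_0/16\}}\big)$, with a comparable (and for small $\gamma$ often better) exponent.
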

In Section~\ref{multiSec}, we show that removing redundancies does not destroy the property of connectivity. However, this thinning operation \emph{does} influence the quality of connectivity, in the sense that chemical distances will increase. Indeed, instead of moving from $x\in X^{(n)}$ to $y\in X^{(n)}$ directly along an edge in the graph $G(X^{(n)})$, introducing a multilayer topology via $G^\p(X^{(n)})$ might force us to move through a potentially large number of layers, before we can get from $x$ to $y$. Still, chemical distances increase at most by a logarithmic factor in the size of the sampling window.
\begin{theorem}
\label{intMedProp}
There exists $c_1>0$ \st whp any $x,y\in X^{(n)}$ that are adjacent in $G(X^{(n)})$ can be connected by a path in $G^\p(X^{(n)})$ consisting of at most $c_1\log n$ hops.
\end{theorem}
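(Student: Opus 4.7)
The approach is to construct, for each edge $\{x,y\}$ of $G(X^{(n)})$, an explicit path in $G^\p(X^{(n)})$ of length $O(\log n)$. Assume without loss of generality that $x=(\xi_x,r_x)$ and $y=(\eta_y,r_y)$ with $r_x\ge r_y$ and $\eta_y\in \BT_{r_x}(\xi_x)$. The plan is to proceed by recursion on the failure of the $G^\p$-edge condition: if $(x,y)$ already belongs to $G^\p(X^{(n)})$ output the trivial one-hop path, and otherwise the definition of $G^\p$ produces an intermediate $z=(\zeta,w)\in X^{(n)}$ with $\zeta\in \BT_{r_x}(\xi_x)$, $w\in (r_y,r_x)$, and $\eta_y\in \BT_w(\zeta)$; this intermediate is used to split the task into the two sub-problems $(x,z)$ and $(z,y)$, on which we recurse.

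To obtain a logarithmic bound on the length of the resulting path, we choose $z$ so as to balance the recursion on the logarithmic radius axis: among all candidate intermediates, pick one whose dyadic scale $\lfloor\log_2 w\rfloor$ lies as close as possible to the midpoint of the interval $[\lfloor\log_2 r_y\rfloor,\lfloor\log_2 r_x\rfloor]$. Combined with the a priori estimate that whp all radii of $X^{(n)}$ lie in an interval of the form $[n^{-c},n^c]$ for some $c=c(d,s)>0$---a standard tail bound for the extremes of roughly $n^d$ i.i.d.\ heavy-tailed marks---this would produce a recursion tree of depth $O(\log\log n)$ and at most $O(\log n)$ leaves, each leaf contributing a single edge to the final $G^\p$-path. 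Uniformity over the $O(n^{2d})$ candidate pairs $(x,y)$ is then achieved by a union bound, provided that the per-pair failure probability is polynomially small, which fixes the constant $c_1$.

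The main obstacle is to guarantee that an intermediate close to the target midpoint scale actually exists whp, uniformly over all $G$-adjacent pairs. A Poisson count shows that the expected number of intermediates in $\BT_{r_x}(\xi_x)$ covering $\eta_y$ and having radius in a dyadic window $I_k=[2^k,2^{k+1})$ is of order $2^{k(d-s)}$ up to constants, the size of which depends delicately on the sign of $d-s$. When this expectation is only of constant order---most notably in the critical regime $s\approx d$---one cannot demand an intermediate at any prescribed scale, and must argue instead that whp some scale in a window of polyloglog width contains one, so that the scale-splitting recursion proceeds up to an additive polyloglog correction in the depth. Controlling this step together with the uniformity over pairs, in a spirit similar to the scale-by-scale computations that already appear in the proofs of Theorems~\ref{asyGrowProp} and~\ref{redElThm}, is where the bulk of the technical work of the proof would lie.
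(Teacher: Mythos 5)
Your recursive construction---insert the intermediate $z$ witnessing the failure of the $G^\p$-condition and recurse on $(x,z)$ and $(z,y)$---is exactly the construction the paper uses in Lemma~\ref{detDescLem}. What you are missing is the observation that makes this construction terminate quickly \emph{without any control over the scale of $z$}: the points visited by the final $G^\p$-path, read in order of decreasing radius, automatically form a \emph{toroidal descending chain}, i.e.\ a sequence $(\xi_1,r_1),\ldots,(\xi_m,r_m)$ with $r_1>\cdots>r_m$ and $\xi_{i+1}\in\BT_{r_i}(\xi_i)$. This is immediate from the recursion: the inserted $z=(\zeta,w)$ satisfies $r_y<w<r_x$ and $\zeta\in\BT_{r_x}(\xi_x)$ (and, going one level deeper, every inserted point lies in the ball of the point with the next-larger radius). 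Consequently the hop count is bounded by the longest descending chain present in $X^{(n)}$, and the paper then bounds that directly (Lemma~\ref{probDescChainLem}) by a moment estimate: the expected number of $m$-step descending chains with radii in $(t_0,n^{3/4})$ is at most $n^d(4d\kappa_d\beta\log n)^m/m!$, which by Stirling vanishes for $m=c\log n$ with $c$ large, and chains involving very small or very large radii are handled separately. No balancing of scales is required.

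The balancing idea, by contrast, has a gap that you already half-identify and that cannot be repaired along the lines you sketch. In the critical regime $s=d$ the expected number of points of $X^{(n)}$ inside $\BT_{r_x}(\xi_x)$ that cover $\eta_y$ and have radius in a fixed dyadic window is only of constant order, so the probability that a prescribed window---or even a window of polyloglog width---contains no usable intermediate is bounded away from zero. The per-pair failure probability is therefore not polynomially small (one would need failure probability $o(n^{-2d})$, which would require checking a window of width on the order of $\log n$ rather than polyloglog, defeating the purpose of balancing). The union bound over the $\Theta(n^{2d})$ adjacent pairs does not close, and the promised $O(\log\log n)$ recursion depth is not attained. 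The descending-chain argument sidesteps all of this because it imposes no condition whatsoever on the radii of the chosen intermediates.
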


\section{Proofs of Theorems~\ref{outEdgeThm},~\ref{inEdgeThm} and ~\ref{asyGrowProp}}
\label{sfSec}
In the present section, we investigate the asymptotic distributions of the sum of all outgoing and of the sum of all incoming power-weighted edge lengths at a typical vertex. First, in Theorem~\ref{outEdgeThm}, we consider the case of outgoing edges. The proof is essentially based on the observation that conditioned on $R^*$ the random variable $D^{(\alpha)}_{\out}$ concentrates around its conditional mean.

\begin{proof}[Proof of Theorem~\ref{outEdgeThm}]
Using the canonical coupling between the Poisson point process $X^{(n)}$ on the torus $\T_n$ and the Poisson point process $X$ on $\R^d$ we deduce that
$$\P\big(D^{(\alpha)}_{\mathsf{out},n}\ne D^{(\alpha)}_{\out}\big)\le \P(R^*>n/2),$$
and the latter probability tends to $0$ as $n\to\infty$. In order to determine the tail behavior of the random variable $D^{(\alpha)}_{\out}$, for any $r>0$ we put 
$$D_r=\sum_{(\xi,t)\in X}|\xi|^\alpha1_{B_r(o)}(\xi),$$
noting that $D_{R^*}=D^{(\alpha)}_{\out}$.
\Ip, 
\begin{align}
\label{inEdgeEq1}
t^{s/(\alpha+d)}p^{(\alpha)}_{\out,t}=\int_0^\infty t^{s/(\alpha+d)}\P(D_r>t)\P_R(\d r),
\end{align}
where $\P_R$ denotes the distribution of the random variable $R$. Let $\varepsilon>0$ be arbitrary. In order to analyze~\eqref{inEdgeEq1}, we first consider the case where $t\ge (v(\varepsilon)^{-1}r)^{\alpha+d}$, i.e., where $r\le v(\varepsilon) t^{1/(\alpha+d)}$, writing $v(\varepsilon)=(1+\varepsilon)^{-1}(d\kappa_d/(\alpha+d))^{-1/(\alpha+d)}$. Let $N_r$ be a Poissonian random variable with mean $\kappa_dr^d$, where $r>0$. Additionally, $\{U_i\}_{i\ge1}$ be an iid sequence of random vectors that are independent of $N_r$ and uniformly distributed in $B_1(o)$, \sot $D_r=r^\alpha\sum_{i=1}^{N_r} |U_i|^{\alpha}$. First, 
$$\P(D_r>t)\le \P(N_r\ge n_{r,t})+\P\Big(\sum_{i=1}^{n_{r,t}}r^{\alpha}|U_i|^{\alpha}\ge t\Big),$$
where $n_{r,t}=\lfloor tr^{-\alpha}\kappa_dv(\varepsilon)^{\alpha+d}(1+\varepsilon)^{(\alpha+d)/2}\rfloor$. Note that for sufficiently large values of $tr^{-\alpha}$ we have
$$n_{r,t}/(\kappa_dr^d)\ge t(v(\varepsilon)r^{-1})^{\alpha+d}(1+\varepsilon)^{(\alpha+d)/4}\ge (1+\varepsilon)^{(\alpha+d)/4},$$
\sot the Poisson concentration property~\cite[Lemma 1.2]{penrose} implies that $\sup_{r\le v(\varepsilon) t^{1/(\alpha+d)}} \P(N_r\ge n_{r,t})$
decays at least exponentially fast in $t^{d/(\alpha+d)}$ as $t\to\infty$. Similarly, taking into account $\E|U_1|^\alpha=d/(\alpha+d)$, we obtain that 
$$tr^{-\alpha}/(n_{r,t}\E |U_1|^\alpha)\ge(1+\varepsilon)^{(\alpha+d)/2},$$
\sot using the classical theory of large deviations shows that also the expression
$$\sup_{r\le v(\varepsilon) t^{1/(\alpha+d)}}\P\Big(\sum_{i=1}^{n_{r,t}}r^{\alpha}|U_i|^{\alpha}\ge t\Big)$$
decays at least exponentially fast in $t^{d/(\alpha+d)}$ as $t\to\infty$. \Ip, 
\begin{align}
\label{outEdgeEq1}
\lim_{t\to\infty} t^{s/(\alpha+d)}p^{(\alpha)}_{\out,t}=\lim_{t\to\infty}\int_{v(\varepsilon)t^{1/(\alpha+d)}}^\infty t^{s/(\alpha+d)}\P(D_r>t)\P_{R}(\d r),
\end{align}
provided that the latter limit exists and is finite. In order to compute the right-hand side in~\eqref{outEdgeEq1}, we split the integral into three parts, which are analyzed separately. To be more precise, put
\begin{align*}
I_1&=\int_{v(\varepsilon)t^{1/(\alpha+d)}}^{v(-\varepsilon)t^{1/(\alpha+d)}}t^{s/(\alpha+d)}\P(D_r>t)\P_R(\d r),\\
I_{2,1}&=-\int_{v(-\varepsilon)t^{1/(\alpha+d)}}^\infty t^{s/(\alpha+d)}\P(D_r\le t)\P_R(\d r),
\end{align*}
and
$$I_{2,2}=\int_{v(-\varepsilon)t^{1/(\alpha+d)}}^\infty t^{s/(\alpha+d)}\P_R(\d r)=t^{s/(\alpha+d)}\P(R>v(-\varepsilon)t^{1/(\alpha+d)}).$$
First, note that $I_{2,2}$ tends to $v(-\varepsilon)^{-s}\beta$ as $t\to\infty$ and the latter expression tends to $(d\kappa_d/(\alpha+d))^{s/(\alpha+d)}$ as $\varepsilon\to0$. Hence, it suffices to show that the integrals $I_1$ and $I_{2,1}$ tend to $0$ as we let first $t\to\infty$ and then $\varepsilon\to0$. Indeed, 
$$I_1\le t^{s/(\alpha+d)}\big(\P(R>{v(\varepsilon)t^{1/(\alpha+d)}})-\P(R>{v(-\varepsilon)t^{1/(\alpha+d)}})\big),$$
and the right-hand side tends to $\beta(v(\varepsilon)^{-s}-v(-\varepsilon)^{-s})$ as $t\to\infty$, which vanishes as $\varepsilon\to0$. Finally, we observe that 
$$-I_{2,1}\le t^{s/(\alpha+d)}\P(D_{v(-\varepsilon)t^{1/(\alpha+d)}}\le t)\P(R>v(-\varepsilon)t^{1/(\alpha+d)}).$$
We conclude the proof of the theorem by noting that the expression $t^{s/(\alpha+d)}\P(R>v(-\varepsilon)t^{1/(\alpha+d)})$ remains bounded as $t\to\infty$, whereas $\P(D_{v(-\varepsilon)t^{1/(\alpha+d)}}\le t)$ tends to $0$. Indeed, $D_{v(-\varepsilon)t^{1/(\alpha+d)}}$ has expectation $(1-\varepsilon)^{-(\alpha+d)}t$ and variance
$$v(-\varepsilon)^{2\alpha+d}t^{(2\alpha+d)/(\alpha+d)} v_{2\alpha}^{\alpha+d},$$
 \sot the latter claim follows from the Chebyshev inequality.
\end{proof}
The proof of Theorem~\ref{inEdgeThm} relies on a rather delicate comparison of $D^{(\alpha)}_{\inn}$ and the random variable $\max_{(\xi,r)\in X}|\xi|^\alpha1_{B_r(o)}(\xi)$.

\begin{proof}[Proof of Theorem~\ref{inEdgeThm}]
First, observe that the in-degree of the origin $D^{(0)}_{\inn,n}$ is a Poissonian random variable with mean 
$$\int_{\T_n}\P(R>|\xi|)\d\xi,$$
which is at least ${d\kappa_d}\int_0^{n/2} r^{d-1}\P(R>r)\d r.$
If $s\le d$, then this lower bound tends to $\infty$ as $n\to\infty$. Using $D^{(\alpha)}_{\inn,n}\le D^{(\alpha)}_{\inn} $, this shows that $\P\big(D^{(\alpha)}_{\inn}=\infty\big)=1$ and that $(D^{(\alpha)}_{\inn,n})_{n\ge1}$ converge in distribution to $D^{(\alpha)}_{\inn}$. In the following, we may therefore assume that $s>d$. To show the assertion on the convergence in distribution, we proceed as in the proof of Theorem~\ref{inEdgeThm}. Indeed,
using the canonical coupling between the $[0,\infty)$-marked Poisson point process $X^{(n)}$ on the torus $\T_n$ and the $[0,\infty)$-marked Poisson point process $X$ on $\R^d$, we deduce that
$$\P(D^{(\alpha)}_{\inn,n}\ne D^{(\alpha)}_{\inn})\le \P(X\cap S\setminus ( B_{n/2}(o)\times[0,\infty))\ne\es),$$
where 
$$S=\{(\xi,r)\in\R^d\times[0,\infty):\, r>|\xi|\}.$$
Note that $\#(X\cap S\setminus (B_{n/2}(o)\times[0,\infty)))$ is Poissonian with mean
\begin{align*}
\int_{\R^d\setminus B_{n/2}(o)}\P(R>|\xi|) \d \xi&={d\kappa_d} \int_{n/2}^\infty r^{d-1}\P(R>r) \d r.
\end{align*}
The assumption $s>d$ implies that the latter integral tends to $0$ as $n\to\infty$. We also observe that by definition, $D^{(0)}_{\inn}$ is a Poissonian random variable with mean $\E\#(X\cap S)$. \Mo, this mean can be expressed as
\begin{align*}
\int_{\R^d}\P(R>|\xi|) \d \xi={d\kappa_d} \int_{0}^\infty r^{d-1}\P(R>r)\d r=\kappa_d \E R^d.
\end{align*}
It remains to determine the tail behavior of $D^{(\alpha)}_{\inn}$, for $\alpha>0$. The derivation of the lower bound is based on the elementary relation
\begin{align}
\P(X\cap S\setminus( B_{t^{1/\alpha}}(o)\times[0,\infty))\ne\es)=\P\(\max_{(\xi,r)\in X\cap S}|\xi|^\alpha\ge t\)\le\P\big(D^{(\alpha)}_{\inn}\ge t\big).
\end{align}
Let $\varepsilon>0$ be arbitrary. Note that the random variable $\#(X\cap S\setminus( B_{t^{1/\alpha}}(o)\times[0,\infty)))$ is Poissonian with mean
\begin{align*}
\int_{\R^d\setminus B_{t^{1/\alpha}}(o)}\P(R\ge|\xi|)\d\xi={d\kappa_d}\int_{t^{1/\alpha}}^\infty r^{d-1}\P(R\ge r)\d r,
\end{align*}
and observe that \fa $t>t_0(\varepsilon)$ the latter expression is bounded from below by
\begin{align*}
(1-\varepsilon){d\kappa_d}\beta\int_{t^{1/\alpha}}^\infty r^{d-s-1}\d r&=(1-\varepsilon)(s-d)^{-1}{d\kappa_d}\beta  {t^{(d-s)/\alpha}}.
\end{align*}
Therefore, 
\begin{align*}
\P\big(D^{(\alpha)}_{\inn}\ge t\big)&\ge1-\exp(-(1-\varepsilon)(s-d)^{-1}{d\kappa_d}\beta{t^{(d-s)/\alpha}})\\
&\ge(1-2\varepsilon)(s-d)^{-1}d\kappa_d\beta{t^{(d-s)/\alpha}},
\end{align*}
provided that $t>0$ is \suf large.
For the upper bound, we need more refined arguments, since in principle $D^{(\alpha)}_{\inn}$ could be larger than $t$, even if $\max_{(\xi,r)\in X\cap S}|\xi|^\alpha\le t$. To achieve the desired upper bound we show, loosely speaking, that the behavior of the sum $D^{(\alpha)}_{\inn}$ is already determined by its largest summand. Let $\varepsilon\in(0,1)$ be arbitrary. Then our upper bound is based on the inequality
\begin{align*}
\P(D^{(\alpha)}_{\inn}>t)&\le \P\(\max_{(\xi,r)\in X\cap S}|\xi|\ge(t(1-\varepsilon))^{1/\alpha}\)+\P(\#(X\cap S)\ge \varepsilon t^{1/4})\\
&+\P(\#(X\cap S\setminus (B_{t^{3/(4\alpha)}}(o)\times[0,\infty)))\ge 2),
\end{align*}
and in the rest of the proof the three summands on the right-hand side are considered separately. Using similar bounds as in the derivation of the lower bounds, we obtain that 
$$\lim_{t\to\infty}t^{(s-d)/\alpha}\P\(\max_{(\xi,r)\in X\cap S}|\xi|\ge(t(1-\varepsilon))^{1/\alpha}\)=d\kappa_d \beta(s-d)^{-1}((1-\varepsilon))^{(d-s)/\alpha}.$$
\Fm, $\P(\#(X\cap S)\ge \varepsilon t^{1/4})$ tends to $0$ exponentially fast in $t^{1/4}$ since $\#(X\cap S)$ is a Poissonian random variable with finite mean.

Finally, $\#(X\cap S\setminus (B_{t^{3/(4\alpha)}}(o)\times[0,\infty)))$ is a Poissonian random variable whose mean is at most $2d\kappa_d \beta(s-d)^{-1}t^{3(d-s)/(4\alpha)}$. Hence, the Poisson concentration property yields
\begin{align*}
\P(\#(X\cap S\setminus (B_{t^{3/(4\alpha)}}(o)\times[0,\infty)))\ge 2)\in O(t^{-3(s-d)/(2\alpha)}).
\end{align*}
Since $\varepsilon>0$ was arbitrary, this completes the proof of Theorem~\ref{inEdgeThm}.
\end{proof}
Theorems~\ref{outEdgeThm} and~\ref{inEdgeThm} show that on the distributional level, there is a substantial difference between power-weighted sums of outgoing and incoming edge lengths. However, when moving to the level of expectations, these differences disappear. In fact, the equality of expectations is an immediate consequence of Slivnyak's theorem, and is true for a much more general class of Poisson-based random geometric graphs. Still, for the convenience of the reader, we present some details.
\begin{proposition}
Let $\alpha>0$ be arbitrary. Then $\E D^{(\alpha)}_{\out,n}=\E D^{(\alpha)}_{\inn,n}$.
\end{proposition}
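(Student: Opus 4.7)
The plan is to compute both $\E D^{(\alpha)}_{\out,n}$ and $\E D^{(\alpha)}_{\inn,n}$ directly from the Campbell formula for the marked Poisson process $X^{(n)}$ and to observe that the two expressions reduce to the same deterministic integral over $\T_n$. No concentration, coupling, or tail analysis is required; the argument is purely a bookkeeping exercise tracking which piece of randomness plays the role of the integration variable and which plays the role of the mark.

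For the outgoing sum I would first condition on $R^*$. Since $R^*$ is independent of $X^{(n)}$ and the indicator $1_{\BT_{R^*}(o)}(\xi)$ does not depend on the Poisson mark $r$, the Campbell formula on $\T_n\times[0,\infty)$ together with a trivial integration over $r$ gives
\begin{align*}
\E D^{(\alpha)}_{\out,n} = \E\int_{\T_n}|\xi|^\alpha 1_{\BT_{R^*}(o)}(\xi)\,\d\xi = \int_{\T_n}|\xi|^\alpha\,\P(R\ge|\xi|)\,\d\xi,
\end{align*}
after pulling the expectation over $R^*$ inside the deterministic integral. For the incoming sum the analogous Campbell calculation applied to the integrand $|\xi|^\alpha 1_{\BT_r(o)}(\xi)$, which \emph{does} depend on the Poisson mark, yields
\begin{align*}
\E D^{(\alpha)}_{\inn,n} = \int_{\T_n}|\xi|^\alpha\int_0^\infty 1_{\BT_r(o)}(\xi)\,\P_R(\d r)\,\d\xi = \int_{\T_n}|\xi|^\alpha\,\P(R\ge|\xi|)\,\d\xi,
\end{align*}
this time by integrating out $r$ against $\P_R$.

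The two right-hand sides visibly coincide, and the claim follows. The only ``step'' that might reasonably be called an obstacle is conceptual rather than computational: it is the recognition, as indicated in the paragraph preceding the statement, that this identity is a manifestation of Slivnyak's theorem. Augmenting $X^{(n)}$ by the independent marked point $(o,R^*)$ produces the Palm version of the marked Poisson process at $o$, so the distinguished ``ego'' vertex and a generic Poisson vertex can be interchanged in any weighted edge count; what formally differs between the two sums is which mark --- $R^*$ in the outgoing case, $r$ in the incoming case --- carries the constraint $\cdot\ge|\xi|$, but after marginalization both reduce to the same tail $\P(R\ge|\xi|)$.
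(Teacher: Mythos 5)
Your computation is correct, but it takes a genuinely different route from the paper. The paper never computes either expectation: it applies Slivnyak's theorem to rewrite $n^d\E D^{(\alpha)}_{\out,n}$ as the expectation of the double sum $\sum_{(\xi,r)}\sum_{(\eta,t)}\d_{\T_n}(\xi,\eta)^\alpha 1_{((\xi,r),(\eta,t))\in G(X^{(n)})}$ over all ordered pairs of points, interchanges the two summation indices, and reads the result back as $n^d\E D^{(\alpha)}_{\inn,n}$. That argument uses only the adjacency indicator, not its specific form, which is exactly why the paper remarks that the identity holds for a much more general class of Poisson-based geometric graphs. Your Campbell-formula computation, by contrast, produces the explicit common value $\int_{\T_n}|\xi|^\alpha\P(R\ge|\xi|)\,\d\xi$ for both sides, which is more concrete (and in fact is the same integral the paper evaluates later in the proof of Theorem~\ref{asyGrowProp}), at the cost of relying on the particular indicator structure of the scale-free Gilbert graph. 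Both proofs are valid; yours trades generality for explicitness.
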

\begin{proof}
For $(\xi,r),(\eta,t)\in X^{(n)}$ we write $((\xi,r),(\eta,t))\in G(X^{(n)})$ if there is a directed edge from $(\xi,r)$ to $(\eta,t)$ in the graph $G(X^{(n)})$. 
Then an application of Slivnyak's theorem yields
\begin{align*}
n^d\E D^{(\alpha)}_{\out,n}&= \E\sum_{(\xi,r)\in X^{(n)}}\sum_{(\eta,t)\in X^{(n)}}\d_{\T_n}(\xi,\eta)^{\alpha}1_{((\xi,r),(\eta,t))\in G(X^{(n)})}\\
&= \E\sum_{(\eta,t)\in X^{(n)}}\sum_{(\xi,r)\in X^{(n)}}\d_{\T_n}(\xi,\eta)^{\alpha}1_{((\xi,r),(\eta,t))\in G(X^{(n)})}\\
&=n^d\E D^{(\alpha)}_{\inn,n}.
\end{align*}
\end{proof}
Now, we can proceed with the proof of Theorem~\ref{asyGrowProp}.
\begin{proof}[Proof of Theorem~\ref{asyGrowProp}]
We begin with part (i). Noting that for every $n\ge1$ the random variable $D^{(\alpha)}_{\inn,n}$ is stochastically dominated by the random variable $D^{(\alpha)}_{\inn}$ and that the random variables $\{D^{(\alpha)}_{\inn,n}\}$ converge to $D^{(\alpha)}_{\inn}$ in distribution, it suffices to show that $\E D^{(\alpha)}_{\inn}=d\kappa_d(\alpha+d)^{-1}\E R^{\alpha+d}$. Indeed, Campbell's formula implies that 
\begin{align*}
\E D^{(\alpha)}_{\inn}&=\int_{\R^d}|\xi|^\alpha \P(R>|\xi|)\d \xi= d\kappa_d \int_0^\infty r^{\alpha+d-1}\P(R>r)\d r=d\kappa_d(\alpha+d)^{-1}\E R^{\alpha+d}.
\end{align*}
For parts (ii) and (iii) we can also proceed by applying Campbell's formula. Indeed, let $\varepsilon>0$ be arbitrary and choose $t_0=t_0(\varepsilon)$ as in Section~\ref{htggSec}. For part (ii), we first obtain that the expressions
\begin{align*}
(\log n)^{-1}\int_{ \BT_{t_0}(o)}|\xi|^{\alpha}\P(R>|\xi|)\d \xi
\end{align*}
and
\begin{align*}
(\log n)^{-1}\int_{\T_n\setminus \BT_{n/2}(o)}|\xi|^{\alpha}\P(R>|\xi|)\d \xi&\le 2\beta(\log n)^{-1} \int_{\T_1\setminus B^{\T_1}_{1/2}(o)}|\eta|^{\alpha-s}\d \eta
\end{align*}
tend to $0$ as $n\to\infty$. \Fm,
\begin{align*}
(\log n)^{-1}\int_{\BT_{n/2}(o)\setminus \BT_{t_0}(o)}|\xi|^{\alpha}\P(R>|\xi|)\d \xi&\le (1+\varepsilon)(\log n)^{-1}d\kappa_d\beta \int_{t_0}^{n/2}r^{-1}\d r\\
&=(1+\varepsilon)(\log n)^{-1}d\kappa_d\beta (\log n-\log 2 -\log t_0).
\end{align*}
Since an analogous argument gives the corresponding lower bound, this completes the proof of part (ii). It remains to deal with part (iii). Proceeding similarly to part (ii), we first note that 
\begin{align*}
n^{s-\alpha-d}\int_{\BT_{t_0}(o)}|\xi|^{\alpha}\P(R>|\xi|)\d \xi
\end{align*}
tends to $0$ as $n\to\infty$. \Fm,
\begin{align*}
n^{s-\alpha-d}\int_{\T_n\setminus \BT_{t_0}(o)}|\xi|^{\alpha}\P(R>|\xi|)\d \xi&\le (1+\varepsilon)\beta\int_{\T_1\setminus B^{\T_1}_{t_0/n}(o)}|\eta|^{\alpha-s}\d \eta,
\end{align*}
and the right-hand side tends to $(1+\varepsilon)\beta\int_{[-1/2,1/2]^d}|\eta|^{\alpha-s}\d\eta$ as $n\to\infty$. Again, since the lower bound can be obtained using similar arguments, this completes the proof of part (iii).
\end{proof}

\section{Chemical distances}
\label{chemDistSec}
In the present section we investigate the behavior of chemical distances (i.e., shortest-path lengths) on scale-free Gilbert graphs. We have already mentioned in the introduction that the regime, where $s<d$ might be of limited interest, as the diameter is at most $2$ whp. 
\subsection{Regime $s>d$}
\label{subCritReg}
In the present subsection, we consider the regime $s>d$ whose connectivity properties turn out to be rather similar to those of the Boolean model with light-tailed radii. For instance, we note that isolated vertices may occur with positive probability.
\begin{proposition}
\label{nonConnLem}
Assume that $s>d$. Then 
$$\lim_{n\to\infty}\P(o\text{ is isolated in }G(X^{(n)}\cup\{(o,R^*)\}))>0.$$
\end{proposition}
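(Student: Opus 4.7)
The plan is to exhibit a uniform (in $n$) positive lower bound on the isolation probability by rewriting it as a Poisson void probability. Conditioning on $R^*=r^*$, the vertex $(o,r^*)$ is isolated in $G(X^{(n)}\cup\{(o,r^*)\})$ precisely when $X^{(n)}\cap A_{n,r^*}=\es$, where
$$A_{n,r^*}=\bigl(\BT_{r^*}(o)\times[0,\infty)\bigr)\cup\bigl\{(\xi,r)\in\T_n\times[0,\infty):r\ge|\xi|\bigr\}$$
collects, respectively, the marks that would receive an outgoing edge from $(o,r^*)$ and those that would send an incoming edge to it. By the void probability of the Poisson point process,
$$\P\bigl(o\text{ isolated}\mid R^*=r^*\bigr)=\exp\bigl(-\mu_n(r^*)\bigr),$$
where $\mu_n(r^*)$ denotes the $\mathrm{Leb}\otimes\P_R$-measure of $A_{n,r^*}$.

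To bound $\mu_n(r^*)$ I would use a union bound on the two constituents of $A_{n,r^*}$: on the event $\{r^*\le n/2\}$, so that the torus ball embeds isometrically in the Euclidean ball, one has
$$\mu_n(r^*)\le \kappa_d(r^*)^d+\int_{\R^d}\P(R>|\xi|)\,\d\xi=\kappa_d(r^*)^d+\kappa_d\E R^d.$$
The last quantity is finite because $\P(R>t)\sim\beta t^{-s}$ together with $s>d$ yields $\E R^d<\infty$. This is the only place where the assumption $s>d$ enters.

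Combining the two displays and applying monotone convergence gives
$$\liminf_{n\to\infty}\P\bigl(o\text{ isolated in }G(X^{(n)}\cup\{(o,R^*)\})\bigr)\ge \E\bigl[\exp\bigl(-\kappa_d(R^*)^d-\kappa_d\E R^d\bigr)\bigr]>0,$$
the positivity being immediate since the integrand is strictly positive and bounded by $1$. There is no substantive obstacle: the entire content of the proposition is that the regime $s>d$ keeps the expected number of potential neighbours of $(o,R^*)$ finite, after which a one-line Poisson void computation delivers the conclusion.
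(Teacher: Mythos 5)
Your argument is correct in substance and takes a genuinely different, more self-contained route than the paper. The paper first establishes (via the same coupling used in the proofs of Theorems~\ref{outEdgeThm} and~\ref{inEdgeThm}) that the isolation probabilities converge as $n\to\infty$, and then lower-bounds the limit by the FKG inequality: the events $\big\{D^{(0)}_{\inn,n}=0\big\}$ and $\big\{D^{(0)}_{\out,n}=0\big\}$ are decreasing, hence positively correlated, giving $\P(o\text{ isolated})\ge\P\big(D^{(0)}_{\inn,n}=0\big)\P\big(D^{(0)}_{\out,n}=0\big)$, with both factors having positive limits by Theorems~\ref{outEdgeThm} and~\ref{inEdgeThm}. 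You instead write the isolation event directly as a Poisson void event over $A_{n,r^*}$ and apply subadditivity of the intensity measure. Interestingly, the union bound on $\mu_n(r^*)$ yields exactly the same numerical lower bound as the FKG product---the two constituents of $A_{n,r^*}$ correspond precisely to the events $\big\{D^{(0)}_{\out,n}=0\big\}$ and $\big\{D^{(0)}_{\inn,n}=0\big\}$ and the product of their void probabilities---so your computation shows that FKG is not actually needed here. Both routes correctly isolate $s>d$ as the source of $\E R^d<\infty$.

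One small gap: you only establish that the $\liminf$ is positive, whereas the proposition asserts that the \emph{limit} exists and is positive. The paper disposes of this up front by the coupling argument. In your framework the existence of the limit follows just as easily once you observe that $\mu_n(r^*)$ converges pointwise in $r^*$: for $n\ge 2r^*$ the ball contribution is exactly $\kappa_d(r^*)^d$, the radial part $\int_{B_{n/2}(o)\setminus B_{r^*}(o)}\P(R\ge|\xi|)\,\d\xi$ increases to its finite $\R^d$ limit, and the corner contribution from $[-n/2,n/2]^d\setminus B_{n/2}(o)$ is at most $n^d\P(R>n/2)\to0$ because $s>d$. Dominated convergence with constant majorant $1$ then gives convergence of $\E[\exp(-\mu_n(R^*))]$. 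A sentence to that effect closes the gap.
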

\begin{proof}
First, we note that similar arguments as in Theorems~\ref{outEdgeThm} and~\ref{inEdgeThm} can be used to show that the probabilities $\P(o\text{ is isolated in }G(X^{(n)}\cup\{(o,R^*)\}))$ converge as $n\to\infty$. \Mo, the events $\big\{D^{(0)}_{\inn,n}=0\big\}$ and $\big\{D^{(0)}_{\out,n}=0\big\}$ are both decreasing events, \sot the FKG inequality (see, e.g.~\cite[Theorem 1.4]{fock}) implies that they are positively correlated. Hence,
\begin{align*}
\P(o\text{ is isolated in }G(X^{(n)}\cup\{(o,R^*)\}))&\ge\P\big(D^{(0)}_{\inn,n}=0\big)\P\big(D^{(0)}_{\out,n}=0\big),
\end{align*}
\sot using Theorems~\ref{outEdgeThm} and~\ref{inEdgeThm} completes the proof.
\end{proof}

Next, we prove Theorem~\ref{subCrit}, which shows that allowing random radii with tail index $s>d$ does not reduce substantially chemical distances in comparison to the case of constant radii. To be more precise, the reduction amounts at most to a sublogarithmic factor. The key idea for the proof of Theorem~\ref{subCrit} is to analyze the connections in the graph $G(X^{(n)})$ at different length scales. Although at every scale the presence of long edges can be used to reduce chemical distances, still such shortcuts are sufficiently rare to yield only a sublogarithmic reduction factor in comparison to the linear growth rate $n$.
Before going into the (technical) details, we provide a rough sketch of the proof. Put $p=(s+d)/(2s)$, and subdivide the torus $\T_n$ into $k_1=n^{(1-p)d}$ subcubes $Q_1,\ldots,Q_{k_1}$ of side length $n^p$. The probability that \tes $(\xi,r)\in X^{(n)}$ with $r\ge n^p$ is of order at most $n^dn^{-sp}=n^{-(s-d)/2}$. On the other hand, if such a point does not exist, then the endpoints of any edge in $G(X^{(n)})$ are contained in adjacent subcubes. Hence, the number of subcubes that we need to visit if we move from $q(-n\mathsf{e}_1/4)$ to $q(n\mathsf{e}_1/4)$ is at least of order $n^{1-p}$. 

Now, we continue by subdividing each of the subcubes $Q_1,\ldots,Q_{k_1}$ into subsubcubes of side length $n^{p^2}$. For each of the subcubes $Q_i$ the probability that \tes $(\xi,r)\in X^{(n)}\cap Q_i$ with $r\ge n^{p^2}$ is of order at most $n^{pd}n^{-sp^2}=n^{-(s-d)p/2}$. The part of the path connecting $q(-n\mathsf{e}_1/4)$ and $q(n\mathsf{e}_1/4)$ in any such subcube is a nearest-neighbor path on the level of subsubcubes, \sot typically $n^{p-p^2}$ steps are needed to cross that subcube. If this was true \fa $i\in\{1,\ldots,k_1\}$, then we would obtain a lower bound for the chemical distance between $q(-n\mathsf{e}_1/4)$ and $q(n\mathsf{e}_1/4)$ that is of order $n^{1-p^2}$ and continuing in this fashion, we would obtain in fact a lower bound that is linear in $n$. However, at each level we have to deal with a small loss, which leads to the sublogarithmic correction term in the final lower bound.

In order to make this argument precise, it is convenient to introduce some notation similar to that used in fractal percolation, see~\cite{falcGrim}. In order to define precisely the iteration mentioned in the previous paragraph, we need to ensure that at each layer the number of subcubes is an integer. Therefore, we define $a_0=n$, and then, inductively, $a_k=a_{k-1}/\lfloor a_{k-1}/n^{p^k}\rfloor$. We also put $b_k=a_{k-1}/a_k$. Next, in order to determine the position of subcubes in the $k$th layer, we use the index set 
$$J_k=\{(i_1,\ldots, i_k)\in (\Z^d)^k:\, i_j\in\{0,\ldots, b_j-1\}^d\text{ for all }j\in\{1,\ldots,k\}\}.$$
For $k\ge1$ and $I=(i_1,\ldots,i_k)\in J_k$, we define the site $z_I=a_1i_1+\cdots+a_ki_k$ and the cube 
$$Q_{I}=(-n/2,\ldots,-n/2)+z_{I}+[0,1]^da_k,$$ 
which is also called a $k$\emph{-cube}. Note that we think of the $Q_I$ as being embedded in the torus $\T_n$ \sot it is possible that $Q_I\cap Q_{I^\p}\ne\es$, even if the $\d_\infty$-distance between $z_I$ and $z_{I^\p}$ is strictly larger than $a_k$. We say that $I,I^\p\in J_k$ are $*$-connected if \te $I_1=I,\ldots,I_m=I^\p$ \st $Q_{I_j}\cap Q_{I_{j+1}}\ne\es$ \fa $j\in\{1,\ldots,m-1\}$. In the following, we frequently consider certain neighborhoods of cubes of the form $Q_I$ for some $I\in J_k$. To be more precise, for $r>0$ we denote by $Q_I^r=\{\xi\in\T_n: \d^{\T_n}_\infty(\xi,Q_I)\le r\}$ the subset of all $\xi\in\T_n$ \st the toroidal $\d_\infty$-distance from $\xi$ to the cube $Q_I$ is at most $r$.

Next, we need to capture the property that a path $\gamma$ in $G(X^{(n)})$ that starts in a cube $Q_I$ needs a large number of hops to move far away from this cube. To be more precise, for $\varepsilon>0$ and $k\ge1$ we introduce the notion of $(\varepsilon,k)$-good indices. If $I\in J_k$, then $I$ is always $(\varepsilon,k)$-good. \Fm, inductively, if $I\in J_{k^\p}$ is \st $0\le k^\p<k$, then we say that $I$ is $(\varepsilon,k)$-good if a) $X^{(n)}\cap (Q_I\times[a_{k^\p+1},a_{k^\p}])=\es$ and b) for every $*$-connected subset $\gamma\subset J_{k^\p+1}$ that is contained in $Q^{a_{k^\p}}_I$ and consists of at least $b_{k^\p+1}/4$ elements, it holds that $\gamma$ contains at most $\varepsilon\#\gamma$ elements that are $(\varepsilon,k)$-bad. If $I=\es$, then we additionally assume that $X^{(n)}\cap (Q_I\times[n,\infty))=\es$. Sometimes, we also say that the cube $Q_I$ is $(\varepsilon,k)$-good if the index $I$ has this property. Note that it would be more intuitive if condition a) required that $X^{(n)}\cap (Q_I\times[a_{k^\p+1},\infty))=\es$. However, the present definition has the advantage that the $(\varepsilon,k)$-goodness of an index $I\in J_{k^\p}$ only depends on $X^{(n)}\cap (\T_n\times[0,a_{k^\p}])$. This property will be helpful in Lemma~\ref{kGoodProbLem} below, where we establish a stochastic domination between the configuration of $(\varepsilon,k)$-good cubes and Bernoulli site percolation.

In the following, it will also be convenient to strengthen the notion of $(\varepsilon,k)$-good cubes in order to have some control over cubes in a suitable environment of a given one. To be more precise, for $u\ge0$ and $k^\p\le k$, we say that $I\in J_{k^\p}$ is $(u,\varepsilon,k)$-good if $I^\p$ is $(\varepsilon,k)$-good \fa $I^\p\in J_{k^\p}$ \st $Q_{I^\p}\subset Q^{ua_{k^\p}}_{I}$. 

Next, in order to analyze a given self-avoiding path $\gamma=(x_1,\ldots,x_m)$ in $G(X^{(n)})$ at different scales, it is useful to introduce certain discretizations of $\gamma$. First, for any $k\ge1$ we define a function $\mu_k:\T_n\times[0,\infty)\to J_k$, where $\mu_k(\xi,r)$ denotes the uniquely determined element $I\in J_k$ satisfying $\xi\in Q_I$. 
Note that in general applying $\mu_k$ to each element of $\gamma$ results in a path that is no longer self-avoiding.
A popular technique for transforming arbitrary paths into self-avoiding ones is Lawler's method of loop erasure~\cite{saw}. Unfortunately, when performing loop erasure naively for discretizations at different scales,
the resulting self-avoiding paths may be quite incomparable with respect to moving from one scale to another. Therefore, we consider a refinement of the standard loop-erasure method, which is adapted to dealing with different scales. For $k\ge1$ define an ordered subset $\gamma^{(k,k,\LE)}$ of $\gamma$, the \emph{$(k,k)$-loop erasure of $\gamma$}, which can be identified with the standard loop erasure of the discretization of $\gamma$ via $\mu_k$. To be more precise, let $j\in\{1,\ldots,m\}$ be the largest integer \st $\mu_k(x_{j-1})=\mu_k(x_1)$. Then define recursively $\gamma^{(k,k,\LE)}=\big(x_1,\gamma_{\mathsf{t}}^{(k,k,\LE)}\big)$, where $\gamma_{\mathsf{t}}=(x_{j},\ldots,x_m)$ is the subpath of $\gamma$ starting from $x_{j}$. Next, suppose that $k^\p<k$ and let $\gamma^{(k^\p+1,k,\LE)}=(x_{m_1},\ldots,x_{m_N})$  be the $(k^\p+1,k)$-loop erasure of $\gamma$. \Ip, $m_1=1$. Choose $j\in\{1,\ldots,N\}$ as the largest integer \st $\mu_{k^\p}(x_{m_{j-1}})=\mu_{k^\p}(x_1)$. Then define recursively $\gamma^{(k^\p,k,\LE)}=\big(x_1,\gamma_{\mathsf{t}}^{(k^\p,k,\LE)}\big)$, where $\gamma_{\mathsf{t}}=(x_{m_j},\ldots,x_m)$ is the subpath of $\gamma$ starting from $x_{m_j}$. The construction of $\gamma^{(k^\p,k,\LE)}$ is illustrated in Figure~\ref{gamkLEFig}.

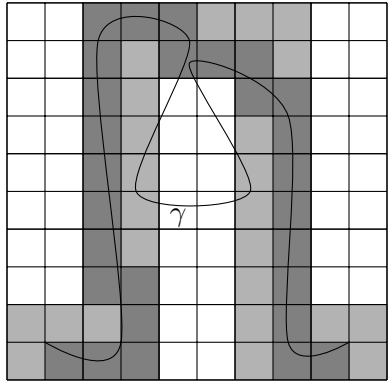
\begin{figure}[!htpb]
\centering
\begin{tikzpicture}[scale=1.0]
\fill[black!30!white] (0,0) rectangle (1,1);
\fill[black!30!white] (1,0) rectangle (2,5);
\fill[black!30!white] (2,4) rectangle (4,5);
\fill[black!30!white] (3,4) rectangle (4,0);
\fill[black!30!white] (4,0) rectangle (5,1);

\fill[black!50!white] (0.5,0) rectangle (1,0.5);
\fill[black!50!white] (1,0) rectangle (1.5,0.5);
\fill[black!50!white] (1.5,0) rectangle (2,1.5);
\fill[black!50!white] (1,1) rectangle (1.5,5.0);
\fill[black!50!white] (1.5,4.5) rectangle (2,5.0);
\fill[black!50!white] (2.0,4.5) rectangle (2.5,5.0);
\fill[black!50!white] (2.0,4) rectangle (2.5,4.5);
\fill[black!50!white] (2.5,4) rectangle (3.0,4.5);
\fill[black!50!white] (3.0,4) rectangle (3.5,4.5);
\fill[black!50!white] (3.0,3.5) rectangle (3.5,4.0);
\fill[black!50!white] (3.5,3.5) rectangle (4.0,4.0);
\fill[black!50!white] (3.5,3.0) rectangle (4.0,3.5);
\fill[black!50!white] (3.5,3.0) rectangle (4.0,0.0);
\fill[black!50!white] (4.0,0.0) rectangle (4.5,0.5);

\coordinate[label=180:$\gamma$] (u) at (2.5,2.15);
\draw plot [smooth,thick,tension=0.5] coordinates {(0.5,0.5) (1.5,0.5) (1.2,4.5) (2.4,4.5) (1.7,2.5) (3.2,2.5) (2.4,4.2) (3.7,3.5) (3.7,0.5) (4.5,0.5)};
\draw[step=1cm] (0,0) grid (5,5);
\draw[step=0.5cm] (0,0) grid (5,5);
\end{tikzpicture}
\caption{Construction of $\gamma^{(k,k,\LE)}$ (dark gray) and $\gamma^{(k-1,k,\LE)}$ (union of light and dark gray)}\label{gamkLEFig}
\end{figure}

Next, we note that if $k^\p\le k$ and $\gamma$ are \st $\gamma^{(k^\p,k,\LE)}$ hits only $(\varepsilon,k)$-good $(k^\p-1)$-cubes, then a large proportion of $k^\p$-cubes in $\gamma^{(k^\p,k,\LE)}$ are $(\varepsilon,k)$-good. 

\begin{lemma}
\label{thinBoundLem}
Let $k^\p\le k$ and $\gamma$ be a path in $G(X^{(n)}\cap(\T_n\times[0,a_{k^\p}]))$ hitting only $(\varepsilon,k)$-good $(k^\p-1)$-cubes. \Fm, assume that $\#\gamma^{(k^\p,k,\LE)}\ge b_{k^\p}/4$.
Then, the number of $(3,\varepsilon,k)$-bad $k^\p$-cubes hit by $\gamma^{(k^\p,k,\LE)}$ is at most $147^d\varepsilon \#\gamma^{(k^\p,k,\LE)}$.
\end{lemma}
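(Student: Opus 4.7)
The strategy is to decompose $\gamma^{(k^\p,k,\LE)}$ into short $*$-connected chunks, each contained in the $a_{k^\p-1}$-neighborhood of some $(k^\p-1)$-cube hit by $\gamma$, and then to apply the hypothesized $(\varepsilon,k)$-goodness of those $(k^\p-1)$-cubes to count $(\varepsilon,k)$-bad neighbors.

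A first preliminary step is to verify that the sequence of $k^\p$-cubes $(\mu_{k^\p}(y):y\in\gamma^{(k^\p,k,\LE)})$ is a self-avoiding, $*$-adjacent path in $J_{k^\p}$. Self-avoidance is built into the recursive loop-erasure construction. For the $*$-adjacency I argue by induction on $k-k^\p$: since all radii along $\gamma$ are at most $a_{k^\p}$, consecutive vertices of $\gamma$ lie within Euclidean distance $a_{k^\p}$ and hence in equal or $*$-adjacent $k^\p$-cubes; this property passes through from the $(k^\p+1,k)$- to the $(k^\p,k)$-loop erasure because two $*$-adjacent cubes at level $k^\p+1$ are automatically contained in $*$-adjacent (or equal) cubes at the coarser level $k^\p$.

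Write $\gamma^{(k^\p,k,\LE)}=(I_1,\ldots,I_N)$ with $N\ge b_{k^\p}/4$. I partition $\{1,\ldots,N\}$ into consecutive blocks of length $m:=\lceil b_{k^\p}/4\rceil$, absorbing any terminal remainder of size below $m$ into the preceding block; call the resulting chunks $\Gamma_1,\ldots,\Gamma_L$, so that $b_{k^\p}/4\le \#\Gamma_\ell\le 2m$ for every $\ell$. Each $\Gamma_\ell$, being a consecutive subpath of the $*$-adjacent path, is $*$-connected and has $\d_\infty^{\T_n}$-diameter at most $2m\cdot a_{k^\p}\le a_{k^\p-1}/2+2a_{k^\p}$. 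Letting $J_\ell\in J_{k^\p-1}$ denote the $(k^\p-1)$-cube containing the first element of $\Gamma_\ell$, the $3a_{k^\p}$-thickening
\[
\Gamma_\ell^+:=\bigl\{I\in J_{k^\p}:\,\d_\infty^{\T_n}(Q_I,Q_{I'})\le 3a_{k^\p}\text{ for some }I'\in\Gamma_\ell\bigr\}
\]
is $*$-connected, satisfies $\#\Gamma_\ell^+\le 7^d\,\#\Gamma_\ell$, and for $n$ sufficiently large (so that $b_{k^\p}\ge 16$) is contained in $Q_{J_\ell}^{a_{k^\p-1}}$.

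Since the first element of $\Gamma_\ell$ is hit by $\gamma$, the cube $J_\ell$ is hit by $\gamma$ and therefore $(\varepsilon,k)$-good by hypothesis. Applying condition (b) of $(\varepsilon,k)$-goodness of $J_\ell$ to the $*$-connected set $\Gamma_\ell^+\subset Q_{J_\ell}^{a_{k^\p-1}}$, whose cardinality is at least $b_{k^\p}/4$, yields
\[
\#\bigl\{I\in\Gamma_\ell^+:\,I\text{ is }(\varepsilon,k)\text{-bad}\bigr\}\le\varepsilon\,\#\Gamma_\ell^+\le 7^d\varepsilon\,\#\Gamma_\ell.
\]
Every $(3,\varepsilon,k)$-bad cube $I\in\Gamma_\ell$ has, by definition, an $(\varepsilon,k)$-bad witness $I'$ with $Q_{I'}\subset Q_I^{3a_{k^\p}}$; such $I'$ automatically lies in $\Gamma_\ell^+$, and each $I'$ can serve as witness for at most $7^d$ different cubes $I\in\Gamma_\ell$. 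Combining the two counts and summing over $\ell$ produces at most $7^{2d}\varepsilon\sum_\ell\#\Gamma_\ell=49^d\varepsilon\,\#\gamma^{(k^\p,k,\LE)}\le 147^d\varepsilon\,\#\gamma^{(k^\p,k,\LE)}$ many $(3,\varepsilon,k)$-bad cubes, as required.

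The main obstacle is the inductive $*$-adjacency statement for the nested loop-erasures: one must keep careful track, at each stage, of a witness pair $(z',z'')$ ensuring that the surviving successor lies in a $*$-adjacent cube to its predecessor, and it is crucial here that $\gamma$ is a path in $G(X^{(n)}\cap(\T_n\times[0,a_{k^\p}]))$ so all edges have length at most $a_{k^\p}$. After that, everything is routine geometric bookkeeping with the inclusions $Q_{J_\ell}^{\,\cdot}$ and standard king-move counting.
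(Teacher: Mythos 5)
Your proof is correct and follows the same overall strategy as the paper's---thicken the discretized loop--erasure, apply the $(\varepsilon,k)$-goodness of the ambient $(k^\p-1)$-cubes to large $*$-connected subsets, and control multiplicities in a $7^d$ witness count---but the decomposition differs in a way worth noting. The paper forms $\gamma^+$ by dilating all of $\mu_{k^\p}\big(\gamma^{(k^\p,k,\LE)}\big)$, then for \emph{each} index $j$ takes the $*$-connected component $\gamma_j$ of $\gamma^+\cap Q^{a_{k^\p-1}}_{\mu_{k^\p-1}(x_{m_j})}$, pays a $3^d$ overlap factor for representatives of distinct components, and a $7^d$ factor twice (once for $\#\gamma^+\le 7^d N$, once for witnesses), yielding $147^d$. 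Your partition into disjoint consecutive blocks $\Gamma_\ell$ of size between $b_{k^\p}/4$ and $2\lceil b_{k^\p}/4\rceil$ eliminates the $3^d$ overlap factor, giving the sharper constant $49^d\le 147^d$, and has the further advantage of making the size requirement $\#\Gamma^+_\ell\ge b_{k^\p}/4$ for the goodness condition completely explicit, whereas in the paper the fact that $\#\gamma_j\ge b_{k^\p}/4$ is left implicit (it does hold, because either the path exits $Q^{a_{k^\p-1}}_{\mu_{k^\p-1}(x_{m_j})}$ and therefore traverses at least $b_{k^\p}$ $k^\p$-cubes, or it never leaves that box and then $\gamma_j=\gamma^+\supset\mu_{k^\p}(\gamma^{(k^\p,k,\LE)})$). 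Two small caveats on your version: (a) your block decomposition requires $b_{k^\p}\ge 16$ so that $\Gamma_\ell^+\subset Q_{J_\ell}^{a_{k^\p-1}}$; the paper's argument does not need any lower bound on $b_{k^\p}$, although in the application $b_{k^\p}\to\infty$ so this costs nothing; (b) your preliminary $*$-adjacency claim for $\mu_{k^\p}\big(\gamma^{(k^\p,k,\LE)}\big)$ is indeed true but the stated reason (``passes through from the $(k^\p+1,k)$- to the $(k^\p,k)$-loop erasure'') is not quite the right mechanism, since $\gamma^{(k^\p,k,\LE)}$ is \emph{not} a subsequence of $\gamma^{(k^\p+1,k,\LE)}$; the correct double induction (on $k-k^\p$, then on $\#\gamma$) uses that the first surviving successor $x_{m_j}$ is consecutive in $\gamma^{(k^\p+1,k,\LE)}$ to the deleted element $x_{m_{j-1}}$ whose $\mu_{k^\p}$-image coincides with $\mu_{k^\p}(x_1)$.
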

\begin{proof}
Let $\gamma^{(k^\p,k,\LE)}=(x_{m_1},\ldots,x_{m_N})$. We denote by $\gamma^+$ the family of all $I\in J_{k^\p}$ \st $Q_I\subset Q^{{3a_{k^\p}}}_{\mu_{k^\p}(x_{m_j})}$ for some $j\in\{1,\ldots,N\}$. In other words, $\gamma^+$ is obtained by a suitable dilation from the discretization of $\gamma^{(k^\p,k,\LE)}$. Then the number of $(3,\varepsilon,k)$-bad $k^\p$-cubes hit by $\gamma^{(k^\p,k,\LE)}$ is at most $7^d$ times the number of $(\varepsilon,k)$-bad  $k^\p$-cubes in $\gamma^+$. \Fm, for each $j\in\{1,\ldots,N\}$ denote by $\gamma_j$ the $*$-connected component of $\gamma^+\cap Q^{a_{k^\p-1}}_{\mu_{k^\p-1}(x_{m_j})}$ containing $\mu_{k^\p}(x_{m_j})$. Since $\gamma$ hits only $(\varepsilon,k)$-good $(k^\p-1)$-cubes, we conclude that for every $j\in\{1,\ldots,N\}$, the number of $(\varepsilon,k)$-bad $k^\p$-cubes in $\gamma_j$ is at most $\varepsilon\#\gamma_j$. \Mo, note that for every $j_1,j_2\in \{1,\ldots,N\}$ the components $\gamma_{j_1}$ and $\gamma_{j_2}$ either coincide or are disjoint. In the following, we fix a subset $S\subset \{1,\ldots,N\}$ with the property that a) for every $j\in\{1,\ldots,N\}$ \tes $s\in S$ \st $\gamma_j=\gamma_s$ and b) if $s,s^\p\in S$ are \st $\gamma_s=\gamma_{s^\p}$, then $s=s^\p$. Since the union $\cup_{s\in S}\gamma_s$ covers $\gamma^+$, the number of $(\varepsilon,k)$-bad $k^\p$-cubes in $\gamma^+$ is at most $\sum_{s\in S}\varepsilon\#\gamma_s$. Finally, noting that for each $I\in\gamma^+$ \te at most $3^d$ elements $s\in S$ with $I\in\gamma_s$, we obtain that 
$$\sum_{s\in S}\varepsilon\#\gamma_s\le 3^d\varepsilon \#\gamma^+\le 21^d\varepsilon \#\gamma^{(k^\p,k,\LE)}.$$
This concludes the proof.
\end{proof}

In the following, we use the discretizations $\{\gamma^{(k^\p,k,\LE)}\}_{k^\p\in\{1,\ldots,k\}}$ to derive suitable accurate lower bounds on the number of hops in $\gamma$. The most immediate approach to do this would start from counting the number elements in $\gamma^{(k^\p,k,\LE)}$ and multiplying this number by a suitable factor reflecting the scaling in the $k$th layer.
However, if a $k^\p$-cube is hit by $\gamma^{(k^\p,k,\LE)}$, then this provides only very little information as to how many $(k^\p+1)$-cubes are hit by $\gamma^{(k^\p+1,k,\LE)}$. Another approach could be to measure the Euclidean distance between the endpoints and multiply it by a suitable factor taking into account the scale of discretization. However, also this idea is problematic, since it is not clear how to determine an upper bound for the number of $(\varepsilon,k)$-bad cubes occurring in a path in terms of the distance of the endpoints. Therefore, in order to measure the length of a discretized path, we propose a slightly more refined approach. It is adapted to changing scales, and, moreover, the length still grows at least linearly in the number of elements of a path.

Let $k\ge1$ and $\gamma=(x_1,\ldots,x_m)$ be a self-avoiding path in $J_k$. Let $D=(d_1,d_2,\ldots,d_{m^\p})$ be an ordered subset of $\{2,\ldots,m\}$. We say that $D$ forms an \emph{independent subset} of $\gamma$ if $Q_{I_{d_i-1}}\cap Q_{I_{d_{i+1}}}=\es$ \fa $i\in\{1,\ldots,m^\p-1\}$. By $\lambda(\gamma)$ we denote the \emph{length} of $\gamma$, which is defined as $m^\p_{\mathsf{max}}-1$, where $m^\p_{\mathsf{max}}$ is the maximal size of an independent subset of $\gamma$. At first sight, it might seem unnatural to require $Q_{I_{d_i-1}}\cap Q_{I_{d_{i+1}}}=\es$ instead of $Q_{I_{d_i}}\cap Q_{I_{d_{i+1}}}=\es$. However, when considering a linear arrangement of $m\ge2$ adjacent elements of $J_k$, then the second possibility would lead to a length of at most $\lceil m/2\rceil$, whereas the first yields the more accurate value $m-2$.

In the following, we often consider the case, where $k^\p\le k$ and $\gamma=(x_1,\ldots,x_m)$ is a self-avoiding path in $G(X^{(n)}\cap(\T_n\times[0,a_{k^\p}]))$. Then, an independent subset of $\mu_{k^\p}(\gamma^{(k^\p,k,\LE)})$ is also called a $(k^\p,k)$-\emph{independent subset} of $\gamma$ and we write $\lambda_{k^\p,k}(\gamma)$ for $\lambda(\mu_{k^\p}(\gamma^{(k^\p,k,\LE)}))$, which is called the $(k^\p,k)$-\emph{length} of $\gamma$. Next, we provide an affine lower bound for $(k^\p,k)$-lengths in terms of number of sites.

\begin{lemma}
\label{lengthLowBoundLem}
Let $k^\p\le k$ and $\gamma$ be a self-avoiding path in $G(X^{(n)}\cap(\T_n\times[0,a_{k^\p}]))$. Then,
$$\lambda_{k^\p,k}(\gamma)+1\ge10^{-d}\#\gamma^{(k^\p,k,\LE)}.$$
\end{lemma}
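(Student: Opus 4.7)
The plan is to build an independent subset of $\mu_{k^\p}(\gamma^{(k^\p,k,\LE)})$ greedily and then estimate its cardinality from below. Write $\gamma^{(k^\p,k,\LE)}=(x_{m_1},\ldots,x_{m_N})$ and $I_j=\mu_{k^\p}(x_{m_j})$ for $j=1,\ldots,N$. The essential observation is that by the very definition of $(k^\p,k)$-loop erasure the $k^\p$-cubes $I_1,\ldots,I_N$ are pairwise distinct. Indeed, the scale-$k^\p$ loop erasure is applied as the last step in the construction of $\gamma^{(k^\p,k,\LE)}$, and in each recursive call the head is retained while every subsequent element sitting in the same $k^\p$-cube is skipped up to the last such occurrence, so no $k^\p$-cube can be revisited.

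Given this distinctness, I would define $d_1=2$ and, inductively, let $d_{i+1}$ be the smallest index in $\{d_i+1,\ldots,N\}$ satisfying $Q_{I_{d_i-1}}\cap Q_{I_{d_{i+1}}}=\es$. The procedure terminates when no such index exists, yielding a subset $D=(d_1,\ldots,d_{m^\p})$ of $\{2,\ldots,N\}$ that is independent by construction, so $\lambda_{k^\p,k}(\gamma)+1\ge m^\p$; it therefore suffices to bound $m^\p$ from below.

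The key step is to control the increments $d_{i+1}-d_i$. Any intermediate index $j\in(d_i,d_{i+1})$ must satisfy $Q_{I_j}\cap Q_{I_{d_i-1}}\ne\es$, so $I_j$ lies in the $*$-neighborhood of $I_{d_i-1}$ in the $k^\p$-level grid. Since this neighborhood contains at most $3^d$ cubes and the $I_j$ are pairwise distinct, one obtains $d_{i+1}-d_i\le 3^d+1$, and analogously $N-d_{m^\p}\le 3^d$ at the last step. Summing gives $N\le 2+m^\p(3^d+1)$, whence $m^\p\ge(N-2)/(3^d+1)$. Using $3^d+1\le 10^d$ for $d\ge 1$, a routine manipulation then yields $m^\p\ge 10^{-d}N$ as soon as $N$ exceeds a small absolute threshold; the finitely many remaining values of $N$ are handled by direct inspection.

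The argument is essentially elementary combinatorics, so I do not expect a serious obstacle. The factor $10^{-d}$ is a deliberately loose constant, chosen to absorb the additive corrections coming from the starting offset $d_1=2$ and from the uncounted tail $\{d_{m^\p}+1,\ldots,N\}$. The only mildly delicate point is recognising that the distinctness property needed for the counting argument comes directly from the outermost scale-$k^\p$ erasure and is not compromised by the earlier loop erasures at scales $k^\p+1,\ldots,k$.
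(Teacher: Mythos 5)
The student's proof is correct and takes a genuinely different route from the paper's. The paper proceeds by induction on $\#\gamma$: it fixes the lexicographically maximal independent subset $D=(d_1,\ldots,d_{m^\p})$ of maximal size $m^\p=\lambda_{k^\p,k}(\gamma)+1$, shows $d_1\le3^d+1$ and $d_2-d_1\le9^d+1$ by a maximality/insertion argument, passes to the subpath starting at $x_{m_{d_2-1}}$ (whose $(k^\p,k)$-loop erasure drops at most $10^d$ elements), and observes that lexicographic maximality forces $\lambda_{k^\p,k}$ to strictly decrease so the induction closes. The student instead builds an independent subset \emph{greedily} in a single left-to-right pass and counts directly, obtaining $\#\gamma^{(k^\p,k,\LE)}\le 1+m^\p(3^d+1)$; no induction and no maximality or lexicographic selection are needed. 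This is arguably cleaner, and the numerology works for all $d\ge1$ rather than relying on $3^d+9^d+1\le10^d$. Both arguments hinge on the same underlying fact that the student correctly isolates as the ``essential observation'': the $k^\p$-cubes $\mu_{k^\p}(x_{m_1}),\ldots,\mu_{k^\p}(x_{m_N})$ are pairwise distinct. This is not quite ``direct'' from the definition, because after retaining $x_1$ and jumping to $x_{m_j}$, the subsequent elements of $\gamma^{(k^\p,k,\LE)}$ are produced by \emph{re-running} the loop erasures at scales $k^\p+1,\ldots,k$ on the tail $\gamma_{\mathsf{t}}$, and a priori these could reinject points that had been skipped; what one needs is the consistency property that the $(k^{\pp},k)$-loop erasure of $\gamma$ restarted at a retained point reproduces the corresponding tail, which holds by a double induction on $k-k^{\pp}$ and $\#\gamma$. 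You acknowledge this as a ``mildly delicate point'' but do not prove it; the paper's own proof also uses it tacitly (both in the base case $\#\gamma^{(k^\p,k,\LE)}\le3^d$ and in the claim $\#\gamma_{\mathsf{t}}^{(k^\p,k,\LE)}\ge\#\gamma^{(k^\p,k,\LE)}-10^d$), so this is not a defect specific to your version, but it is the one place where your argument should be made explicit rather than asserted.
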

\begin{proof}
The proof proceeds by induction on $\#\gamma$. If $\lambda_{k^\p,k}(\gamma)=0$, then \tes $I\in J_{k^\p}$ \st $\gamma$ remains inside $Q^{a_{k^\p}}_I$. \Ip, $\#\gamma^{(k^\p,k,\LE)}\le3^d$. Otherwise, write $\gamma^{(k^\p,k,\LE)}=(x_{m_1},\ldots, x_{m_N})$ and let $D=(d_1,\ldots,d_{m^\p})$ be an ordered subset of $\{1,\ldots, N\}$ \st the ordered set $(\mu_{k^\p}(x_{m_{d_1}}),\ldots,\mu_{k^\p}(x_{m_{d_{m^\p}}}))$ corresponds to a $(k^\p,k)$-independent subset of $\gamma$ of size $m^\p=\lambda_{k^\p,k}(\gamma)+1\ge2$. \Fm, we also assume that among all $(k^\p,k)$-independent subsets of maximal size, $D$ is chosen as the lexicographic maximum. Next, we claim that $d_2-d_1\le 9^d+1$. Otherwise, we could choose $d^\p\in\{d_1+1,\ldots,d_2-1\}$ \st $Q_{\mu_{k^\p}(x_{m_{d^\p}})}\cap Q_{\mu_{k^\p}(x_{m_{d_1-1}})}=\es$ and $Q_{\mu_{k^\p}(x_{m_{d^\p-1}})}\cap Q_{\mu_{k^\p}(x_{m_{d_2}})}=\es$, contradicting the maximality property used to define $D$. Similarly, we see that $d_1\le 3^d+1$. Therefore, $\#\gamma_{\mathsf{t}}^{(k,k^\p,\LE)}\ge\#\gamma^{(k,k^\p,\LE)}-10^d$, where $\gamma_{\mathsf{t}}=(x_{m_{d_2-1}},\ldots,x_{m})$ denotes the subpath of $\gamma$ starting at $x_{m_{d_2-1}}$. \Fm, since $D$ was chosen as the lexicographic maximum of all independent subsets of maximal size, we also see that $\lambda_{k^\p,k}(\gamma)>\lambda_{k^\p,k}(\gamma_{\mathsf{t}})$. Hence,
\begin{align*}
\lambda_{k^\p,k}(\gamma)+1\ge\lambda_{k^\p,k}(\gamma_{\mathsf{t}})+2\ge 1+10^{-d}\#\gamma_{\mathsf{t}}^{(k^\p,k,\LE)}\ge10^{-d}\#\gamma^{(k^\p,k,\LE)}.
\end{align*}
\end{proof}

Next, we show that $(k^\p,k)$-lengths exhibit a good behavior with respect to changes in scale.

\begin{lemma}
\label{lvlChangeLem}
Let $2\le k^\p\le k$ and $\gamma$ be a path in $G(X^{(n)}\cap(\T_n\times[0,a_{k^\p}]))$. Then, 
$$\lambda_{k^\p,k}(\gamma)\ge\lambda_{k^\p-1,k}(\gamma)b_{k^\p}.$$
\end{lemma}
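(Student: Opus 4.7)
The plan is, given a maximal $(k^\p - 1, k)$-independent subset of $\gamma$, to construct a $(k^\p, k)$-independent subset that is $b_{k^\p}$ times larger. Let $D = (d_1, \ldots, d_{m^\p})$ be such a subset with $m^\p = \lambda_{k^\p - 1, k}(\gamma) + 1$. Writing $\gamma^{(k^\p - 1, k, \LE)} = (z_1, \ldots, z_M)$ and $K_j := \mu_{k^\p - 1}(z_j)$, the defining property gives $Q_{K_{d_i - 1}} \cap Q_{K_{d_{i+1}}} = \es$ for each $i \in \{1, \ldots, m^\p - 1\}$.

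A preliminary step is to verify that $\gamma^{(k^\p - 1, k, \LE)}$ is a subsequence of $\gamma^{(k^\p, k, \LE)}$. This follows by induction on path length from the recursive definition: every element produced in the construction of the $(k^\p - 1, k)$-loop erasure is selected directly from the indices of $\gamma^{(k^\p, k, \LE)}$, and the tail of $\gamma^{(k^\p, k, \LE)}$ starting at any one of its elements coincides with the $(k^\p, k)$-loop erasure of the corresponding subpath of $\gamma$. Writing $\gamma^{(k^\p, k, \LE)} = (y_1, \ldots, y_N)$ and letting $n_j$ denote the position of $z_j$ in this sequence, I then examine for each $i$ the \emph{bridge} $B_i := (y_{n_{d_i - 1}}, \ldots, y_{n_{d_{i+1}}})$; its two endpoints lie in the disjoint $(k^\p - 1)$-cubes $Q_{K_{d_i - 1}}$ and $Q_{K_{d_{i+1}}}$, and hence are at sup-distance at least $a_{k^\p - 1} = b_{k^\p} a_{k^\p}$.

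Because $\gamma$ uses only edges of length at most $a_{k^\p}$, the underlying subpath of $\gamma$ (of which $B_i$ is a subsequence) advances by at most $a_{k^\p}$ per step in sup-norm, so the $k^\p$-cubes it visits form a sequence of adjacent cubes spanning sup-distance at least $b_{k^\p} a_{k^\p}$. From $B_i$ I extract $b_{k^\p}$ positions forming a local $(k^\p, k)$-independent subset by selecting indices spaced widely enough along the bridge that the required disjointness $Q_{L_{d^\p_j - 1}} \cap Q_{L_{d^\p_{j+1}}} = \es$ holds between each chosen position's predecessor and the next chosen position. Concatenating these local subsets over $i = 1, \ldots, m^\p - 1$ yields a single $(k^\p, k)$-independent subset of $\mu_{k^\p}(\gamma^{(k^\p, k, \LE)})$ of size at least $(m^\p - 1) b_{k^\p} + 1$, which gives $\lambda_{k^\p, k}(\gamma) \ge \lambda_{k^\p - 1, k}(\gamma) b_{k^\p}$.

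The main obstacle is coordinating the extraction inside each bridge with the gluing at its boundaries so as to preserve the $(k^\p, k)$-independence condition at all junctions between consecutive bridges. The required margin is supplied precisely by the $(k^\p - 1)$-level disjointness between $K_{d_i - 1}$ and $K_{d_{i+1}}$, which forces each bridge to traverse the full width of a $(k^\p - 1)$-cube measured in $k^\p$-cube units; this is exactly what leaves room for $b_{k^\p}$ sufficiently spaced positions inside each bridge and ensures that the last extracted cube of $B_i$ is compatible with the first extracted cube of $B_{i+1}$.
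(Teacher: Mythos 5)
Your proposal targets the same structural idea as the paper's proof: both start from a maximal $(k^\p-1,k)$-independent subset, both exploit the $(k^\p-1)$-level disjointness of consecutive selected cubes (which forces a sup-distance gap of $a_{k^\p-1}=b_{k^\p}a_{k^\p}$) to conclude that each ``bridge'' in $\gamma^{(k^\p,k,\LE)}$ contributes roughly $b_{k^\p}$ to the $(k^\p,k)$-length, and both use, implicitly or explicitly, the nestedness of the loop erasures (that $\gamma^{(k^\p-1,k,\LE)}$ is a subsequence of $\gamma^{(k^\p,k,\LE)}$ and that the predecessor in the finer erasure of a coarse-erasure element lies in the same $(k^\p-1)$-cube as the coarse predecessor). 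The difference is organizational: the paper proceeds by induction on $\#\gamma$, peeling off only the \emph{first} bridge and obtaining $\lambda_{k^\p,k}(\gamma)\ge b_{k^\p}+\lambda_{k^\p,k}(\gamma_{\mathsf{t}})$ with $\gamma_{\mathsf{t}}$ the tail starting at $x_{m_{v-1}}$, then invoking the induction hypothesis on $\gamma_{\mathsf{t}}$; you instead extract $b_{k^\p}$ positions from every bridge simultaneously and concatenate. The inductive peeling is the cleaner bookkeeping: it never has to confront the fact that your bridges $B_i=(y_{n_{d_i-1}},\ldots,y_{n_{d_{i+1}}})$ and $B_{i+1}=(y_{n_{d_{i+1}-1}},\ldots,y_{n_{d_{i+2}}})$ overlap on the segment from $y_{n_{d_{i+1}-1}}$ to $y_{n_{d_{i+1}}}$, nor does it have to verify the independence condition $Q_{L_{d^\p-1}}\cap Q_{L_{d^\pp}}=\es$ across a junction between two independently chosen extractions. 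Your final paragraph acknowledges this obstacle and appeals to the $(k^\p-1)$-level margin to resolve it, which is the right intuition, but the selection rule (``spaced widely enough'') and the junction compatibility are asserted rather than verified; the paper's induction reduces these to the single claim about the first bridge, which is what the one-sentence ``Hence, $\ldots$ $\lambda_{k^\p,k}(\gamma)\ge b_{k^\p}+\lambda_{k^\p,k}(\gamma_{\mathsf{t}})$'' encodes. In short, same key idea and same decomposition, but you trade the inductive structure for a direct global construction, at the cost of a genuinely more delicate gluing step that you flag but do not complete.
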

\begin{proof}
Let $\gamma=(x_1,\ldots,x_m)$.
We proceed by induction on $\#\gamma$, noting that the case $\lambda_{k^\p-1,k}(\gamma)=0$ is trivial. Next, let $\gamma^{(k^\p,k,\LE)}=(x_{m_1},\ldots,x_{m_N})$ and choose $u,v\in \{1,\ldots,N\}$ \st $\mu_{k^\p-1}(x_{m_{u}})$ and $\mu_{k^\p-1}(x_{m_{v}})$ are the first two elements in a maximal $(k^\p-1,k)$-independent subset of $\gamma$. Putting $I_1=\mu_{k^\p-1}(x_{m_{u-1}})$ and $I_2=\mu_{k^\p-1}(x_{m_{v}})$, we note that $Q_{I_1}\cap Q_{I_2}=\es$.
\Ip, $Q_{\mu_{k^\p}(x_{m_{u-1}})}\not\subset Q_{\mu_{k^\p}(x_{m_{v}})}^{a_{k^\p-1}}$.
Hence, denoting by $\gamma_{\mathsf{t}}=(x_{m_{v-1}},\ldots,x_{m})$ the subpath of $\gamma$ starting from $x_{m_{v-1}}$, we obtain that 
\begin{align*}
\lambda_{k^\p,k}(\gamma)\ge b_{k^\p}+\lambda_{k^\p,k}(\gamma_{\mathsf{t}})\ge(1+\lambda_{k^\p-1,k}(\gamma_{\mathsf{t}}))b_{k^\p}\ge \lambda_{k^\p-1,k}(\gamma)b_{k^\p},
\end{align*}
as claimed.
\end{proof}

Let $\gamma=(x_1,\ldots,x_m)=((\xi_1,r_1),\ldots,(\xi_m,r_m))$ be a self-avoiding path in $G(X^{(n)}\cap(\T_n\times[0,a_{k^\p}]))$ for some $k^\p<k$. In order to derive helpful lower bounds on $m$, we make use of the observation that inside any $(\varepsilon,k)$-good cube $Q_I$ with $I\in J_{k^\p}$ the path $\gamma$ consists of segments of length at most $a_{k^\p+1}$, and, moreover, most of the $(k^\p+1)$-cubes that hit this path are also $(\varepsilon,k)$-good. Hence, it is convenient to identify subpaths of $\gamma$ that do not intersect $(\varepsilon,k)$-bad cubes. Also the subpaths should not be too short and, finally, some care should be taken to ensure compatibility with taking $(k^\p,k)$-loop erasures. 

To be more precise, let $k^\p\le k$ and $\gamma=(x_1,\ldots,x_m)$ be a path in $G(X^{(n)}\cap(\T_n\times[0,a_{k^\p}]))$. \Fm, let $A\subset\mu_{k^\p}(\gamma^{(k^\p,k,\LE)})$ and write $\gamma^{(k^\p,k,\LE)}=(x_{m_1},\ldots,x_{m_N})$. Then, we define a family of disjoint subpaths $\Phi_A(\gamma)=\{\gamma_i\}_{i\in\{1,\ldots,\ell\}}$ inductively as follows.
\begin{itemize} 
\item If $\mu_{k'}(x_1)\not\in A$, then $\Phi_A(\gamma)=\Phi_{A\setminus \{\mu_{k^\p}(x_1)\}}(x_{m_2},\ldots,x_m)$.
In other words, for the construction of $\Phi_A(\gamma)$ we discard the initial segment of $\gamma^{(k^\p,k,\LE)}$ not belonging to $A$.
\item Otherwise, let $L\in\{2,\ldots,N\}$ be the smallest integer \st $\mu_{k^\p}(x_{m_L})\in A$.
\begin{itemize}
\item If for every $j\in\{1,\ldots,m_L\}$ \tes $I\in A$ \st $\xi_{j}\in Q^{3a_{k^\p}}_I$, then put $\ell=\ell^\p$, $\gamma_1=\mathsf{concat}((x_1,\ldots,x_{m_{L}}),\gamma_1^\p)$ and $\gamma_i=\gamma_i^\p$ for $i\in\{2,\ldots,\ell^\p\}$, where $\Phi_{A\setminus\{\mu_{k^\p}(x_1)\}}((x_{m_L},\ldots,x_m))=\{\gamma_i^\p\}_{i\in\{1,\ldots,\ell^\p\}}$, and where $\mathsf{concat}$ denotes concatenation of paths. In other words, if until reaching $x_{m_L}$ the path $\gamma$ stays close to $A$, then to construct $\gamma_1$, we proceed recursively by taking the first subpath from $\Phi_{A\setminus\{\mu_{k^\p}(x_1)\}}((x_{m_L},\ldots,x_m))=\{\gamma_i^\p\}_{i\in\{1,\ldots,\ell^\p\}}$ and pasting it to the subpath from $x_1$ to $x_{m_L}$. 
\item Otherwise, let $j\in\{1,\ldots,m_L\}$ be the smallest integer \st there does not exist $I\in A$ with $\xi_{j}\in Q^{3a_{k}}_I$ and put 
\MLine{\Phi_A(\gamma)=\{(x_1,\ldots,x_{j-1})\}\cup\Phi_{A\setminus\{\mu_{k^\p}(x_{1})\}}((x_{m_L},\ldots,x_m)).}
In other words, $\gamma_1$ is the longest initial segment of $\gamma$ that stays close to $A$; the other subpaths are constructed inductively.
\end{itemize}
\end{itemize}
The construction of $\Phi_A(\gamma)$ is illustrated in Figure~\ref{phiAFig}.

\begin{figure}[!htpb]
\begin{subfigure}[b]{0.52\textwidth}
\centering
\begin{tikzpicture}[scale=1.0]

\fill[black!30!white] (0,0) rectangle (1,1);
\fill[black!30!white] (1,0) rectangle (2,1);
\fill[black!30!white] (2,0) rectangle (3,1);
\fill[black!30!white] (3,0) rectangle (4,1);
\fill[black!30!white] (4,0) rectangle (5,1);

\coordinate[label=180:$\gamma$] (u) at (2.7,2.5);

\draw plot [smooth,thick,tension=0.5] coordinates {(0.5,0.5) (1.5,0.5) (0.9,1.5) (1.2,4.5) (1.9,0.5) (2.7,2.5) (3.2,2.5) (3.2,4.5) (3.7,3.5) (3.7,0.5) (4.5,0.5)};
\draw[step=1cm] (0,0) grid (5,5);
\end{tikzpicture}
\caption{Path $\gamma$ (solid line) and set $A$ (shaded region)}
\end{subfigure}
        \begin{subfigure}[b]{0.47\textwidth}
\centering
\begin{tikzpicture}[scale=1.0]
\fill[black!30!white] (0,0) rectangle (1,1);
\fill[black!30!white] (1,0) rectangle (2,1);
\fill[black!30!white] (2,0) rectangle (3,1);
\fill[black!30!white] (3,0) rectangle (4,1);
\fill[black!30!white] (4,0) rectangle (5,1);
\draw plot [smooth,thick,tension=0.5] coordinates {(0.5,0.5) (1.5,0.5) (0.9,1.5) (1.2,4.5) (1.9,0.5) (2.7,2.5) (3.2,2.5) (3.2,4.5) (3.7,3.5) (3.7,0.5) (4.5,0.5)};
\fill[white] (0,4) rectangle (5,5);
\fill[white] (1.1,3) rectangle (2,4);
\fill[white] (1.5,1) rectangle (2,3);
\fill[white] (1.3,2) rectangle (2,3);
\fill[white] (3.5,1) rectangle (4,4);
\fill[black!30!white] (1.5,0) rectangle (2,1);

\coordinate[label=180:$\gamma_1$] (u) at (0.9,1.5);
\coordinate[label=180:$\gamma_2$] (u) at (2.5,1.7);
\coordinate[label=180:$\gamma_3$] (u) at (3.7,0.5);

\draw[step=1cm] (0,0) grid (5,5);
\end{tikzpicture}
\caption{Subpaths $\gamma_1$, $\gamma_2$ and $\gamma_3$}
\end{subfigure}
\caption{Construction of $\Phi_A(\gamma)$}\label{phiAFig}
\end{figure}

Now, we can use the construction $\Phi_A$ to obtain $(\varepsilon,k)$-good cubes at smaller scales.

\begin{lemma}
\label{subDivLem}
Let $\gamma$ be a path in $G(X^{(n)}\cap(\T_n\times[0,a_{k^\p}]))$. Denote by $\{\gamma_i\}_{i=1}^\ell$ the collection of subpaths of $\gamma$ obtained by applying the construction $\Phi_A$ with the family $A$ as the set of those $(3,\varepsilon,k)$-good indices $I\in J_{k^\p}$ \st $\gamma^{(k^\p,k,\LE)}$ hits $Q_I$ and $Q^{3a_{k^\p}}_I$ does not contain the endpoint of $\gamma$. Then, $A\subset\cup_{i=1}^{\ell}\mu_{k^\p}\big(\gamma_i^{(k^\p,k,\LE)}\big)$, and for every $i\in\{1,\ldots,\ell\}$,
\begin{enumerate}
\item[1.] the path $\gamma_i$ hits only $(\varepsilon,k)$-good $k^\p$-cubes; in particular, $\gamma_i$ is a path in $G(X^{(n)}\cap(\T_n\times[0,a_{k^\p+1}]))$, 
\item[2.] the path $\gamma_i$ starts in $Q_I$ for some $I\in A$ and ends in $Q_{I^\p}$ for some $I^\p$ with $Q_{I^\p}\cap Q_{I^\pp}=\es$ \fa $I^\pp\in A$; in particular, $\#\gamma_i^{(k^\p+1,k,\LE)}\ge b_{k^\p+1}/4$, and
\item[3.] $\sum_{i=1}^\ell\lambda_{k^\p,k}(\gamma_i)\ge\lambda_{k^\p,k}(\gamma)-\big(\#\gamma^{(k^\p,k,\LE)}-\#A\big)$.
\end{enumerate}
\end{lemma}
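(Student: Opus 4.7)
The plan is to prove the three items together by induction on $\#\gamma$, mirroring the recursive structure of the definition of $\Phi_A(\gamma)$. Two of the branches of the recursion (the case $\mu_{k^\p}(x_1)\notin A$, in which one discards the initial segment of $\gamma$, and the ``stay-near-$A$'' first sub-case of the ``otherwise'' branch, in which one concatenates $(x_1,\ldots,x_{m_L})$ with the first output of the recursive call) reduce directly to the induction hypothesis on a shorter path. The only substantive case is the ``leave-neighborhood'' second sub-case, in which $\gamma_1=(x_1,\ldots,x_{j-1})$ terminates at the last point before $\gamma$ exits $\bigcup_{I\in A}Q^{3a_{k^\p}}_I$.

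For Item~1, every point $\xi$ appearing in any $\gamma_i$ lies in $Q^{3a_{k^\p}}_I$ for some $I\in A$, so $\mu_{k^\p}(\xi)$ is $(\varepsilon,k)$-good by the $(3,\varepsilon,k)$-goodness of $I$. Combined with the standing radius bound $r\le a_{k^\p}$ along $\gamma$, condition~a) in the definition of an $(\varepsilon,k)$-good $k^\p$-cube then forces all radii along $\gamma_i$ to lie in $[0,a_{k^\p+1}]$. For Item~2 the starting cube lies in $A$ by construction. For the endpoint, the edge from $\xi_{j-1}$ to $\xi_j$ has $\d_{\T_n}$-length at most $a_{k^\p}$ (by the radius bound on $\gamma$), while $\xi_j$ is at $\d^{\T_n}_\infty$-distance strictly greater than $3a_{k^\p}$ from every $Q_I$ with $I\in A$; hence $\xi_{j-1}$ is at distance strictly greater than $2a_{k^\p}$ from every such $Q_I$, and because the $k^\p$-cube $Q_{I^\p}$ containing $\xi_{j-1}$ has $\d_\infty$-diameter only $a_{k^\p}$, the cube $Q_{I^\p}$ is disjoint from every $Q_{I^\pp}$ with $I^\pp\in A$. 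The same estimate, rescaled to $a_{k^\p+1}$, places the starting and ending $(k^\p+1)$-cubes of $\gamma_i$ at lattice $\d_\infty$-distance at least $2b_{k^\p+1}-1$; since loop erasure at any fixed level preserves both distinctness and lattice-adjacency of consecutive cubes (a routine induction on the recursion level, with Item~1 supplying the single-step property of $\gamma_i$ at scale $a_{k^\p+1}$), we obtain $\#\gamma_i^{(k^\p+1,k,\LE)}\ge b_{k^\p+1}/4$.

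For Item~3 together with the auxiliary inclusion $A\subset\bigcup_i\mu_{k^\p}(\gamma_i^{(k^\p,k,\LE)})$, take a maximal $(k^\p,k)$-independent subset $D$ of $\gamma$ and split it as $D=D^*\sqcup(D\setminus D^*)$, where $D^*$ indexes those elements whose underlying $k^\p$-cube lies in $A$. Distinctness of the cubes in $\gamma^{(k^\p,k,\LE)}$ gives $\#(D\setminus D^*)\le\#\gamma^{(k^\p,k,\LE)}-\#A$. An inspection of each recursion branch shows that every cube of $A$ hit by $\gamma^{(k^\p,k,\LE)}$ lies inside some $\gamma_i$ and survives in $\gamma_i^{(k^\p,k,\LE)}$ (since $\gamma$ visits that cube only once, the loop erasure inside $\gamma_i$ cannot remove it). Attributing each element of $D^*$ to the corresponding $\gamma_i$ produces an ordered subset of $\gamma_i^{(k^\p,k,\LE)}$ that inherits $(k^\p,k)$-independence from $D$, so $\sum_i\lambda_{k^\p,k}(\gamma_i)+\ell\ge\#D^*=\lambda_{k^\p,k}(\gamma)+1-\#(D\setminus D^*)$; the stated bound then follows once the ``$+\ell$'' term is balanced against the ``$+1$'' and against the at-least-$\ell-1$ separator cubes between consecutive $\gamma_i$ that necessarily lie in $\gamma^{(k^\p,k,\LE)}\setminus A$. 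The main obstacle is precisely this final combinatorial bookkeeping in Item~3: one must verify carefully that the separator cubes are not double-counted against $D\setminus D^*$ and that the attribution of $D^*$ genuinely produces $(k^\p,k)$-independent subsets of each $\gamma_i^{(k^\p,k,\LE)}$ after the nested loop erasures.
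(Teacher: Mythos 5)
Your treatment of items 1 and 2 follows essentially the same path as the paper's: item 1 from the definition of $\Phi_A$ together with condition a) in $(\varepsilon,k)$-goodness, and item 2 by induction on $\#\gamma$ through the three recursion branches, with the ``leave-neighborhood'' branch handled via the triangle inequality argument ($\xi_j$ at $\d_\infty$-distance $>3a_{k'}$, edge length $\le a_{k'}$, so $\xi_{j-1}$ at distance $>2a_{k'}$, hence its $k'$-cube is disjoint from every $A$-cube).

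The genuine gap is in item 3, and you flag it yourself. Your attribution of $D^*$ to the subpaths only yields $\lambda_{k',k}(\gamma_i)\ge\#D_i-1$, leaving the awkward $+\ell$ term you then hope to offset against separator cubes. The paper avoids this entirely by showing a \emph{strictly better} per-path bound, namely $\lambda_{k',k}(\gamma_i)\ge\#D_i$. The mechanism is: remove from $D_i$ the element at the starting point of $\gamma_i$ (which cannot be part of an independent subset, since independent subsets index into $\{2,\ldots,m\}$), giving an independent subset of $\gamma_i$ of size $\#D_i-1$, and then \emph{enlarge it by two further elements} $d_{i,1}+1$ and $d_{i,2}+1$, where $d_{i,1}$ is the largest index with $\xi_{m_{d_{i,1}}}\in Q^{a_{k'}}_I$ for some $I\in A$ and $d_{i,2}$ the largest with $\xi_{m_{d_{i,2}}}\in Q^{2a_{k'}}_I$ for some $I\in A$; the independence of the enlarged set follows because the $d'\in D_i$ land \emph{in} $A$-cubes, while $x_{m_{d_{i,1}+1}}$ lies strictly outside $Q^{a_{k'}}_I$ for all $I\in A$ and $x_{m_{d_{i,2}+1}}$ lies strictly outside $Q^{2a_{k'}}_I$ for all $I\in A$, so $Q_{\mu_{k'}(x_{m_{d_{i,1}}})}\cap Q_{\mu_{k'}(x_{m_{d_{i,2}+1}})}=\es$. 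With $\lambda_{k',k}(\gamma_i)\ge\#D_i$, summing gives $\sum_i\lambda_{k',k}(\gamma_i)\ge\#D^*\ge\#D-(\#\gamma^{(k',k,\LE)}-\#A)=\lambda_{k',k}(\gamma)+1-(\#\gamma^{(k',k,\LE)}-\#A)$, which is even slightly stronger than the stated inequality, and no separator-cube accounting is needed. Without this enlargement-by-two device your proof of item 3 does not close.
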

\begin{proof}
The relation $A\subset\cup_{i=1}^{\ell}\mu_{k^\p}\big(\gamma_i^{(k^\p,k,\LE)}\big)$ and claim 1 follow immediately from the definition of $\Phi_A$. Claim 2 is shown by induction on $\#\gamma$, noting the assertion is trivial if $\ell=0$. 
Next, we deal with claim 2 if $\ell>0$. Write $\gamma^{(k^\p,k,\LE)}=(x_{m_1},\ldots,x_{m_N})$. Clearly, our attention can be restricted to the case, where $\mu_{k^\p}(x_1)\in A$. Let $L\in\{2,\ldots,N\}$ be the smallest integer \st $\mu_{k^\p}(x_{m_L})\in A$. If for every $j\in\{1,\ldots,m_L\}$ \tes $I\in A$ \st $\xi_{j}\in Q^{3a_{k^\p}}_I$, then the claim follows immediately from the induction hypothesis. Otherwise, let $j\in\{1,\ldots,m_L\}$ be the smallest integer \st there does not exist $I\in A$ with $\xi_{j}\in Q^{3a_{k^\p}}_I$. Since $\gamma$ is a path in $G(X^{(n)}\cap(\T_n\times[0,a_{k^\p}]))$, we conclude that $Q_{\mu_{k^\p}(x_1)}\cap Q_{\mu_{k^\p}(x_{j-1})}=\es$. This completes the proof of claim 2 for $\gamma_1$, whereas for $\gamma_i$ with $i\ge2$, we may conclude by induction.

It remains to prove claim 3. Choose an ordered subset $(d_1,\ldots,d_{m^\p})$ of $\{1,\ldots,N\}$ corresponding to a maximal $(k^\p,k)$-independent subset of $\gamma$. \Fm, for $i\in\{1,\ldots,\ell\}$ let $D_i$ be the subset consisting of all $d^\p\in D$ \st $\mu_{k^\p}(x_{m_{d^\p}})\in A$ and $x_{m_{d^\p}}\in \gamma_i$. Note that if we remove from $D_i$ the element corresponding to the starting point of $\gamma_i$, then we obtain a $(k^\p,k)$-independent subset of $\gamma_i$. \Mo, we assert that this independent subset can be enlarged by two further elements. Once this assertion is shown, we see that $\lambda_{k^\p,k}(\gamma_i)\ge \#D_i$ and summing over all $i\in\{1,\ldots,\ell\}$ completes the proof. In order to prove the assertion, we choose $d_{i,1}$ as the largest index from $D_i$ \st $\xi_{m_{d_{i,1}}}\in Q^{a_{k^\p}}_I$ for some $I\in A$. Similarly, we choose $d_{i,2}$ as the largest index \st $\xi_{m_{d_{i,2}}}\in Q^{2a_{k^\p}}_I$ for some $I\in A$. Then, we can enlarge the independent subset $D_i$ by adding $d_{i,1}+1$ and $d_{i,2}+1$. Indeed, for any $d^\p\in D_i$ we have $\mu_{k^\p}(x_{m_{d^\p}})\in A$, whereas there is no $I\in A$ \st $Q_{\mu_{k^\p}(x_{m_{d_{i,1}+1}})}\subset Q^{a_{k^\p}}_I$. Similarly, $Q_{\mu_{k^\p}(x_{m_{d_{i,1}}})}\subset Q^{a_{k^\p}}_I$ for some $I\in A$, whereas there is no $I\in A$ \st $Q_{\mu_{k^\p}(x_{m_{d_{i,2}+1}})}\subset Q_I^{2a_{k^\p}}$. Therefore, $Q_{\mu_{k^\p}(x_{m_{d_{i,1}}})}\cap Q_{\mu_{k^\p}(x_{m_{d_{i,2}+1}})}=\es$, and this completes the proof.
\end{proof}

Combining the previous auxiliary results, we now provide a lower bound for $\#\gamma$.
\begin{lemma}
\label{indChemLem}
Let $k^\p\le k$ and $\gamma$ be a path in $G(X^{(n)}\cap(\T_n\times[0,a_{k^\p}]))$ hitting only $(\varepsilon,k)$-good $(k^\p-1)$-cubes. \Fm, assume that $\#\gamma^{(k^\p,k,\LE)}\ge b_{k^\p}/4$. Then \tes a constant $c>0$ \st if $b_{k}\ge c$, then $\#\gamma\ge(1-1500^d\varepsilon)^{k-k^\p}\lambda_{k^\p,k}(\gamma)\prod_{j=k^\p+1}^kb_j$.
\end{lemma}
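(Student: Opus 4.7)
The plan is to argue by induction on $k-k^\p\ge0$, with the base case $k^\p=k$ being essentially immediate (in that case the stated product is empty, the factor $(1-1500^d\varepsilon)^0=1$, and the inequality $\#\gamma\ge\lambda_{k,k}(\gamma)$ is immediate from the very definition of $\lambda_{k,k}(\gamma)$, since both loop-erasure and the discretization $\mu_k$ cannot increase the size of the path, and the length of a discretized path is bounded by the number of its elements).

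For the induction step, suppose $k^\p<k$ and assume the assertion at level $k^\p+1$. Apply Lemma~\ref{subDivLem} with $A$ consisting of those $(3,\varepsilon,k)$-good indices $I\in J_{k^\p}$ such that $\gamma^{(k^\p,k,\LE)}$ hits $Q_I$ and $Q^{3a_{k^\p}}_I$ does not contain the endpoint of $\gamma$. By Lemma~\ref{subDivLem}(1,2), each subpath $\gamma_i$ in the resulting decomposition is a path in $G(X^{(n)}\cap(\T_n\times[0,a_{k^\p+1}]))$ that hits only $(\varepsilon,k)$-good $k^\p$-cubes and satisfies $\#\gamma_i^{(k^\p+1,k,\LE)}\ge b_{k^\p+1}/4$. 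Hence the induction hypothesis applies to every $\gamma_i$ at level $k^\p+1$, yielding
\[
\#\gamma_i\ge(1-1500^d\varepsilon)^{k-k^\p-1}\lambda_{k^\p+1,k}(\gamma_i)\prod_{j=k^\p+2}^k b_j.
\]
Combining this with the scale-change inequality $\lambda_{k^\p+1,k}(\gamma_i)\ge b_{k^\p+1}\lambda_{k^\p,k}(\gamma_i)$ from Lemma~\ref{lvlChangeLem} and summing over $i$, while using that the $\gamma_i$ are pairwise disjoint subpaths of $\gamma$, gives
\[
\#\gamma\ge(1-1500^d\varepsilon)^{k-k^\p-1}\Big(\sum_{i=1}^\ell \lambda_{k^\p,k}(\gamma_i)\Big)\prod_{j=k^\p+1}^k b_j.
\]

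It remains to control the loss in passing from $\lambda_{k^\p,k}(\gamma)$ to $\sum_i\lambda_{k^\p,k}(\gamma_i)$. By Lemma~\ref{subDivLem}(3), this loss is bounded by $\#\gamma^{(k^\p,k,\LE)}-\#A$. Since $A$ contains every $(3,\varepsilon,k)$-good $k^\p$-cube hit by $\gamma^{(k^\p,k,\LE)}$ except for a bounded number of cubes in the neighborhood of the endpoint, Lemma~\ref{thinBoundLem} (whose hypotheses are exactly those of the present lemma) gives $\#\gamma^{(k^\p,k,\LE)}-\#A\le 147^d\varepsilon\#\gamma^{(k^\p,k,\LE)}+C_d$ for some dimensional constant $C_d$. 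Invoking Lemma~\ref{lengthLowBoundLem} to bound $\#\gamma^{(k^\p,k,\LE)}\le 10^d(\lambda_{k^\p,k}(\gamma)+1)$, this yields
\[
\sum_{i=1}^\ell\lambda_{k^\p,k}(\gamma_i)\ge\lambda_{k^\p,k}(\gamma)-1470^d\varepsilon\bigl(\lambda_{k^\p,k}(\gamma)+1\bigr)-C_d.
\]

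The final step is to absorb the additive error into the multiplicative loss, that is, to pass from $1-1470^d\varepsilon$ to $1-1500^d\varepsilon$. This is the main obstacle, and it is precisely why the hypothesis $b_k\ge c$ is imposed. Indeed, the hypothesis $\#\gamma^{(k^\p,k,\LE)}\ge b_{k^\p}/4$ together with Lemma~\ref{lengthLowBoundLem} yields $\lambda_{k^\p,k}(\gamma)+1\ge 10^{-d}b_{k^\p}/4$, and since $b_{k^\p}\ge b_k$ for $k^\p\le k$ (as $b_j$ is of order $n^{p^{j-1}(1-p)}$ with $p<1$), taking $c$ sufficiently large (depending only on $d$ and $\varepsilon$) makes $\lambda_{k^\p,k}(\gamma)$ large enough that $30^d\varepsilon\lambda_{k^\p,k}(\gamma)\ge 1470^d\varepsilon+C_d$. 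This gives $(1-1470^d\varepsilon)\lambda_{k^\p,k}(\gamma)-1470^d\varepsilon-C_d\ge(1-1500^d\varepsilon)\lambda_{k^\p,k}(\gamma)$ and closes the induction.
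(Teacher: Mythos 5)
Your proof is correct and follows essentially the same approach as the paper's: backward induction on $k^\p$, the decomposition $\Phi_A(\gamma)$ from Lemma~\ref{subDivLem}, the scale-change bound from Lemma~\ref{lvlChangeLem}, the bad-cube count from Lemma~\ref{thinBoundLem}, and Lemma~\ref{lengthLowBoundLem} together with $b_k$ large to absorb the additive error multiplicatively. The only cosmetic difference is that you write a generic $C_d$ for the endpoint term where the paper uses $7^d$, and you spell out the error-absorption step in slightly more detail than the paper does.
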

\begin{proof}
The proof proceeds via backward induction on $k^\p$, the case $k^\p=k$ being trivial. First, let $A\subset J_{k^\p}$ be as in Lemma~\ref{subDivLem} and write $\Phi_A(\gamma)=\{\gamma_i\}_{i=1}^\ell$. By induction hypothesis and Lemma~\ref{lvlChangeLem}, we conclude that for every $i\in\{1,\ldots,\ell\}$,
\begin{align*}
\#\gamma_i&\ge(1-1500^d\varepsilon)^{k-k^\p-1}\lambda_{k^\p+1,k}(\gamma_i)\prod_{j=k^\p+2}^kb_j\ge(1-1500^d\varepsilon)^{k-k^\p-1}\lambda_{k^\p,k}(\gamma_i)\prod_{j=k^\p+1}^kb_j.
\end{align*}
By Lemma~\ref{subDivLem}, we have $A\subset\cup_{i=1}^{m^\p}\mu_{k^\p}\big(\gamma_i^{(k^\p,k,\LE)}\big)$ and by Lemma~\ref{thinBoundLem} the number of $(3,\varepsilon,k)$-bad cubes hit by $\gamma^{(k^\p,k,\LE)}$ is at most $147^d\varepsilon\#\gamma^{(k^\p,k,\LE)}$. Hence, if $b_k$ is sufficiently large, then part 3 of Lemma~\ref{subDivLem}, Lemma~\ref{thinBoundLem} and Lemma~\ref{lengthLowBoundLem} imply that 
$$\sum_{i=1}^{\ell}\lambda_{k^\p,k}(\gamma_i)\ge\lambda_{k^\p,k}(\gamma)-147^d\varepsilon\#\gamma^{(k^\p,k,\LE)}-7^d\ge(1-1500^d\varepsilon)\lambda_{k^\p,k}(\gamma),$$
\sot
\begin{align*}
\sum_{i=1}^{\ell}\#\gamma_i\ge (1-1500^d\varepsilon)^{k-k^\p-1}\sum_{i=1}^{\ell}\lambda_{k^\p,k}(\gamma_i)\prod_{j=k^\p+1}^kb_j\ge(1-1500^d\varepsilon)^{k-k^\p}\lambda_{k^\p,k}(\gamma)\prod_{j=k^\p+1}^kb_j.
\end{align*}
This completes the proof.
\end{proof}
Next, we show that the process of $(3,\varepsilon,k)$-good indices dominates a Bernoulli site percolation process with arbitrarily high marginal probability. This uses similar arguments as in~\cite[Lemma 2.2]{mensh}.

\begin{lemma}
\label{kGoodProbLem}
Let $\varepsilon>0$ and $\rho\in(0,1)$ be arbitrary. \Fm, let $k=k(n)$ be \st $n^{-p^{k(n)}}\in o(1)$. Then \tes $n_0\ge1$ \st if $n\ge n_0$ and $k^\p\in\{0,\ldots,k\}$, then the family of $(\varepsilon,k)$-good cubes in $J_{k^\p}$ stochastically dominates a Bernoulli site process on $J_{k^\p}$ with marginal probability $\rho$.
\end{lemma}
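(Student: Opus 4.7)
The plan is a backward induction on $k^\p$, starting from $k^\p = k$, where the assertion is trivial since every index in $J_k$ is $(\varepsilon, k)$-good by definition. At each inductive step I aim to show that, if at level $k^\p+1$ the good cubes dominate a Bernoulli$(\rho_{k^\p+1})$ field, then at level $k^\p$ they dominate a Bernoulli$(\rho_{k^\p})$ field with $\rho_{k^\p}$ only slightly smaller than $\rho_{k^\p+1}$; iterating and controlling the total loss across the $k(n)$ scales then yields the lemma.

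For the induction step, the event that $I\in J_{k^\p}$ is $(\varepsilon,k)$-good splits as $A_I\cap B_I$, where $A_I=\{X^{(n)}\cap (Q_I\times[a_{k^\p+1},a_{k^\p}])=\es\}$ (augmented by the additional mark constraint for $I=\es$), and $B_I$ is the event that no $*$-connected subset of $J_{k^\p+1}$ contained in $Q^{a_{k^\p}}_I$ of size at least $b_{k^\p+1}/4$ contains more than an $\varepsilon$-fraction of $(\varepsilon,k)$-bad cubes. The events $\{A_I\}_{I\in J_{k^\p}}$ are independent of each other and of everything below level $k^\p+1$, while each $B_I$ is measurable with respect to the level-$(k^\p+1)$ goodness pattern inside $Q^{a_{k^\p}}_I$.

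For $\P(A_I^c)$, a direct Poisson computation combined with the heavy-tail asymptotic $\P(R>r)\sim\beta r^{-s}$ yields $\P(A_I^c)\le C a_{k^\p}^d a_{k^\p+1}^{-s}$. Since $p=(s+d)/(2s)$ satisfies $sp>d$ under the standing hypothesis $s>d$, and $a_m\sim n^{p^m}$, this bound behaves like $n^{-p^{k^\p}(sp-d)}$, which under the assumption $p^{k(n)}\log n\to\infty$ tends to zero uniformly in $k^\p\in\{0,\ldots,k(n)\}$. For $\P(B_I^c)$ I invoke the induction hypothesis: the $(\varepsilon,k)$-bad cubes at level $k^\p+1$ are dominated by a Bernoulli$(1-\rho_{k^\p+1})$ family. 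A Peierls-type count bounds the number of $*$-connected subsets of $J_{k^\p+1}$ of size $m$ rooted at a given site by $C_d^m$ for a dimensional constant $C_d$, and the probability that such a subset contains at least $\varepsilon m$ bad cubes is at most $\binom{m}{\lceil\varepsilon m\rceil}(1-\rho_{k^\p+1})^{\varepsilon m}$. Choosing $\rho_{k^\p+1}$ close enough to $1$ makes the effective base of the exponential strictly less than $1$, so summation over $m\ge b_{k^\p+1}/4$ gives $\P(B_I^c)\le C\,b_{k^\p+1}^{d}\,2^{-b_{k^\p+1}/4}$, which vanishes as $b_{k^\p+1}\to\infty$.

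To convert these per-site bounds into the desired stochastic domination I use a Liggett-Schonmann-Stacey style argument in the spirit of \cite[Lemma 2.2]{mensh}: conditional on the level-$(k^\p+1)$ configuration, the $B_I$'s become deterministic and the $A_I$'s are independent Bernoulli, so each $Z^{k^\p}_I:=1_{A_I\cap B_I}$ has conditional mean at least $1-\P(A_I^c)-\P(B_I^c)=:\rho_{k^\p}$; since $Z^{k^\p}_I$ further depends only on a bounded neighborhood of $I$ at the finer scale, a standard finite-range domination transfer promotes this marginal bound into Bernoulli$(\rho_{k^\p})$ domination at level $k^\p$. The main obstacle is verifying that the cumulative loss $\sum_{k^\p=0}^{k(n)-1}\delta_{k^\p}(n)$ remains $o(1)$ as $n\to\infty$, since $k(n)$ may itself diverge, at a rate like $\log\log n/\log(1/p)$; both per-level contributions are largest at the finest scale $k^\p=k(n)-1$, where the event-$A$ term decays polynomially as $n^{-p^{k(n)}(sp-d)}$ and the event-$B$ term decays stretched-exponentially in $b_{k(n)}$, so the factor $k(n)=O(\log\log n)$ is absorbed and for $n$ large enough the iterated marginal stays above the target $\rho$ at every level $k^\p\in\{0,\ldots,k\}$.
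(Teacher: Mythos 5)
Your proposal is correct and follows essentially the same route as the paper: backward induction from the trivial level $k'=k$, decomposition of $(\varepsilon,k)$-goodness into the mark-tail condition $A_I$ and the iterability condition $B_I$, a Poisson/heavy-tail computation for $\P(A_I^c)$, a Peierls count for $\P(B_I^c)$ driven by the finer-level domination, and finally a Liggett--Schonmann--Stacey finite-range transfer. Two bookkeeping points where the paper is tighter are worth noting. First, the paper avoids any cumulative-loss accounting: it fixes $q=2^{-2\cdot 3^d/\varepsilon}$, assumes WLOG $\rho\ge 1-q$, and shows that each of the two factors (short-ranged and iterable) separately dominates a Bernoulli$(\sqrt{\rho})$ process \emph{uniformly} in $k'$ for $n\ge n_0$; since the two conditions are independent, the product gives Bernoulli$(\rho)$ at every level, closing the induction without tracking a decaying $\rho_{k'}$. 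Your accumulating-loss version does go through --- the per-level failure probabilities decay geometrically across scales as $k'$ decreases, so the sum is dominated by the finest-scale term and is $o(1)$ --- but it is more delicate, because the Peierls bound at level $k'$ itself depends on $\rho_{k'+1}$ staying above the threshold $1-q$, and (less visibly) the LSS step introduces an extra, $\rho_{k'+1}$-dependent loss that is not simply additive with $\P(A_I^c)+\P(B_I^c)$; the paper's fixed-point structure sidesteps both. Second, the paper makes explicit the ``dom-bad / dom-iterable'' coupling: the Peierls estimate is carried out on the coupled Bernoulli field, and the fact that actual-bad $\subset$ dom-bad is what licenses transferring the bound. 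Your phrasing (``the probability that such a subset contains at least $\varepsilon m$ bad cubes is at most $\binom{m}{\lceil\varepsilon m\rceil}(1-\rho_{k'+1})^{\varepsilon m}$'') implicitly uses that this event is monotone in the bad cubes; this should be stated, since applying a marginal Bernoulli bound to a dependent field is precisely the step the coupling is there to justify. Finally, your appeal to a finite-range LSS transfer relies on the observation (made by the paper when defining the goodness condition with the cap $r<a_{k'}$ rather than $r<\infty$) that $(\varepsilon,k)$-goodness of $I\in J_{k'}$ depends only on $X^{(n)}$ inside a bounded (in $a_{k'}$ units) neighborhood of $Q_I$; without that deliberate design choice the dependence range would not be bounded.
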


%
\begin{proof}
In order to prove the claim, it is convenient to separate clearly the two conditions used in the definition of $(\varepsilon,k)$-goodness. First, an index $I\in J_{k^\p}$ is said to be \emph{short-ranged} if $X^{(n)}\cap (\T_n\times[a_{k^\p+1},a_{k^\p}])=\es$. Second, $I\in J_{k^\p}$ is said to be \emph{iterable} if for every $*$-connected subset $\gamma\subset J_{k^\p+1}$ that is contained in $Q^{a_{k^\p}}_I$ and is of size at least $b_{k^\p+1}/4$, it holds that $\gamma$ contains at most $\varepsilon\#\gamma$ indices that are $(\varepsilon,k)$-bad. By construction, the configuration of short-ranged indices in $J_{k^\p}$ is independent of the configuration of iterable indices in $J_{k^\p}$. Hence, to obtain the desired stochastic domination of a Bernoulli site process, it suffices to consider the two types of configurations separately.

To be more precise, let $\varepsilon>0$ and $\rho\in(0,1)$ be arbitrary. \Fm, put $q=2^{-2\cdot 3^d/\varepsilon}$. We prove that there exist $n_0\ge1$ (depending only on $d$, $\varepsilon$ and $\rho$) with the following properties, where without loss of generality we may assume that $\rho\ge1-q$.
\begin{enumerate}
\item[1.] Let $n\ge n_0$ and $k^\p\in\{1,\ldots,k\}$. Then the process of short-ranged indices in $J_{k^\p}$ stochastically dominates a Bernoulli site process on $J_{k^\p}$ with marginal probability given by $\sqrt{\rho}$.
\item[2.] Let $n\ge n_0$ and $k^\p\in\{1,\ldots,k\}$. If the process of $(\varepsilon,k)$-good indices in $J_{k^\p+1}$ dominates a Bernoulli site process on $J_{k^\p+1}$ with marginal probability given by $1-q$, then the process of iterable indices in $J_{k^\p}$ dominates a Bernoulli site process on $J_{k^\p}$ with marginal probability given by $\sqrt{\rho}$.
\end{enumerate}
Once these two claims are shown, we conclude by induction.

For the first claim, we note that the configuration of short-ranged elements in $J_{k^\p}$ is already a Bernoulli site process. \Fm, provided that $n\ge1$ is sufficiently large, the marginal probability of failing to be short-ranged is at most 
$$2\beta a_{k^\p}^d a_{k^\p+1}^{-s}\le 2\beta n^{p^{k^\p} d-p^{k^\p}ps}=2\beta n^{-p^{k^\p}\varepsilon_1},$$
where $\varepsilon_1=ps-d>0$. \Ip, the latter expression tends to $0$ uniformly over all $k^\p\in\{1,\ldots,k(n)\}$ as $n\to\infty$.

It remains to prove the second claim. 
By assumption, the process of $(\varepsilon,k)$-good indices in $J_{k^\p+1}$ dominates a Bernoulli site process on $J_{k^\p+1}$ with marginal probability $1-q$. An element $I\in J_{k^\p+1}$ is called \emph{dom-bad} if it is a closed site in this Bernoulli site process. \Fm, we say that $I\in J_{k^\p}$ is \emph{dom-iterable} if for every $*$-connected subset $\gamma\subset J_{k^\p+1}$ that is contained in $Q^{a_{k^\p}}_I$ and consists of at least $b_{k^\p+1}/4$ elements, it holds that $\gamma$ contains at most $\varepsilon\#\gamma$ elements that are dom-bad. 

For any fixed $u\in\{1,\ldots,3^db^d_{k^\p+1}\}$, the cube $Q_I^{a_{k^\p}}$ contains at most $3^db^d_{k^\p+1}2^{(3^d-1)u}$  $*$-connected subsets consisting of precisely $u$ elements, see~\cite[Lemma 9.3]{penrose}. For any such $*$-connected set there exist at most $2^{u}$ possibilities to choose the location of dom-bad sites.
Hence, the probability that $I$ is not dom-iterable is at most 
\begin{align}
\label{kGoodEq}
3^db^d_{k^\p+1}\sum_{u\ge b_{k^\p+1}/4} 2^{(3^d-1)u}2^{u} q^{\varepsilon u}\le 3^db_{k^\p+1}^d2^{1-3^d\lceil b_{k^\p+1}/4\rceil}.
\end{align}
If $k^\p\in\{1,\ldots,k(n)\}$ and $n$ is sufficiently large, then $b_{k^\p}\ge n^{p^{k^\p}(1-p)}/2$. Therefore, we see that~\eqref{kGoodEq} tends to $0$ uniformly over all $k^\p\in\{1,\ldots,k(n)\}$ as $n\to\infty$. The proof of the second claim is concluded by invoking~\cite[Corollary 1.4]{domProd}. Alternatively, it is also possible to apply~\cite[Theorem 2.1]{mensh}, which also yields an explicit bound for the value of $n$ that is needed to achieve the desired domination.
\end{proof}

Now, we have collected all preliminary results required to complete the proof of Theorem~\ref{subCrit}. 

\begin{proof}[Proof of Theorem~\ref{subCrit}]
Define $k=k(n)=\lfloor (\log^{(4)}n-\log^{(2)}n)/\log p\rfloor$, where for $m\ge1$ we denote by $\log^{(m)}n$ the $m$-fold iterated logarithm. \Ip, 
$$n^{p^k}\in \big(\log^{(2)}n^{},(\log^{(2)}n)^{p^{-1}}\big).$$
\Mo, we conclude from Lemmas~\ref{indChemLem} and~\ref{kGoodProbLem} that whp the number of hops between $q(-n\mathsf{e}_1/4)$ and $q(n\mathsf{e}_1/4)$ is at least
\begin{align*}
(1-1500^d\varepsilon)^{k-1}\lambda_{1,k}(\gamma)\prod_{j=2}^kb_j&\ge\tfrac{1}{4}(1-1500^d\varepsilon)^{k-1}n^{1-p^k}\\
&\ge\tfrac{1}{4} (1-1500^d\varepsilon)^{k-1}n(\log^{(2)} n)^{-p^{-1}}.
\end{align*}
It thus remains to provide a suitable lower bound for $(1-1500^d\varepsilon)^k$. Indeed, 
\begin{align*}
k\log(1-1500^d\varepsilon)\ge(-2\log(1-1500^d\varepsilon)/\log p)\log^{(2)}n,
\end{align*}
\sot choosing $\varepsilon>0$ sufficiently close to $0$ to ensure that $-\log(1-1500^d\varepsilon)/\log p\ge -\alpha/4$ shows that $(1-1500^d\varepsilon)^k\ge (\log n)^{-\alpha/2}$. 
\end{proof}

\subsection{Regime $s=d$}
In the present subsection, we consider the critical regime, where $s=d$ and provide a proof of Theorem~\ref{sd1Thm}. We first explain the main ideas before presenting all the details. If \tes $x=(\xi,r)\in X^{(n)}$ with $r\ge\sqrt{d}n$, then all points of $X^{(n)}$ are connected to $x$ directly by an edge. On the other hand, if $\T_n$ is not covered by a single ball, then the torus can be subdivided into smaller subcubes, and we try to cover these subcubes by smaller balls. While some are now covered, others will stay uncovered. At this points one proceeds iteratively, subdividing the remaining subcubes into subsubcubes and aiming at covering these smaller cubes by smaller balls. We claim that this algorithm terminates with high probability, yielding a connected random geometric graph whose diameter is bounded from above by the total number of subcubes introduced in this construction.

Similar to Section~\ref{subCritReg}, in order to make these arguments rigorous, it is useful to highlight a link to fractal percolation. However, now the fractal percolation process is considerably simpler, since each occurring subcube is subdivided into precisely $2^d$ subsubcubes, irrespective of the level, in which the original subcube is located. In the present setting, the total index set $\mc{J}$ is therefore given by $\mc{J}=\cup_{m\ge0}\{0,1\}^{md}$, where the symbol $\cup$ is interpreted as disjoint union. \Fm, for each $I=(i_1,\ldots,i_m)\in\mc{J}$ we put 
$$Q_I=(-n/2,-n/2,\ldots,-n/2)+n\sum_{j=1}^m2^{-j}i_j+[0,n2^{-m}]^d.$$

%
%

\begin{proof}[Proof of Theorem~\ref{sd1Thm}]
In order to make the sketch presented at the beginning of the subsection rigorous, we introduce a fractal percolation process $\{Z(I)\}_{I\in\mc{J}}$. For $k\ge0$ and $I\in \{0,1\}^{kd}$ put $Z(I)=0$ if and only if \tes $(\xi,r)\in X$ \st $\xi\in Q_{I}$ and $r\in(\sqrt{d}2^{-k+1}n,\sqrt{d}2^{-k+2}n)$. 
The number of points $(\xi,r)\in X$ \st $\xi\in Q_{I}$ and $r\in (\sqrt{d}2^{-k+1}n,\sqrt{d}2^{-k+2}n)$ is Poisson distributed with parameter
\begin{align*}
n^d2^{-kd} (\P(R>\sqrt{d}2^{-k+1}n)-\P(R>\sqrt{d}2^{-k+2}n)).
\end{align*}
Provided that $\sqrt{d}2^{-k+1}n\ge t_0(1/2)$ this expression can be bounded from below by
\begin{align*}
n^d2^{-kd} d^{-d/2}\beta(2^{kd-d-1}n^{-d}-3\cdot2^{kd-2d-1}n^{-d})\ge \beta d^{-d/2}2^{-2d-1}.
\end{align*}
Hence, if additionally $\beta\ge d^{d/2}2^{2d+1}(d+1)\log 2$, then
\begin{align}
\label{lowCovProp}
\P(Z(I)=1)\le \exp(-\beta d^{-d/2}2^{-2d-1})\le 2^{-d-1}.
\end{align}
Since each cube $Q_I$ gives rise to $2^d$ subcubes, this already provides a first strong indication for the relationship to subcritical Galton-Watson processes.

To make this precise, it is convenient to introduce some auxiliary structures. By $C_k$, $k\ge0$ we denote the union of retained cubes at the $k$th level. That is, $C_0=[-n/2,n/2]^d$ and if $k\ge0$, then 
$$C_{k+1}=C_k\cap \bigcup_{I\in \{0,1\}^{(k+1)d}:\, Z(I)=1} Q_I.$$
\Fm, we also consider the index sets $A^{(0)}_k,A^{(1)}_k\subset\{0,1\}^{kd}$ whose associated cubes are discarded/retained at the $k$th step. That is $A^{(\sigma)}_k=\{I\in\{0,1\}^{kd}: Q_I\subset C_{k-1}\text{ and } Z(I)=\sigma\}$. Finally, we construct a backbone $B\subset X^{(n)}$ \st if $C_m=\es$ for some $m\ge1$, then a) any point of $X^{(n)}$ is connected by an edge in $G(X^{(n)})$ to some point in $B$ and b) $B$ is a connected set in $G(X^{(n)})$. To construct $B$, we choose for each $I\in A^{(0)}_k$ a point $x_I\in X^{(n)}$ \st $\xi\in Q_{I}$ and $r\in(\sqrt{d}2^{-k+1}n,\sqrt{d}2^{-k+2}n)$. Then, we denote by $B$ the collection consisting of the points $x_I$, where $I\in A^{(0)}_k$ for some $k\ge0$. See Figure~\ref{bbFig} for an illustration of the construction of $B$.

\Ip,
$$\#B=\sum_{k\ge0} \#A^{(0)}_{k+1}\le 2^d\sum_{k\ge0} \#A^{(1)}_k.$$
Now, we show that the set $B$ has the desired properties. Since $C_m=\es$, we conclude that the union of the cubes $Q_I$, with $I\in A^{(0)}_k$ for some $k\ge0$ covers $\T_n$. If $x=(\xi,r)\in X^{(n)}$ is arbitrary, then by choosing $I\in A^{(0)}_k$ \st $\xi\in Q_I$ and noting that $Q_I\subset \BT_{r_I}(\xi_I)$, we see that $x$ is connected to $x_I$ by an edge in $G(X^{(n)})$. \Mo, if $I_1\in A^{(0)}_{k_1}$ and $I_2\in A^{(0)}_{k_2}$ are \st $k_1\le k_2$ and $Q_{I_1}\cap Q_{I_2}\ne\es$, then $Q_{I_1}\cup Q_{I_2}\subset \BT_{r_{I_1}}(\xi_{I_1})$. Hence, $x_{I_1}$ and $x_{I_2}$ are connected by an edge in $G(X^{(n)})$. Since for any $I\in A^{(0)}_k$ and $I^\p\in A^{(0)}_{k^\p}$ we can find $I_1=I,\ldots, I_m=I^\p$ \st $I_j\in A^{(0)}_{k_j}$ and $Q_{I_j}\cap Q_{I_{j+1}}\ne\es$, this proves the second claim on $B$.

\begin{figure}[!htpb]
\centering
\includegraphics[width=9cm]{./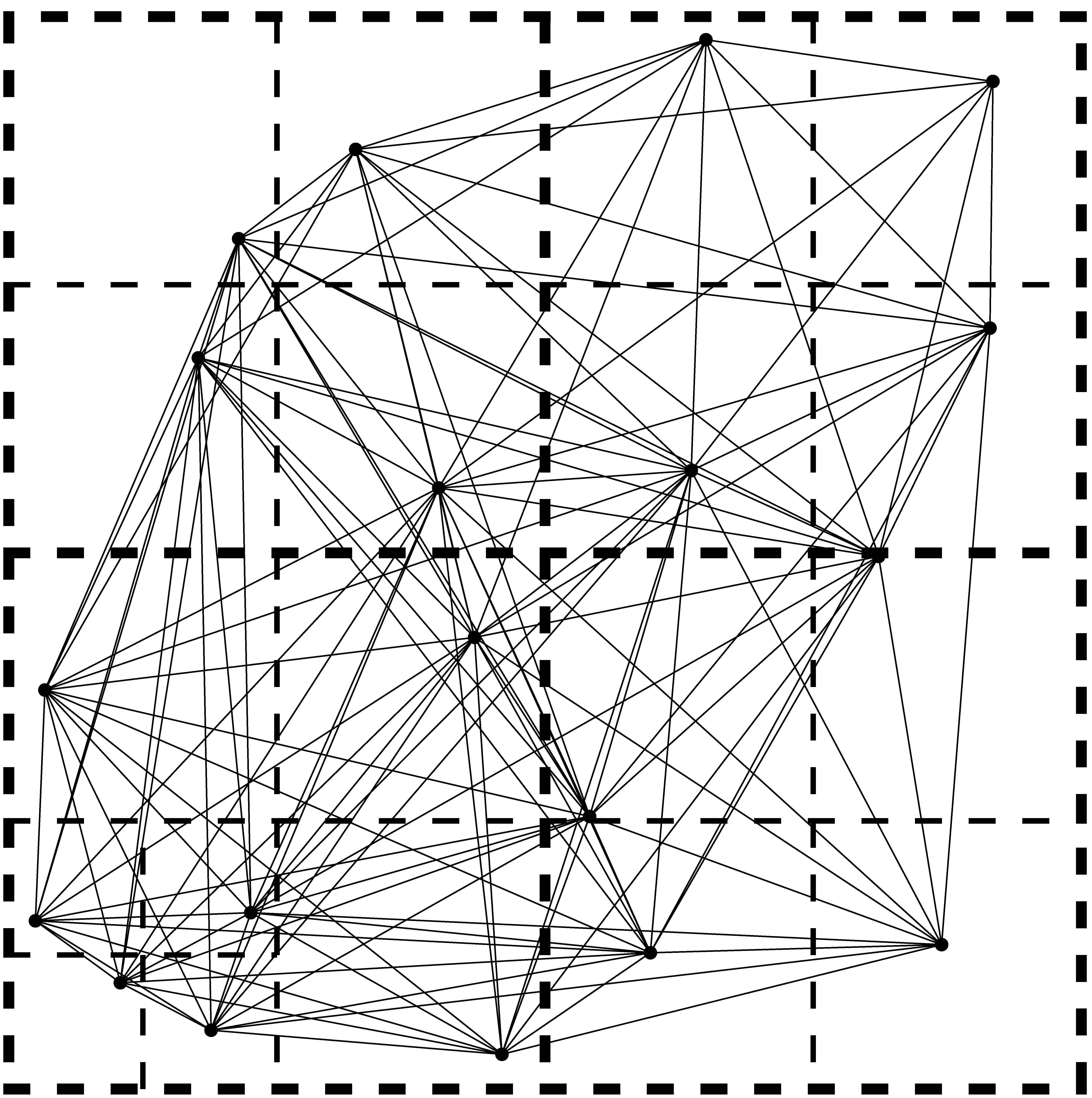}
\caption{Backbone of a scale-free Gilbert graph}
\label{bbFig}
\end{figure}

Hence, if \tes $m\ge1$ \st $C_m=\es$, then the diameter of $G(X^{(n)})$ is at most $2+\#B$. It remains to compare $\#B$ to the total progeny of a subcritical Galton-Watson process; this analysis will also show that the probability that $C_m=\es$ for some $m\ge1$ tends to $1$ as $n\to\infty$. Indeed, consider the subcritical Galton-Watson process whose offspring distribution is Binomial with $2^d$ trials and success probability $2^{-d-1}$.
Put $k=k_0(n)=\lfloor \log(2\sqrt{d}n/t_0(1/2))/\log 2\rfloor$.
 It follows from~\eqref{lowCovProp} and the independence property of the Poisson point process that \tes a coupling between $X^{(n)}$ and the Galton-Watson process \st for every $k\le k_0$ we have $T_k\ge\sum_{i=0}^k\#A^{(1)}_i$. Here $T_k$ denotes the total progeny of the Galton-Watson process up to the $k$th step. \Ip,
$$\max\{\P(C_{k_0}\ne\es),\P(\#B>2^dT_{k_0})\}\le \P(T_{k_0-1}\ne T_{k_0})\le 2^{-k_0}\le t_0(1/2)/(\sqrt{d}n),$$
where the second inequality uses a well-known result for Galton-Watson processes, see e.g.~\cite[Theorem 5.1]{harBra}.
\end{proof}

\section{Using a multi-level topology for redundancy elimination}
\label{multiSec}
In the present section, we provide a comparison between the scale-free Gilbert graph $G(X^{(n)})$ and the thinned scale-free Gilbert graph $G^\p(X^{(n)})$, where some redundant edges are removed. First, we prove Theorem~\ref{redElThm}.

\begin{proof}[Proof of Theorem~\ref{redElThm}]
In the following we fix $t_0=t_0(1/2)$.  
We first show that we can neglect contributions $b_{1}(n)$ to $\E D^{\p,(\alpha)}_{\out,n}$ coming from points $(\eta,t)\in X^{(n)}$ \st $t>\sqrt{|\eta|}$. Indeed,
\begin{align*}
b_1(n)&\le\int_0^\infty\int_{\BT_r(o)}|\eta|^\alpha\P(R>\sqrt{|\eta|})\d\eta\P_R(\d r)\\
&=\int_{\T_n}|\eta|^\alpha\P(R>|\eta|)\P(R>\sqrt{|\eta|})\d\eta\\
&=\int_{\T_n\setminus \BT_{t^2_0}(o)}|\eta|^\alpha\P(R>|\eta|)\P(R>\sqrt{|\eta|})\d\eta+\int_{\BT_{t^2_0}(o)}|\eta|^\alpha\P(R>|\eta|)\P(R>\sqrt{|\eta|})\d\eta.
\end{align*}
Since the second expression remains bounded as $n\to\infty$, it suffices to consider the first. Then, 
\begin{align*}
&\int_{\T_n\setminus \BT_{t^2_0}(o)}|\eta|^\alpha\P(R>|\eta|)\P(R>\sqrt{|\eta|})\d\eta\le 2\beta^2 \int_{\T_n\setminus \BT_{t^2_0}(o)} |\eta|^{\alpha-3d/2}\d\eta \\
&\qquad=2\beta^2\int_{\T_n\setminus \BT_{n/2}(o)}|\eta|^{\alpha-3d/2}\d\eta+2\beta^2\int_{\BT_{n/2}(o)\setminus\BT_{t^2_0}(o)}|\eta|^{\alpha-3d/2}\d\eta,
\end{align*}
and we consider the two summands separately. Clearly, the first is in $O(n^{\alpha-d/2})$. For the second we obtain that
\begin{align*}
\int_{\BT_{n/2}(o)\setminus\BT_{t^2_0}(o)}|\eta|^{\alpha-3d/2}\d\eta&=d\kappa_d\int_{t_0^2}^{n/2}u^{\alpha-d/2-1}\d u,
\end{align*}
which is in $O(1)$ if $\alpha=0$ and in $O(n^{\max\{0,\alpha-d/4\}})$ if $\alpha>0$.
\Fm, we can clearly neglect the contributions $b_{2}(n)$ to $\E D^{\p,(\alpha)}_{\out,n}$ coming from points $(\eta,t)\in X^{(n)}$ with $|\eta|\le\min\{e^{12}, t_0^2\}$. 

It remains to obtain bounds for the contributions that are covered by neither $b_1(n)$ nor $b_2(n)$. For any $\gamma>0$ and $\xi\in [-n/2,n/2]^d$ we put
$$S_{\gamma,\eta}=\{\zeta\in\R^d: \sqrt{|\eta|}\le|\zeta-\eta|\le |\eta|\text{ and }\angle (\zeta-\eta,-\eta) \in[-\gamma,\gamma]\};$$
see Figure~\ref{angleFig} for an illustration of $S_{\gamma,\eta}$. If $\gamma$ is sufficiently small, then for every $n\ge1$ and $\eta\in [-n/2,n/2]^d$ the sector $S_{\gamma,\eta}$ is contained in $[-n/2,n/2]^d$. In the following, we fix any such value $\gamma_0$  and put $S_{\eta}=S_{\gamma_0,\eta}$.
It will also be convenient to denote by $S^\p_{\eta}=\partial B_1(o)\cap (|\eta|^{-1}(S_{\eta}-\eta))$ the intersection of the unit sphere with a shifted and scaled copy of $S_{\eta}$. Finally, we denote by $\sigma_0=\nu_{d-1}(S^\p_\eta)$ the surface area of $S^\p_\eta$, a quantity which is independent of $\eta$.

\begin{figure}[!htpb]
\centering
\begin{tikzpicture}[scale=1.85]
\draw (0,0) rectangle (2,2); 
\filldraw[black!40!white] (1.0,1.0) arc (-125:-145:1.2207cm) --(1.7,2);
\filldraw[black!40!white] (1.0,1.0) arc (-125:-105:1.2207cm) --(1.7,2);
\filldraw[white](1.35,1.5) arc (-125:-104:0.61cm)--(1.7,2);
\filldraw[white](1.35,1.5) arc (-125:-146:0.61cm)--(1.7,2);
\fill (1.7,2) circle (1pt);
\draw[dashed] (1,1)--(1.7,2);
\draw (1.35,1.5) arc (-125:-145:0.61cm)--(1.7,2);
\draw (1.35,1.5)--(1.7,2);

\coordinate[label=90:\small{$\eta$}] (u) at (1.7,2);
\coordinate[label=90:\small{$\gamma$}] (u) at (1.3,1.51);
\end{tikzpicture}
\caption{Construction of the set $S_{\gamma,\eta}$ (shaded)}
\label{angleFig}
\end{figure}
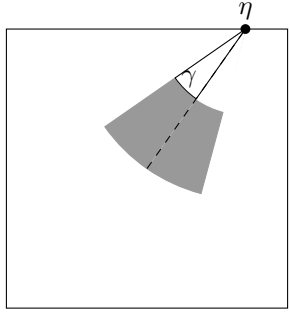

If $(\eta,t)$ is an out-neighbor of $(o,R^*)$ in $G^\p(\{o,R^*\}\cup X^{(n)})$ \st $t\le \sqrt{|\eta|}$, then $X^{(n)}\cap A_{\eta,t}=\es$, where 
$$A_{\eta,t}=\{(\zeta,w)\in S_{\eta}\times[0,R^*):\,w>|\zeta-\eta|\}.$$

\Fm, we note that conditioned on $R^*=r$ the number of points of $X^{(n)}$ contained in $A_{\eta,t}$ is a Poissonian random variable with mean
\begin{align*}
\int_{S_{\eta}} \P(R\in (|\zeta-\eta|,r)) \d \zeta&=\sigma_0\int_{\sqrt{|\eta|}}^{|\eta|}u^{d-1}\P(R\in(u,r))\d u.
\end{align*}
If $n\ge1$ is sufficiently large and $|\eta|\ge t_0^2$, then the right-hand side is at least
$$ \beta\sigma_0\int_{\sqrt{|\eta|}}^{|\eta|} \tfrac{1}{2}u^{-1}-\tfrac{3}{2}u^{d-1}r^{-d} \d u\ge \beta\sigma_0(\tfrac{1}{4}\log |\eta| -\tfrac{3}{2})\ge \tfrac{1}{8}\beta\sigma_0\log |\eta|.$$
Putting $b_3(n)=\E D^{\p,(\alpha)}_{\out,n}-b_1(n)-b_2(n)$, we therefore obtain that
\begin{align*}
b_3(n)&\le \int_0^\infty\int_{\BT_{r}(o)\setminus \BT_{t^2_0}(o)}|\eta|^\alpha\exp(-\tfrac{1}{8}\beta\sigma_0\log |\eta|) \d\eta \P_R(\d r)\\
&= \int_{\T_n\setminus \BT_{t^2_0}(o)}|\eta|^{\alpha-\beta\sigma_0/8} \P(R>|\eta|)\d\eta\\
&\le 2\beta\int_{\T_n\setminus \BT_{n/2}(o)} |\eta|^{\alpha-d-\beta\sigma_0/8}\d\eta+2\beta\int_{ \BT_{n/2}(o)\setminus \BT_{t^2_0}(o)} |\eta|^{\alpha-d-\beta\sigma_0/8}\d\eta\\
&\le 2\beta\int_{\T_n\setminus \BT_{n/2}(o)} |\eta|^{\alpha-d-\beta\sigma_0/8}\d\eta+2\beta d\kappa_d\int_{t^2_0}^{n/2} u^{\alpha-\beta\sigma_0/8-1}\d u.
\end{align*}
Observing that the last line is in $O(1)$ if $\alpha=0$ and in $O(n^{\max\{0,\alpha-\beta\sigma_0/16\}})$ if $\alpha>0$ completes the proof.
 \end{proof}

When passing from $G(X^{(n)})$ to $G^\p(X^{(n)})$ only redundant edges are removed, in the sense that connected components remain unchanged.
\begin{proposition}
\label{conProp}
With probability $1$ the graphs $G(X^{(n)})$ and $G^\p(X^{(n)})$ have the same connected components.
\end{proposition}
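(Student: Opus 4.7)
The inclusion $G^\p(X^{(n)})\subseteq G(X^{(n)})$ is built into the definition, so every connected component of $G^\p(X^{(n)})$ is contained in a connected component of $G(X^{(n)})$. To show the two graphs have the \emph{same} connected components it therefore suffices to argue that whenever two points $x,y\in X^{(n)}$ are joined by a (directed) edge in $G(X^{(n)})$, they are connected by a path in the undirected version of $G^\p(X^{(n)})$. Because the mark distribution is assumed absolutely continuous, almost surely $X^{(n)}$ is finite and its radii are pairwise distinct, and I would argue deterministically conditional on this full-measure event.

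Enumerate the points as $x_1=(\xi_1,r_1),\ldots,x_N=(\xi_N,r_N)$ with $r_1>r_2>\cdots>r_N$, and consider a $G$-edge from $x_i$ to $x_j$, so $\xi_j\in\BT_{r_i}(\xi_i)$. If $i>j$, then $r_i<r_j$ forces $\xi_i\in\BT_{r_j}(\xi_j)$, so there is also a $G$-edge from $x_j$ to $x_i$; relabelling, I may assume $i<j$. The plan is to establish by strong induction on the rank gap $j-i$ that $x_i$ and $x_j$ lie in the same component of $G^\p(X^{(n)})$. The base case $j-i=1$ is immediate: no index $k$ lies strictly between $i$ and $j$, so no intermediate point $(\zeta,w)\in X^{(n)}$ with $w\in(r_j,r_i)$ can exist, and thus $(x_i,x_j)$ is itself an edge of $G^\p(X^{(n)})$.

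For the inductive step, if $(x_i,x_j)$ is already an edge of $G^\p(X^{(n)})$ there is nothing to prove. Otherwise, the failure of the edge in $G^\p$ produces, by definition, an intermediate point $x_k=(\zeta,w)\in X^{(n)}$ with $\zeta\in\BT_{r_i}(\xi_i)$, $w\in(r_j,r_i)$, and $\xi_j\in\BT_w(\zeta)$. The condition $w\in(r_j,r_i)$ forces $i<k<j$, so both $(x_i,x_k)$ and $(x_k,x_j)$ are $G$-edges of strictly smaller rank gap, to which the induction hypothesis applies. Concatenating the resulting $G^\p$-paths through $x_k$ yields the desired connection.

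I do not anticipate a serious obstacle here: the argument is mainly bookkeeping, and the only delicate point is choosing the right monotone quantity for the induction. The rank gap $j-i$ is the natural choice precisely because the constraint $w\in(r_j,r_i)$ forces the intermediate $x_k$ to have rank strictly between those of $x_i$ and $x_j$, so both subproblems are genuinely smaller. Alternative measures such as the cardinality of $X^{(n)}\cap\BT_{r_i}(\xi_i)$ do not evidently decrease when passing to the subproblem $(x_k,x_j)$, since the ball $\BT_w(\zeta)$ need not be contained in $\BT_{r_i}(\xi_i)$, so would not close the induction.
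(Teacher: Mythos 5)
Your proof is correct and essentially the same as the paper's: both split a failed $G^\p$-edge into two $G$-edges through the blocking point $z=(\zeta,w)$ and apply a well-ordering argument to conclude. The paper takes a minimal counter-example with respect to the radius difference $|r-t|$ rather than inducting on the rank gap $j-i$, but since $w\in(t,r)$ strictly decreases both quantities, the two arguments are interchangeable.
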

\begin{proof}
It suffices to show that if $x=(\xi,r), y=(\eta,t)\in X^{(n)}$ are connected by an edge in $G(X^{(n)})$, then $x$ and $y$ are contained in the same connected component of $G^\p(X^{(n)})$. Suppose that this was false. Then, choose a counter-example with the property that $|r-t|$ is minimal. Without loss of generality we may assume $r\ge t$. Since $x=(\xi,r)$ and $y=(\eta,t)$ are not connected by an edge in $G^\p(X^{(n)})$, \tes a point $z=(\zeta,w)\in X^{(n)}\cap(\BT_{r}(\xi)\times(t,r))$ \st $\eta\in\BT_{w}(\zeta)$. By the minimality of the counter-example we see that both $y$ and $z$ as well as $z$ and $x$ are contained in the same connected component of $G^\p(X^{(n)})$. Therefore also $x$ and $y$ are contained in the same connected component of $G^\p(X^{(n)})$, yielding a contradiction to the initial assumption.
\end{proof}

Next, we show that when moving from $G(X^{(n)})$ to $G^\p(X^{(n)})$ chemical distances increase at most by a logarithmic factor in the size of the torus. To achieve this goal, we make use of a variant of the descending chains concept introduced in~\cite{lilypond}. To be more precise, let $x_1=(\xi_1,r_1),\ldots,x_m=(\xi_m,r_m)$ be \st $r_1>r_2>\cdots>r_m$. Then $x_1,\ldots,x_m$ are said to form a \emph{toroidal descending chain} if $\xi_{i+1}\in \BT_{r_{i}}(\xi_{i})$ for every $i\in\{1,\ldots,m-1\}$. In the following result, we show that there is a close relationship between the existence of short connections in $G^\p(X^{(n)})$ and the absence of long toroidal descending chains.
\begin{lemma}
\label{detDescLem}
Let $x=(\xi,r)\in X^{(n)}$ and $y=(\eta,t)\in X^{(n)}\cap(\BT_{r}(\xi)\times(0,r))$. If $m\ge1$ is \st the chemical distance between $x$ and $y$ in $G^\p(X^{(n)})$ is larger than $m$, then \tes a toroidal descending chain starting from $x$ and consisting of more than $m$ points.
\end{lemma}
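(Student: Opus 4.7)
The plan is to construct the descending chain directly by an iterative insertion argument: the trivial sequence $(x,y)$ is already a descending chain of two points, and whenever one of its consecutive pairs fails to be an edge of $G^\p(X^{(n)})$, the blocker clause defining $G^\p$ supplies exactly the object needed to insert and strictly lengthen the chain while preserving the descending-chain structure. No induction on $m$ is required.

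First, I set $C_0=(x,y)$; by hypothesis $r>t$ and $\eta\in \BT_r(\xi)$, so $C_0$ is a descending chain of two points. I then iterate the following step. If every consecutive pair $(a,b)=((\xi_a,r_a),(\xi_b,r_b))$ of the current chain $C_i$ is already a directed edge of $G^\p(X^{(n)})$, I halt; otherwise I pick such a pair that fails to be an edge of $G^\p$. Since $\xi_b\in \BT_{r_a}(\xi_a)$ and $r_b<r_a$ hold by construction of $C_i$, the failure must come from the blocker clause, yielding $z=(\zeta,w)\in X^{(n)}$ with $\zeta\in \BT_{r_a}(\xi_a)$, $w\in(r_b,r_a)$, and $\xi_b\in \BT_w(\zeta)$. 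I insert $z$ between $a$ and $b$ to obtain $C_{i+1}$; the triple $(a,z,b)$ satisfies the descending-chain conditions directly from the blocker condition, so $C_{i+1}$ is again a descending chain with $|C_{i+1}|=|C_i|+1$. The procedure terminates because the radii along any $C_i$ are strictly decreasing, so all its vertices are distinct, and the inserted $z$ has radius $w\in(r_b,r_a)$ disjoint from the radii of every other vertex of $C_i$; hence $|C_i|\le\#X^{(n)}$, which is almost surely finite.

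Let $C_k$ be the chain at termination. By construction, every consecutive pair of $C_k$ is a directed edge of $G^\p(X^{(n)})$, so $C_k$ gives a path from $x$ to $y$ in $G^\p(X^{(n)})$ of length $|C_k|-1$. Therefore $|C_k|-1$ bounds the chemical distance from $x$ to $y$ in $G^\p(X^{(n)})$ from above, and the hypothesis that this chemical distance exceeds $m$ gives $|C_k|>m+1$, so $C_k$ is a toroidal descending chain starting at $x$ and consisting of more than $m$ points, as required. The only step worth flagging as an obstacle is the observation that the blocker supplied by the definition of $G^\p$ is exactly what is needed to lengthen the chain while maintaining its descending-chain structure; this reduces to an immediate side-by-side comparison of the two definitions.
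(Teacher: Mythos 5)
Your proof is correct and takes essentially the same approach as the paper: both iteratively insert the blocker supplied by the definition of $G^\p$ between a non-adjacent consecutive pair of a descending chain, starting from $(x,y)$. The only difference is bookkeeping: the paper runs exactly $m$ insertion steps, maintaining the invariant that some consecutive pair is still a non-edge (which must hold because otherwise the chain would already witness a chemical distance $\le m$), whereas you iterate until every consecutive pair is an edge, invoke a.s.\ finiteness of $X^{(n)}$ for termination, and then read off the same contradiction from the bound $|C_k|-1\ge\text{chemical distance}>m$. These are contrapositives of one another and constitute the same argument.
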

\begin{proof}
We proceed similarly to~\cite[Lemma 10]{aldLin}.
Inductively, we construct points $\big\{x^{(i)}_j\big\}_{0\le j\le i }=\big\{(\xi^{(i)}_j,r^{(i)}_j)\big\}_{0\le j\le i}$ \st for every $i\in\{1,\ldots,m\}$ we have
\begin{enumerate}
\item $r^{(i)}_0>\cdots>r^{(i)}_i$,
\item $\xi^{(i)}_{j+1}\in \BT_{r^{(i)}_{j}}(\xi_j^{(i)})$ \fa $j\in\{1,\ldots,i-1\}$, and
\item \tes $j\in\{1,\ldots,i-1\}$ \st $x^{(i)}_j$ and $x^{(i)}_{j+1}$ are not connected by an edge in $G^\p(X^{(n)})$.
\end{enumerate}
Indeed, for the induction start we just choose $x^{(1)}_0=x$ and $x^{(1)}_1=y$. Next, suppose that $i\le m$ and that we have constructed points $\big\{x^{(i-1)}_{j}\big\}_{0\le j\le i}$ in $X^{(n)}$ satisfying properties (1)-(3). Choose $j_0\in\{0,\ldots,i-2\}$ \st  $x^{(i-1)}_{j_0}$ and $x^{(i-1)}_{j_0+1}$ are not connected by an edge in $G^\p(X^{(n)})$. By definition of $G^\p(X^{(n)})$ \tes $x^\p=\(\xi^\p,r^\p\)\in X^{(n)}\cap\Big(\BT_{r^{(i-1)}_{j_0}}(\xi^{(i-1)}_{j_0})\times \(r^{(i-1)}_{j_0+1},r^{(i-1)}_{j_0}\)\Big)$ \st $\xi^{(i-1)}_{j_0+1}\in \BT_{r^\p}(\xi^\p)$. We put
$$x^{(i)}_{j}=
\begin{cases}
x^{(i-1)}_j&\text{if $j\le j_0$},\\
x^\p&\text{if $j=j_0+1$},\\
x^{(i-1)}_{j-1}&\text{if $j>j_0+1$}.
\end{cases}$$
Clearly, properties (i) and (ii) are satisfied. Since a violation of property (iii) would imply that the chemical distance between $x$ and $y$ in $G^\p(X^{(n)})$ is at most $m$, this completes the proof.
\end{proof}

For the applicability of Lemma~\ref{detDescLem}, it is important to show that long toroidal descending chains can occur only with small probability.
\begin{lemma}
\label{probDescChainLem}
There exists a constant $c_1>0$ \st whp $X^{(n)}$ does not contain a toroidal descending chain consisting of more than $c_1\log n$ elements.
\end{lemma}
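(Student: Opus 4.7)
The plan is to bound $E_m$, the expected number of toroidal descending chains of length $m$ in $X^{(n)}$, and apply Markov's inequality with $m=\lceil c_1\log n\rceil+1$ for $c_1$ sufficiently large. By the multivariate Mecke formula, bounding each toroidal ball volume by $\kappa_d r_i^d$,
\begin{align*}
E_m\le n^d\kappa_d^{m-1}\,\E\Big[\prod_{i=1}^{m-1}R_i^d\,\mathbf{1}_{R_1>R_2>\cdots>R_m}\Big],
\end{align*}
where $R_1,\ldots,R_m$ are iid copies of $R$.

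The crux is a symmetrization argument. Writing $R_{(1)}\le\cdots\le R_{(m)}$ for the order statistics of $R_1,\ldots,R_m$, on the event $\{R_1>\cdots>R_m\}$ one has $\prod_{i=1}^{m-1}R_i^d=\prod_{k=2}^m R_{(k)}^d$, which is a symmetric function of $(R_1,\ldots,R_m)$. Averaging over the $m!$ orderings and then conditioning on which variable attains the minimum yields
\begin{align*}
\E\Big[\prod_{i=1}^{m-1}R_i^d\,\mathbf{1}_{R_1>\cdots>R_m}\Big]=\frac{1}{m!}\,\E\Big[\prod_{k=2}^{m}R_{(k)}^d\Big]\le\frac{(\E R^d)^{m-1}}{(m-1)!}.
\end{align*}
This factorial decay is essential: a na\"ive branching-style estimate would yield only the bound $(\kappa_d\E R^d)^{m-1}$, which is useless once $\kappa_d\E R^d\ge 1$.

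For $s>d$ we have $\E R^d<\infty$, so Stirling gives $\log E_m\le d\log n-c_1\log n\log\log n\cdot(1+o(1))\to-\infty$ for any $c_1>0$. For $s=d$ we first truncate: whp every radius of a point of $X^{(n)}$ is at most $T=n^{(d+2)/s}$, since the expected number of exceptional points is $n^d\beta T^{-s}=O(n^{-2})$. On this event we may replace $R$ by $R\wedge T$ throughout, so the bound becomes $n^d(\kappa_d C\log n)^{m-1}/(m-1)!$ for some $C=C(\beta,d)$, which tends to zero once $c_1$ is chosen large enough in terms of $\beta$, $d$ and $\kappa_d$. Markov's inequality then concludes; the main obstacle is precisely securing the symmetrization step that provides the crucial factorial decay, without which the heavy-tailed nature of $R$ prevents any uniform control on $E_m$.
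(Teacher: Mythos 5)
Your proposal is correct and takes essentially the same route as the paper: bound the expected number of $m$-element descending chains via the multivariate Mecke formula, exploit the ordering constraint $R_1>\cdots>R_m$ to gain a crucial $1/m!$ factor, and conclude via Stirling and Markov. Your order-statistics symmetrization is a tidy repackaging of the paper's direct iterated-integral computation $\int\cdots\int \mathbf{1}_{r_1>\cdots>r_m}\prod r_i^d\,\P_R(\d r_i)=\frac{1}{m!}\big(\int r^d\,\P_R(\d r)\big)^m$; the only substantive difference is bookkeeping around the truncation, where you use a single threshold $T=n^{(d+2)/s}$ while the paper splits radii into the three ranges $[0,t_0]$, $(t_0,n^{3/4})$ and $(n^{3/4},\infty)$.
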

\begin{proof}
We distinguish several cases depending on the radii occurring in the chain. First, note that the probability that \te distinct $(\xi,r),(\eta,t)\in X^{(n)}$ \st $\min\{r,t\}>n^{3/4}$ is at most $n^{2d}\P(R>n^{3/4})^2$, which tends to $0$ as $n\to\infty$. Hence, it suffices to consider toroidal descending chains with radii bounded above by $n^{3/4}$. Next, we note that the expected number of toroidal descending chains consisting of $m+1$ steps and where all radii are bounded above by $t_0=t_0(1/2)$ is at most 
${n^d}(\kappa_dt_0^d)^m/{m!}.$
Using Stirling's formula, we see that when putting $m=\log n$, this expression tends to $0$ as $n\to\infty$. Finally, the expected number of toroidal descending chains consisting of $m\ge1$ steps, and where all radii are contained in $(t_0,n^{3/4})$ is at most
\begin{align*}
&\int_{\T_n}\int_{t_0}^{n^{3/4}}\int_{\BT_{r_1}(\xi_1)}\cdots\int_{t_0}^{r_{m-1}}\int_{\BT_{r_m}(\xi_m)}1\d\xi_{m+1}\P_R(\d r_m)\cdots\d\xi_2\P_R(\d r_1)\d\xi_1\\
&\qquad= n^d\kappa_d^m \int_{t_0}^{n^{3/4}}\cdots\int_{t_0}^{n^{3/4}}1_{r_1>\cdots>r_m}r_1^d\cdots r_m^d\P_R(\d r_m)\cdots\P_R(\d r_1)\\
&\qquad=\frac{n^d\kappa_d^m}{m!} \Big(\int_{t_0}^{n^{3/4}}r^d \P_R(\d r)\Big)^m.
\end{align*}
In order to derive an upper bound for the latter expression, we need to investigate $\E R^d 1_{(t_0,n^{3/4})}(R)$. We compute
\begin{align*}
\E R^d 1_{(t_0,n^{3/4})}(R)&=\int_0^{t_0^d}\P(R>t_0)\d t+\int_{t_0^d}^{n^{3d/4}}\P(R^d>t)\d t\\
&\le t_0^d+2\beta \int_{1}^{n^{3d/4}}t^{-1}\d t\\
&\le 4d\beta\log n,
\end{align*}
provided that $n\ge1$ is sufficiently large. \Ip, the expected number of toroidal chains consisting of $m$ steps, and where all radii are contained in $(t_0,n^{3/4})$ is at most 
$$n^d (4d\kappa_d\beta\log n)^m/m!,$$
and Stirling's formula implies that when putting $m=c\log n$ this expression is at most 
$$\exp(d\log n +c(\log n)(\log(4de\kappa_d\beta\log n)-\log (c \log n))).$$
Finally, the latter expression tends to $0$ as $n\to\infty$, if $c>0$ is sufficiently large.
\end{proof}

Combining Lemmas~\ref{detDescLem} and~\ref{probDescChainLem} the proof of Theorem~\ref{intMedProp} is now immediate.
\begin{proof}[Proof of Theorem~\ref{intMedProp}]
Lemma~\ref{detDescLem} shows that if $x,y\in X^{(n)}$ are connected by an edge in $G(X^{(n)})$ and the chemical distance between $x$ and $y$ in $G^\p(X^{(n)})$ is at least $c_1\log n$, then $X^{(n)}$ contains a toroidal descending chain consisting of at least $c_1\log n$ elements. Lemma~\ref{probDescChainLem} shows that the complements of the latter events occur whp.
\end{proof}

\label{lastpage}
\subsection*{Acknowledgments}
The author thanks I.~Norros for introducing him to the papers~\cite{aby1,koski}, which marked the starting point for the present research. Moreover, the author thanks R.~van der Hofstad for pointing out the continuum variant of the scale-free percolation model,~\cite{wuth1,wuth2}.


\begin{thebibliography}{10}

\bibitem{aldLin}
D.~J. Aldous.
\newblock Which connected spatial networks on random points have linear
  route-lengths?
\newblock {\em Arxiv preprint arXiv:0911.5296}, 2009.

\bibitem{aby1}
S.~Appleby.
\newblock Estimating the cost of a telecommunications network using the fractal
  structure of the human population distribution.
\newblock {\em IEE Proceedings-Communications}, 142:172--178, 1995.

\bibitem{aby2}
S.~Appleby.
\newblock Multifractal characterization of the distribution pattern of the
  human population.
\newblock {\em Geographical Analysis}, 28:147--160, 1996.

\bibitem{baccelliZuyev}
F.~Baccelli and S.~Zuyev.
\newblock {P}oisson-{V}oronoi spanning trees with applications to the
  optimization of communication networks.
\newblock {\em Operations Research}, 47:619--631, 1999.

\bibitem{lrp1}
M.~Biskup.
\newblock On the scaling of the chemical distance in long-range percolation
  models.
\newblock {\em The Annals of Probability}, 32:2938--2977, 2004.

\bibitem{lrp2}
M.~Biskup.
\newblock Graph diameter in long-range percolation.
\newblock {\em Random Structures \& Algorithms}, 39:210--227, 2011.

\bibitem{copper}
D.~Coppersmith, D.~Gamarnik, and M.~Sviridenko.
\newblock The diameter of a long-range percolation graph.
\newblock {\em Random Structures \& Algorithms}, 21:1--13, 2002.

\bibitem{lilypond}
D.~J. Daley and G.~Last.
\newblock Descending chains, the lilypond model, and mutual-nearest-neighbour
  matching.
\newblock {\em Advances in Applied Probability}, 37:604--628, 2005.

\bibitem{sfPerc}
M.~Deijfen, R.~van~der Hofstad, and G.~Hooghiemstra.
\newblock Scale-free percolation.
\newblock {\em Annales de l'Institut Henri Poincar\'e Probabilit\'es et
  Statistiques}, 49:817--838, 2013.

\bibitem{wuth1}
P.~Deprez and M.~V.~W\"uthrich.
\newblock  Networks, random graphs and percolation.
\newblock {\em Arxiv preprint arXiv:1409.2339}, 2014.

\bibitem{wuth2}
P.~Deprez and M.~V.~W\"uthrich.
\newblock Poisson heterogeneous random-connection model.
\newblock {\em Arxiv preprint arXiv:1312.1948}, 2013.

\bibitem{falcGrim}
K.~J. Falconer and G.~R. Grimmett.
\newblock On the geometry of random {C}antor sets and fractal percolation.
\newblock {\em Journal of Theoretical Probability}, 5:465--485, 1992.

\bibitem{harBra}
T.~E. Harris.
\newblock {\em The Theory of Branching Processes}.
\newblock Springer, Berlin, 1963.\bibitem{fock}
G.~Last and M.~D. Penrose.
\newblock Poisson process {F}ock space representation, chaos expansion and
  covariance inequalities.
\newblock {\em Probability Theory and Related Fields}, 150:663--690, 2011.

\bibitem{saw}
G.~F. Lawler.
\newblock A self-avoiding random walk.
\newblock {\em Duke Mathematical Journal}, 47:655--693, 1980.

\bibitem{domProd}
T.~M. Liggett, R.~H. Schonmann, and A.~M. Stacey.
\newblock Domination by product measures.
\newblock {\em Annals of Probability}, 25:71--95, 1997.

\bibitem{koski}
P.~Mannersalo, A.~Koski, and I.~Norros.
\newblock Telecommunication networks and multifractal analysis of human
  population distribution.
\newblock {\em Technical Report 02, COST-257}, pages 1--15, 1998.


\bibitem{mensh}
M.~V. Menshikov, S.~Yu. Popov, and M.~Vachkovskaia.
\newblock On the connectivity properties of the complementary set in fractal percolation models.
\newblock {\em Probability Theory and Related Fields}, 119:176--186, 2001.


\bibitem{penrose}
M.~D. Penrose.
\newblock {\em Random {G}eometric {G}raphs}.
\newblock Oxford University Press, Oxford, 2003.

\bibitem{aggTess}
K.~Tchoumatchenko and S.~Zuyev.
\newblock Aggregate and fractal tessellations.
\newblock {\em Probability Theory and Related Fields}, 121:198--218, 2001.

\bibitem{ultraSmall}
J.~E. Yukich.
\newblock Ultra-small scale-free geometric networks.
\newblock {\em Journal of Applied Probability}, 43:665--677, 2006.

\end{thebibliography}
\end{document}